\documentclass[11pt]{article}

\RequirePackage[l2tabu, orthodox]{nag}
\usepackage[T1]{fontenc}
\usepackage[utf8]{inputenc}
\usepackage{lmodern}
\usepackage[english]{babel}
\usepackage{color} 
\usepackage[usenames,dvipsnames]{xcolor}
\usepackage{amsrefs,amsthm,amssymb,amsmath}
\usepackage[pdftex]{graphicx}
\usepackage{enumerate}
\usepackage{url}


\def\comment#1{}

\newcommand{\dfcn}[5]{\setlength{\arraycolsep}{1.5pt}\begin{array}{cccc}#1:&#2&\to&#3\\&#4&\mapsto&#5\end{array}}

\newcommand{\R}{\mathbb{R}}
\newcommand{\N}{\mathbb{N}}
\newcommand{\Z}{\mathbb{Z}}
\newcommand{\Q}{\mathbb{Q}}

\newcommand{\T}{\mathbb{S}^1}
\newcommand{\Diff}{\mathsf{Diff}}
\newcommand{\Homeo}{\mathsf{Homeo}}
\newcommand{\homeo}{\mathsf{Homeo}}

\newcommand{\G}{\Gamma}

\newcommand{\PSL}{\mathsf{PSL}}

\newcommand{\SO}{\mathsf{SO}}
\newcommand{\BP}{\mathsf{BP}}

\newcommand{\cH}{\mathcal{H}}
\newcommand{\cG}{\mathcal{G}}

\newcommand{\Xbb}{\mathbb{X}}

\newcommand{\PL}{\mathsf{PL}_+}
\newcommand{\PP}{\mathsf{PP}_+}

\newcommand{\pa}{\mathsf{PDiff}^r(\T)}
\newcommand{\dg}{\mathsf{Diff}^r}

\newcommand{\Isom}{\mathsf{Isom}}

\newcommand{\fw}{\operatorname{FW}}

\newtheorem{thm}{Theorem}[section]
\newtheorem*{thm*}{Theorem}
\newtheorem{claim}{Claim}

\newtheorem{lem}[thm]{Lemma}
\newtheorem{prop}[thm]{Proposition}
\newtheorem{cor}[thm]{Corollary}
\theoremstyle{definition}
\newtheorem{dfn}[thm]{Definition}
\newtheorem{ques}[thm]{Question}

\theoremstyle{remark}
\newtheorem{rem}[thm]{Remark}

\usepackage[margin=2.4cm]{geometry}

\setlength{\parskip}{0.3ex plus 0.2ex minus 0.2ex}
\usepackage{indentfirst}
\usepackage{microtype}

\usepackage[colorlinks=true,linkcolor=blue,citecolor=magenta]{hyperref}

\title{Property FW, differentiable structures, and smoothability of singular actions}
\date{}
\author{Yash Lodha, Nicol\'as Matte Bon and Michele Triestino\thanks{Y.L.~was supported by an SNSF \emph{Ambizione} grant. M.T.~was partially supported by PEPS -- Jeunes Chercheur-e-s -- 2017 (CNRS) and Projet ``Jeunes G\'eom\'etres'' of F.~Labourie (financed by the Louis D.~Foundation).}}
\begin{document}
\maketitle

\begin{abstract}
We provide a smoothening criterion for group actions on manifolds by singular diffeomorphisms. We prove that if a countable group $\Gamma$ has the fixed point property FW for walls (e.g.~if it has property $(T)$), every aperiodic action of $\Gamma$  by diffeomorphisms that are of class $C^r$ with countably many singularities  is  conjugate to an action by true diffeomorphisms of class $C^r$ on a homeomorphic (possibly non-diffeomorphic) manifold. As applications, we show that Navas's  result for actions of Kazhdan groups on the circle, as well as  the recent solutions to Zimmer's conjecture, generalise to aperiodic actions by  diffeomorphisms with countably many singularities.  \footnote{\textbf{MSC\textup{2020}:} Primary 22D55, 37C85, 57M60. Secondary 37E10.
\newline
\textbf{Key-words:} Group actions, Groupoid of germs, Property $(T)$, Property FW, Geometric structures, Piecewise linear homeomorphisms, Zimmer's program, Singularities.
}

\end{abstract}


\section{Introduction}

Let $\G$ be a countable group acting by homeomorphisms on a closed differentiable manifold $M$. 
Assume that every $g\in \G$ acts as a diffeomorphism of regularity $C^r$ in restriction to some open set $U_g$ whose complement is countable. We say that $\Gamma$ acts by \emph{countably singular} $C^r$-diffeomorphisms.  For any real $r\in[1,\infty]$ we will denote by $\dg(M)$  the group of $C^r$ diffeomorphisms of a  manifold~$M$, with the convention that when $r$ is not integer, this means functions whose $\lfloor r\rfloor$-th derivative exists and is $(r-\lfloor r\rfloor)$-H\"older continuous. We further let $\Omega \mathsf{Diff}^r(M)$ be the  group of {countably singular} $C^r$-diffeomorphisms. 

We are interested in the following general problem: when is  a singular action $\Gamma \to \Omega \Diff^r(M)$ \emph{smoothable},  i.e.\ topologically conjugate to an action by honest $C^r$-diffeomorphisms? 
 
 In general this need not be the case, and the presence of singularities (even finitely many) can affect substantially the dynamics of an action. To illustrate the matter, consider an action of $\mathbb{Z}$ on $\T$ that is generated by a homeomorphism $f$ which is a $C^{\infty}$-diffeomorphism on the complement of a finite set of points and has no finite orbit. By Denjoy's theorem, if $f$ was conjugate to a $C^2$-diffeomorphism then it would actually be conjugate to a rotation. However, this is not always the case: the first such examples were described by Hall~\cite{Hall}. (Nevertheless, a  result of Yoccoz \cite{yoccoz} shows that Denjoy's theorem holds for real-analytic homeomorphisms whose derivative may vanish at a finite set of points). Several recent papers are devoted to the study of smoothabilty of actions of more general groups on one-manifolds. Well-studied classes of groups of singular diffeomorphisms of the circle include subgroups of the groups  $\PL(\T)$ of \emph{piecewise linear} homeomorphisms of the circle, and of the group $\PP(\T)$ of \emph{piecewise projective} ones.   In this setting, an early result of   Ghys and Sergiescu shows that \emph{Thompson's group} $T$ on the circle is $C^\infty$-smoothable \cite{GS}. In contrast, there exists groups of piecewise linear and projective homeomorphisms that fail to be smoothable, and are even not isomorphic to groups of $C^1$-diffeomorphisms  \cite{BLT, LodhaSimple}. In a different direction, a recent breakthrough  of Kim and Koberda provided examples of finitely generated groups $\Gamma$ of circle diffeomorphisms of every given critical regularity, which is defined as the supremum over $r$ such that $\Gamma$ can be embedded in $\Diff^r(\T)$ \cite{KK}. {In fact, the groups produced by their construction all arise as subgroups of $\Omega \Diff^\infty(\T)$,    providing another instance of how  singular actions can give rise to wild phenomena.}

 The purpose of this paper is to show that the situation is very different provided that the acting group $\Gamma$ has  \emph{Kazhdan's property}  $(T)$ (in fact  the weaker \emph{property} $\fw$, whose definition is recalled below).  In this case,  we obtain a smoothability criterion of broad generality, that holds for actions on closed manifolds of arbitrary dimension.

 
 Recall that a countable group $\Gamma$ is a \emph{Kazhdan group} (or has \emph{Kazhdan's property} $(T)$) if every isometric affine action of $\Gamma$ on a Hilbert space has a fixed point.
 This was not Kazhdan's original definition, but it is equivalent thanks to 
Delorme--Guichardet theorem. (See \cite{kazhdan} as a general reference on property $(T)$.)  
 To recall the  definition of property $\fw$, let~$\G$ be a countable group acting on a set $X$. A subset $A\subset X$ is said to be \emph{commensurated} if  the symmetric difference $g(A) \triangle A$ is finite for every element $g\in \G$. A subset $A\subset X$ is said to be \emph{transfixed} if there exists a $\G$-invariant subset $B\subset X$ such that $A\triangle B$ is finite. A group $\G$ has \emph{property $\fw$} if for every $\G$-action, every commensurated subset is transfixed. Property $\fw$ is named after Barnhill--Chatterji \cite{BC} and has been extensively studied by  Cornulier \cite{Cor-FM,Cor-FWsurvey}  (see also \cite{CanCor} for a recent application to birational dynamics). It is equivalent to a fixed point property for actions on $\operatorname{CAT}(0)$ cube complexes, and admits various other equivalent definitions, for which we refer  to~\cite{Cor-FWsurvey}. Property $(T)$ implies property $\fw$ (see \cite[p.\ 3]{Cor-FWsurvey}).
 

It can happen that  two manifolds $M$ and $N$ are homeomorphic through a  countably singular diffeomorphism, yet they are not diffeomorphic. This goes back to Milnor's discovery of  manifolds homeomorphic to the $7$-sphere, but not diffeomorphic to it  \cite{Mil-7} (manifolds  homeomorphic to spheres are now known as \emph{exotic spheres}). In fact, every exotic sphere  of dimension $d \ge 7$ admits a singular diffeomorphism to $\mathbb{S}^d$ with just one singular point. On the other hand if $M$ and $N$ are as above, then the groups 
$\Omega \mathsf{Diff}^r(M)$ and $\Omega \mathsf{Diff}^r(N)$ are isomorphic.  For this reason, the existence of exotic differentiable  structures should be taken into account in the context studied in this paper.

We show the following result.
%
%
%

\begin{thm} \label{t-countable}
	Let $M$ be a closed differentiable manifold, and fix $r\in[1,\infty]$. Let $\Gamma$ be a finitely generated group with property $\fw$. 
For every homomorphism $\rho\colon \Gamma \to \Omega \mathsf{Diff}^r(M)$,  one of the following holds:
\begin{enumerate}[(i)]
\item \label{i-finite-orbit} the action of $\rho(\Gamma)$ on $M$ has a  finite orbit;
\item \label{i-smooth} there exist a closed differentiable manifold $N$ and a countably singular diffeomorphism $\varphi \colon M \to N$ such that $\varphi\rho(\Gamma)\varphi^{-1}\subset \mathsf{Diff}^r(N) $. \end{enumerate}
\end{thm}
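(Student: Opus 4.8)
The plan is to leave the underlying topological manifold fixed and to modify its $C^r$-structure only at the (countably many) singular points, in a $\rho(\Gamma)$-equivariant way. First I would pin down the singular set: for $g\in\Omega\mathsf{Diff}^r(M)$ let $\mathrm{Sing}(g)\subset M$ be the complement of the largest open set on which $g$ is a $C^r$-diffeomorphism onto its image (a countable set), and put $\Sigma=\bigcup_{\gamma\in\Gamma}\mathrm{Sing}(\rho(\gamma))$, a countable subset of $M$. I would check that $\Sigma$ is $\rho(\Gamma)$-invariant: if every element of $\rho(\Gamma)$ is a local $C^r$-diffeomorphism at $x$, then for $\gamma,\delta\in\Gamma$ writing $\rho(\delta)=\rho(\delta\gamma)\circ\rho(\gamma)^{-1}$ near $\rho(\gamma)(x)$ exhibits $\rho(\delta)$ as a composition of local $C^r$-diffeomorphisms there, so $\rho(\gamma)(x)\notin\Sigma$ as well. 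Hence $\rho(\Gamma)$ acts on the open manifold $M\setminus\Sigma$ by honest $C^r$-diffeomorphisms. If the action has a finite orbit we are in case~\eqref{i-finite-orbit}, so from now on I assume every orbit is infinite, and in particular that every $\rho(\Gamma)$-orbit inside $\Sigma$ is infinite.

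Next I would reduce the problem to producing a $C^r$-atlas $\mathcal A'$ on the topological manifold $M$ such that (a) at every point of $M\setminus\Sigma$ the germ of $\mathcal A'$ agrees with that of the given structure, and (b) every $\rho(\gamma)$ is a $C^r$-diffeomorphism of $(M,\mathcal A')$. Indeed, then $N:=(M,\mathcal A')$ is a closed differentiable manifold, the identity $\varphi\colon M\to N$ is $C^r$ off the countable set $\Sigma$, hence a countably singular diffeomorphism, and $\varphi\rho(\Gamma)\varphi^{-1}\subset\mathsf{Diff}^r(N)$, which is case~\eqref{i-smooth} (and $N$ may well fail to be diffeomorphic to $M$, exactly in the spirit of the exotic-sphere phenomenon recalled above). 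To build $\mathcal A'$ I would work one $\Gamma$-orbit of $\Sigma$ at a time: for a representative $x_O$ of each orbit $O\subset\Sigma$, choose a germ of homeomorphism $h_{x_O}\colon(M,x_O)\to(\R^d,0)$, $d=\dim M$, which (i) conjugates the germs at $x_O$ of the stabiliser $\mathsf{Stab}_{\rho(\Gamma)}(x_O)$ into germs of $C^r$-diffeomorphisms at $0$, and (ii) away from $x_O$ is a $C^r$-chart for the structure being assembled; transporting $h_{x_O}$ along $O$ by the group elements then yields a coherent, $\rho(\Gamma)$-equivariant family of charts extending the old atlas across $\Sigma$.

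The step that genuinely uses the hypothesis, and which I expect to be the main obstacle, is the existence of the equivariant choice above. The natural approach is to see its obstruction as combinatorial/cohomological: along an orbit $O$ the germs of the $\rho(\gamma)$, taken modulo $C^r$-germs, should assemble into a cocycle of $\Gamma$ with values in a coefficient space of ``local $C^r$-structures along $O$'', equivalently into an action of $\Gamma$ on a $\operatorname{CAT}(0)$ cube complex arising from a commensurated subset, whose invariant vertices correspond precisely to $\rho(\Gamma)$-equivariant smoothings over $O$. Property $\fw$ — implied by $(T)$, as recalled, but all that is needed — then furnishes a fixed point, i.e.\ the desired equivariant family of local structures. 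The work here is to construct the cube complex (or coefficient module) so that honest differentiability corresponds to an invariant vertex, to verify that the action on it is well defined and that the cocycle truly records the failure of smoothness, and finally to invoke the fixed-point property; without $\fw$ the germs can be ``knotted'' (in the piecewise-linear model, the break-ratio data of the stabiliser need not lie on a line through the origin) and no equivariant smoothing exists.

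Finally I would assemble and verify: the charts produced above must be shown pairwise $C^r$-compatible and compatible with the original charts off $\Sigma$ — the delicate point being the transition maps between two new charts whose domains overlap, which can occur because $\Sigma$ need not be discrete, and which is to be controlled by the equivariance together with the defining property of the chosen germs — so that they form a $C^r$-atlas $\mathcal A'$ satisfying (a). Then each $\rho(\gamma)$ is $\mathcal A'$-smooth: off $\Sigma$ by the first step, and at points of $\Sigma$ by the invariance built in via property $\fw$. Taking $N=(M,\mathcal A')$ and $\varphi=\mathrm{id}$ gives case~\eqref{i-smooth}, completing the dichotomy.
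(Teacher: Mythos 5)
Your high-level strategy --- encode the failure of smoothness at the singular points by a commensurating action (equivalently an action on a $\operatorname{CAT}(0)$ cube complex), use property $\fw$ to extract an equivariant family of local resolutions, and accept that the resulting structure on the underlying topological manifold may be exotic --- is essentially the paper's strategy. But there is a genuine gap at the point where you invoke $\fw$: the commensurated set you would need over the whole singular set $\Sigma$ is not commensurated. The natural set to commensurate is the ``trivial section'' $A=\{1_x\mathcal{H}\colon x\}$ inside the space of cosets of singular germs modulo honest $C^r$ germs $\mathcal{H}$; an element $g$ moves $1_x\mathcal{H}$ off $A$ exactly at its singular points, so $g(A)\triangle A$ is finite only when each $g$ has \emph{finitely many} singularities. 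For $g\in\Omega\mathsf{Diff}^r(M)$ the singular set is countably infinite in general, the symmetric difference is infinite, and $\fw$ gives you nothing directly. The paper's fix is a transfinite induction on the Cantor--Bendixson rank of the (countable, compact) singular set: one filters the germs by groupoids $\mathcal{H}_\alpha$ so that at each successor stage only the finitely many singular points of top rank are in play, applies the $\fw$/commensuration argument to that singular pair, conjugates those singularities away, and descends. This induction is not an optional refinement: the ``delicate point'' you flag --- compatibility of the new charts near accumulation points of $\Sigma$ --- genuinely fails without it, because a new chart centred at an accumulation point $x$ that is $C^r$-compatible with the \emph{old} structure away from $x$ is then automatically incompatible with the \emph{new} charts at the singular points accumulating on $x$, and no shrinking of domains escapes this.

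Two smaller remarks. First, once the commensuration argument hands you an equivariant family of cosets $\gamma_x\mathcal{H}$, the relation $\gamma_{g(x)}\mathcal{H}=[g]_x\gamma_x\mathcal{H}$ applied with $g(x)=x$ already conjugates the stabiliser germs into $C^r$; you do not need to arrange that separately, and treating it as a preliminary choice (before the fixed-point argument that is supposed to produce it) puts the steps in the wrong order. Second, realising the chosen germs coherently is where the differential topology enters: the paper must extend finitely many singular germs to a single global homeomorphism without new singularities, and the obstruction lives in $\pi_0(\mathsf{Diff}_+(\mathbb{S}^{d-1}))/\pi_0(\mathsf{Diff}_+(\mathbb{D}^d))$, forcing a connected sum with exotic spheres (carried out by a cut-and-paste lemma over a disjoint union of all closed $d$-manifolds). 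Your atlas formulation absorbs this into ``choose a germ of homeomorphism $(M,x_O)\to(\mathbb{R}^d,0)$'', which is fine for an isolated singular point, but the surgery step should be made explicit if you want the quantitative conclusion that $N$ is a connected sum of $M$ with exotic spheres, and hence that $N=M$ when $\mathbb{S}^d$ carries a unique differentiable structure.
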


This result is a special case of Theorem~\ref{t-main} below (dealing more generally with singular actions on manifolds endowed with a geometric structure).

\begin{rem} In the second case, the manifold $N$ resulting from the proof is a connected sum of  $M$  with finitely many exotic spheres. In particular, it is homeomorphic to $M$, but not necessarily diffeomorphic. If the dimension $d$ of $M$ is such that the $d$-sphere admits a unique differentiable structure, then it is possible to choose $N=M$ (see Theorem \ref{t-main}). It is known that the differentiable structure on $\mathbb{S}^d$ is unique  for $d\in \{1,2,3,5,6,12,56,61\}$ \cites{Ker-Mil, Wang-Xu, Hill-Hop-Rav}, and never unique for odd  $d\notin \{ 1,3,5,61\}$  \cites{Wang-Xu, Hill-Hop-Rav}, while a general answer is still unknown in even dimension (and is a wide open problem for $d=4$). 
\end{rem}
\begin{rem}
In the particular case of representations of $\Gamma$ into groups of singular diffeomorphisms with \emph{finitely many} singularities, we obtain a more precise statement in the finite-orbit case \eqref{i-finite-orbit}, namely that the action is conjugate to an action that is of class $C^r$ away from a finite invariant subset (Theorem~\ref{t-finitelymany}).
\end{rem}

\medskip

Whenever the group $\Gamma$ has property $\fw$,  Theorem \ref{t-countable}  implies that any result that is valid for a class of regular actions must imply strong restrictions on  a class of singular actions as well.  As a first application of this,  let us  consider again the case $M=\T$. 
Andr\'es Navas proved the following theorem.

\begin{thm}[Navas]
 	\label{t:navas}
 	If $\Gamma$ is a countable Kazhdan group, every homomorphism $\rho\colon \Gamma \to \mathsf{Diff}^{3/2}(\T)$ has finite image.\footnote{More precisely, this result is proven in  \cite{navas(T)} for $C^r$-actions with  $r>3/2$, but can be extended to $C^{3/2}$-actions according to \cite{navas-cocycles}, or also \cite[Remark 5.2.24]{navas-book},  relying on work by Bader--Furman--Gelander--Monod \cite{BFGM}, by means of an $L^p$-analogue of property  introduced by Fisher--Margulis.}
 \end{thm}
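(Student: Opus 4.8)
The plan is to follow Navas's cohomological strategy: manufacture from the $C^{3/2}$‑regularity of the dynamics an affine isometric action of $\Gamma$ on a Hilbert space, and then annihilate it with Kazhdan's property $(T)$. As a first reduction, a Kazhdan group is finitely generated, and replacing $\Gamma$ by the preimage of the orientation‑preserving subgroup $\Diff^{3/2}_+(\T)$ (of index at most $2$, still Kazhdan) does not affect the conclusion, so I may assume $\Gamma$ is finitely generated and $\rho(\Gamma)\subset\Diff^{3/2}_+(\T)$.

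Next I would construct the cocycle. Let $\Delta\subset\T\times\T$ be the diagonal and let $\mu_0=\dfrac{dx\,dy}{4\sin^2(\pi(x-y))}$ be the $\PSL(2,\R)$‑invariant ("Liouville") measure on the space of geodesics $(\T\times\T)\setminus\Delta$ of the hyperbolic plane, with $\partial\mathbb H^2$ identified with $\T$; this measure is \emph{infinite}, blowing up like $|x-y|^{-2}$ along $\Delta$. On $\mathcal H=L^2\big((\T\times\T)\setminus\Delta,\mu_0\big)$ one has the unitary ("Koopman") representation $\pi(g)F=(F\circ g^{-1})\cdot J_g^{1/2}$ of $\Diff^{3/2}_+(\T)$, where $J_g$ is the Jacobian cocycle of $\mu_0$ along the diagonal action of $g$ (chosen so $\pi$ is unitary), and the vector $b(g):=\pi(g)\mathbf 1-\mathbf 1=J_g^{1/2}-1$ satisfies the cocycle identity $b(gh)=b(g)+\pi(g)b(h)$. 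Because $\mu_0$ is infinite, $\mathbf 1\notin\mathcal H$, so $b$ is \emph{not} automatically a coboundary — and this is exactly what makes the construction useful. The crucial estimate, and the sole point where the hypothesis $r=3/2$ enters, is that $b(g)$ nonetheless lies in $\mathcal H$ for every $g$: since $g'$ is $(r{-}1)$‑Hölder, $J_g=1+O(|x-y|^{r-1})$ near $\Delta$, whence $\|b(g)\|_{\mathcal H}^2\lesssim\iint|x-y|^{2(r-1)}\,|x-y|^{-2}\,dx\,dy$, which converges precisely when $r>3/2$. (Equivalently this is the borderline Hölder–Sobolev embedding $C^{\,r-1}(\T)\hookrightarrow H^{1/2}(\T)$, valid for $r-1>\tfrac12$; it also explains why nothing can be said about $C^1$‑actions by this route. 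The endpoint $r=3/2$ itself is strictly harder — the $L^2$‑cocycle just fails to be defined — and, as the footnote indicates, one must instead run the argument with the analogous $L^p$‑cocycle and the $L^p$‑reinforcement of property $(T)$ of Bader–Furman–Gelander–Monod and Fisher–Margulis.)

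Having checked that $b$ is a continuous cocycle, the formula $\xi\mapsto\pi(\rho(g))\xi+b(\rho(g))$ is an affine isometric action of $\Gamma$ on $\mathcal H$; property $(T)$ — in fact already $\fw$, which it implies — forces a fixed point, i.e.\ $b\circ\rho$ is a coboundary, $b(\rho(g))=\xi_0-\pi(\rho(g))\xi_0$ for some $\xi_0\in\mathcal H$. Then $F_0:=\mathbf 1+\xi_0$ is $\pi$‑invariant, so $F_0^2\,\mu_0$ is a $\rho(\Gamma)$‑invariant measure on $(\T\times\T)\setminus\Delta$; since $\xi_0$ is $\mathcal H$‑small near $\Delta$ (where $\mu_0$ blows up), this measure is asymptotic to $\mu_0$ along the diagonal, and a renormalised limit along $\Delta$ produces a $\rho(\Gamma)$‑invariant Borel probability measure $\nu$ on $\T$ that is absolutely continuous with respect to Lebesgue.

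For the endgame, in the minimal case $\nu$ has full support, so conjugating by the homeomorphism $x\mapsto\nu([0,x])$ sends $\nu$ to Lebesgue measure and turns $\rho(\Gamma)$ into a group of orientation‑preserving, Lebesgue‑preserving homeomorphisms of $\T$; but such an $h$ satisfies $h(x)-h(0)\equiv x\pmod 1$, i.e.\ is a rotation. Hence $\rho(\Gamma)$ is conjugate into $\Isom_+(\T)$, in particular abelian, and being a finitely generated amenable quotient of a Kazhdan group it is finite. The non‑minimal cases are disposed of by standard arguments — an exceptional minimal set is incompatible with an absolutely continuous invariant measure (collapsing the complementary intervals would yield a minimal action of $\Gamma$ by a finite group of rotations), and in the presence of a finite orbit one passes to the complementary intervals and invokes Thurston stability (a nontrivial germ at a fixed point would yield a surjection onto $\R$, contradicting property $(T)$), again reducing to triviality. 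The main obstacle in this scheme is the construction and control of the cocycle: pinning down the Hilbert space so that $\pi$ is genuinely unitary while the constant‑function coboundary remains nontrivial (which is exactly what the infinite Liouville measure achieves), and proving the sharp membership $b(g)\in\mathcal H$, which is what carries the hypothesis $r>3/2$ — together with extracting an honest invariant measure on $\T$ from the abstract fixed point. The passage to the endpoint $r=3/2$ is a separate, heavier input.
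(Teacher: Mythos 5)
The paper does not actually prove this statement: Theorem~\ref{t:navas} is imported from Navas's article, with the endpoint case $r=3/2$ delegated to the $L^p$-analogue of property $(T)$, as the footnote explains. Your reconstruction gets the analytic half of Navas's argument right: the infinite Liouville measure $\mu_0$ on $(\T\times\T)\setminus\Delta$, the unitary twist by $J_g^{1/2}$, the formal coboundary $b(g)=J_g^{1/2}-\mathbf 1$, and the computation locating the threshold $\|b(g)\|^2\lesssim\iint|x-y|^{2(r-1)-2}\,dx\,dy<\infty\iff r>3/2$ (with the endpoint needing the $L^p$ machinery) all match the cited proof. One parenthetical, however, is flatly wrong and contradicts the paper itself: property $\fw$ does \emph{not} force a fixed point for an affine isometric action on a Hilbert space --- that is property FH, equivalent to $(T)$, and $\fw$ is strictly weaker. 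Remark~\ref{r:sqrt2} exhibits $\PSL(2,\Z[\sqrt{2}])$, an infinite $\fw$-group acting on $\T$ by M\"obius transformations (which are analytic diffeomorphisms), so Navas's conclusion cannot follow from $\fw$ alone.

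The genuine gap is in your endgame: the claim that the fixed point $\xi_0$ yields, via ``a renormalised limit along $\Delta$'' of the invariant measure $F_0^2\mu_0$, a $\rho(\Gamma)$-invariant absolutely continuous probability measure on $\T$. This implication is false. The truncation to an annulus $\{\varepsilon<|x-y|<\delta\}$ is not equivariant, so invariance of $F_0^2\mu_0$ does not pass to the renormalised projections; and since $F_0^2\mu_0$ is $L^2$-asymptotic to $\mu_0$, that renormalised diagonal limit is \emph{always} Lebesgue measure, which is of course not invariant in general. Concretely: for any non-elementary Fuchsian group such as $\PSL(2,\Z)$ one has $J_g\equiv 1$ (Liouville is $\PSL(2,\R)$-invariant), so $b\equiv 0$ is a coboundary with $\xi_0=0$ and $\mu_0$ itself is the invariant measure on the space of geodesics --- yet such a group preserves no probability measure on $\T$ at all, let alone an absolutely continuous one. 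This is exactly the point where Navas's proof requires real dynamical input (the finite-orbit/minimal/exceptional-minimal trichotomy, Sacksteder-type contraction, and the incompatibility of contraction--expansion with the finiteness of the invariant measure on products of disjoint intervals), not a soft limiting argument. Your remaining steps (Lebesgue-preserving circle homeomorphisms are rotations; Thurston stability at a finite orbit) are fine but rest on this unjustified extraction.
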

(For what concerns actions of lattices in higher rank simple Lie groups, earlier and more precise results were obtained by Ghys, Burger--Monod, and Witte Morris \cite{ghys,burger-monod,witte}.) 
 
  It remains a wide open problem whether  there exists an infinite Kazhdan group of circle {homeomorphisms}. A  special case of this problem  is to determine whether such a group can exists among groups of singular diffeomorphisms. In fact, this problem was open  even for groups of {piecewise linear} and {piecewise projective} homeomorphisms, a case suggested by Navas (see \cite[Remark~5.2.24]{navas-book}, \cite[Question 4.8]{BLT}, and \cite[\S 1]{navas-ICM}). 
  More generally, let us denote by $\pa$ the group of all \emph{piecewise differentiable homeomorphisms} of the circle of class $C^r$, i.e.~the subgroup of $\Omega \mathsf{Diff}^r(M)$ consisting of elements for which the singular set is finite, and moreover at every singular point the right and left derivatives exist up to order $r$.
The combination of Theorem~\ref{t-countable} with Navas's theorem implies the following.
\begin{cor}\label{c-T}
 Let $\Gamma$ be a countable Kazhdan group. Then the following hold.
 \begin{enumerate}[(i)]
 \item  \label{i-T} Every homomorphism $\rho\colon \Gamma\to \mathsf{PDiff}^{3/2}(\T)$ has finite image.

\label{c-T-countable}
\item \label{i-T-countable}
 For every homomorphism $ \rho \colon \Gamma \to \Omega\mathsf{Diff}^{3/2}(\T)$, the action of $\rho(\Gamma)$ on $\T$ has a finite orbit.  
\end{enumerate}
\end{cor}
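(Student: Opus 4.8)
The plan is to apply Theorem~\ref{t-countable} and then invoke Navas' theorem (Theorem~\ref{t:navas}) on the smoothened action. Start with part \eqref{i-T-countable}, the general one: given $\rho\colon \Gamma \to \Omega\mathsf{Diff}^{3/2}(\T)$, Theorem~\ref{t-countable} (valid since a Kazhdan group has property $\fw$) says that either $\rho(\Gamma)$ has a finite orbit on $\T$, in which case we are done, or there exist a closed $1$-manifold $N$ and a countably singular diffeomorphism $\varphi\colon \T \to N$ with $\varphi\rho(\Gamma)\varphi^{-1}\subset \mathsf{Diff}^{3/2}(N)$. A closed connected $1$-manifold is diffeomorphic to $\T$, and since $\T$ has a unique differentiable structure (the $d=1$ case of the uniqueness list in the first remark), we may in fact take $N=\T$ — or simply observe directly that $\mathsf{Diff}^{3/2}(N)\cong \mathsf{Diff}^{3/2}(\T)$. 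Thus $\varphi\rho(\Gamma)\varphi^{-1}$ is a Kazhdan subgroup of $\mathsf{Diff}^{3/2}(\T)$, so by Theorem~\ref{t:navas} it is finite; being a finite group acting on the circle, it has a finite orbit (e.g.\ a periodic point, since a finite group of homeomorphisms of $\T$ is topologically conjugate to a group of rotations, or more simply: any orbit of a finite group is finite). Conjugating back by $\varphi^{-1}$, the action of $\rho(\Gamma)$ on $\T$ has a finite orbit, which proves \eqref{i-T-countable}.

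For part \eqref{i-T}, let $\rho\colon \Gamma \to \mathsf{PDiff}^{3/2}(\T)$. Since $\mathsf{PDiff}^{3/2}(\T)\subset \Omega\mathsf{Diff}^{3/2}(\T)$, part \eqref{i-T-countable} already applies, but we want the stronger conclusion that $\rho(\Gamma)$ is finite, not merely that it has a finite orbit. Here I would invoke the more precise statement for finitely-singular actions mentioned in the remark (Theorem~\ref{t-finitelymany}): in the finite-orbit case, the action is conjugate to one that is of class $C^{3/2}$ away from a finite invariant set $F$. Cutting $\T$ along $F$ and collapsing, or passing to the induced action, one reduces to the case where $F$ is actually empty — more directly, a finite-index subgroup $\Gamma_0\le\Gamma$ (the kernel of the action on $F$, which is finite since $F$ is finite) fixes each point of $F$ and hence acts on each complementary arc; after the conjugacy this action is $C^{3/2}$ on the closed arc, hence extends to a genuine $C^{3/2}$ action of $\Gamma_0$ on $\T$ (doubling the arc, or using that a single arc already gives a faithful picture). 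Applying Navas' theorem to $\Gamma_0$ forces $\rho(\Gamma_0)$ to be finite, and then $\rho(\Gamma)$ is finite as an extension of a finite group by a finite group.

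The main obstacle is the bookkeeping in part \eqref{i-T}: Navas' theorem is a statement about honest $C^{3/2}$ actions on the \emph{circle}, whereas Theorem~\ref{t-finitelymany} only gives smoothness on the complement of a finite invariant set $F$. One must argue that this genuinely reduces to Navas' setting — the cleanest route is to restrict to the finite-index subgroup fixing $F$ pointwise, note that after the smoothening conjugacy this subgroup acts by $C^{3/2}$ diffeomorphisms of each closed complementary interval (with matching one-sided derivatives at the endpoints, which is exactly what being $C^{3/2}$ across $F$ would require, but here we only need smoothness on each closed arc separately), and then either apply the interval version of Navas' theorem or glue two copies of an arc into a circle to produce a bona fide $C^{3/2}$ circle action to which Theorem~\ref{t:navas} applies directly. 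Part \eqref{i-T-countable} is comparatively immediate once Theorem~\ref{t-countable} is granted, the only subtlety being the (harmless) passage through a possibly exotic $N$, which in dimension one is vacuous.
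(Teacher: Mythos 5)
Part \eqref{i-T-countable} of your argument is correct and is essentially the paper's (implicit) proof: Theorem~\ref{t-countable} plus Navas' theorem, with the observation that in dimension one the target manifold $N$ is just a circle. For part \eqref{i-T}, your treatment of the no-finite-orbit case also agrees with the paper, but in the finite-orbit case you diverge: the paper does \emph{not} double arcs and reapply Navas. Instead (Proposition~\ref{p:globalfixed}, used inside Theorem~\ref{t-piecewise}) it passes to a finite-index subgroup fixing a point $x$, observes that the resulting group of one-sided germs at $x$ lies in $(\mathcal{PD}^1_+)_x$ --- so each germ is an honest $C^1$ germ on a half-open interval --- and applies Thurston's stability theorem to produce a nontrivial homomorphism to $\R$, contradicting property $\fw$. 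This is what the remark after Corollary~\ref{c-T} means by ``Thurston's stability theorem holds in piecewise $C^1$ regularity'': the finite-orbit case only needs piecewise $C^1$ data and a soft fixed-point property, not the full $C^{3/2}$ theorem of Navas. Your doubling construction can be made to work and is a legitimate alternative, but it is heavier than necessary.

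There is, however, a genuine gap in your part \eqref{i-T} as written. You invoke Theorem~\ref{t-finitelymany} (with $\cG=\mathcal D^{3/2}$) and then assert that, after the conjugacy, $\Gamma_0$ ``acts by $C^{3/2}$ diffeomorphisms of each \emph{closed} complementary interval.'' Theorem~\ref{t-finitelymany} only produces a conjugating map $\varphi$ in $F(S\mathcal D^{3/2})$, i.e.\ a homeomorphism that is $C^{3/2}$ away from finitely many points but need not have one-sided derivatives at those points; consequently the conjugated action is guaranteed to be $C^{3/2}$ only on the \emph{open} arcs complementary to $F$, and its elements may fail to have one-sided derivatives (let alone $1/2$-H\"older first derivatives up to the endpoint) at the points of $F$. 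Without that boundary regularity the doubled action is not $C^{3/2}$ at the gluing points, so Theorem~\ref{t:navas} does not apply, and no ``interval version'' is available either. The fix is exactly the paper's setup: run Proposition~\ref{p-pair} for the singular pair $\mathcal D^{3/2}\subset \mathcal{PD}^{3/2}$ (which the paper notes has resolvable singularities), so that the conjugacy is realised inside $\mathsf{PDiff}^{3/2}(\T)$ and the residual singular points retain one-sided derivatives with the H\"older condition; only then is the restriction to each closed arc a genuine $C^{3/2}$ diffeomorphism and your doubling argument goes through. (Two smaller points to record: you should also intersect $\Gamma_0$ with the orientation-preserving subgroup so that it actually preserves each complementary arc, and conclude triviality on each arc from the fact that a finite-order orientation-preserving homeomorphism of an interval fixing the endpoints is the identity.)
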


In an independent preprint \cite{CorPL}, Yves de Cornulier  has  obtained part \eqref{i-T} of Corollary \ref{c-T} for actions by {piecewise linear} or {projective} homeomorphisms where also points of discontinuity are allowed (see Question~\ref{q:exotic} below and the remark following it).

\begin{rem}
	The relevant difference between parts \eqref{i-T} and \eqref{i-T-countable} in  Corollary \ref{c-T} is that Thurston's stability theorem \cite{Th} holds in the group $\mathsf{PDiff}^{1}(\T)$ but not in the group $\Omega\mathsf{Diff}^{1}(\T)$. 
\end{rem}

\begin{rem}\label{r:sqrt2}
Corollary \ref{c-T} cannot be strengthened to obtain that every countable subgroup of $\pa$ with property $\fw$ is finite. For example the group $\PSL(2,\mathbb Z[\sqrt{2}])$ acts on the circle by M\"obius transformations. This group has property $\fw$, as proved by Cornulier  relying on work of Haglund and  Carter--Keller  \cite[Example 6.A.8]{Cor-FWsurvey}.
\end{rem}
\begin{rem}
If we allow an uncountable closed set of singularities (of empty interior), then \emph{every} countable group of circle homeomorphisms acting minimally is isomorphic to a group of singular diffeomorphisms. Blowing up an orbit, it is possible to obtain any desired level of regularity on the complement of a Cantor set. \end{rem}

Let us now discuss an application in higher dimension. Theorem \ref{t-main} can be applied to lattices in higher rank simple Lie groups, which are the primary source of examples of Kazhdan groups. The study of the rigidity properties of actions of such groups on compact manifolds was proposed by Zimmer \cite{Zim-program} as a ``non-linear'' generalisation of the classical rigidity results of  Mostow and Margulis, and is a well-developed topic. Most results and methods that have been developed  in this setting apply to actions by diffeomorphisms (with some required degree of regularity).  A central conjecture in Zimmer's program states that a lattice in a higher rank simple Lie group  has only (virtually) trivial actions on closed manifolds of dimension $<d$, where $d$ is an explicit constant depending on the ambient Lie group (bounded below by its real rank).  The conjecture has been (partially) solved recently with the breakthrough work of Brown, Fisher and Hurtado \cite{BFH,BFH2,DZ} for a class of actions by $C^2$-diffeomorphisms.

The case of actions by homeomorphisms that are not regular is considerably less understood, and the techniques employed come mostly from algebraic topology rather than from dynamical systems, see the survey \cite{Weinberger}. In particular, it remains  a well-known open problem whether Zimmer's conjecture holds for all action by homeomorphisms. 
In combination with the aforementioned results of Brown, Fisher and Hurtado \cite{BFH,BFH2}, Theorem~\ref{t-countable}  yields the following for countably singular actions. 

\begin{cor} \label{c-Zimmer}
	Let $M^d$ be a closed manifold of dimension $d$.
	Let $G$ be a connected Lie group, whose Lie algebra is simple and with finite centre. Assume that the real rank of $G$ is $r>d$ and let $\Gamma\subset G$ be a cocompact lattice, or $\Gamma=\mathsf{SL}(r+1,\Z)$.
	For any morphism $\rho:\Gamma\to \Omega\mathsf{Diff}^2(M)$, the action of $\rho(\Gamma)$
	on $M$ has a finite orbit.
\end{cor}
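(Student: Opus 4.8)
The plan is to feed the genuinely $C^2$ action produced by Theorem~\ref{t-countable} into the Brown--Fisher--Hurtado solution of Zimmer's conjecture. We may assume $d\ge 1$, the case $d=0$ being trivial since then $M$ is a finite set. First I would check that $\Gamma$ meets the hypotheses of Theorem~\ref{t-countable}: in each case $\Gamma$ is a lattice in a connected simple Lie group of real rank $r$, and $r>d\ge 1$ forces $r\ge 2$. Such a $\Gamma$ is finitely generated, and it has Kazhdan's property $(T)$: in the cocompact case because $G$ has property $(T)$ and property $(T)$ passes to lattices, and for $\Gamma=\mathsf{SL}(r+1,\Z)$ because this is a lattice in $\mathsf{SL}(r+1,\R)$, whose real rank $r$ is at least $2$. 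Since property $(T)$ implies property $\fw$, Theorem~\ref{t-countable} applies to $\rho\colon\Gamma\to\Omega\mathsf{Diff}^2(M)$. If conclusion~\eqref{i-finite-orbit} holds we are done, so assume~\eqref{i-smooth}: there are a closed differentiable manifold $N$ and a countably singular diffeomorphism $\varphi\colon M\to N$ with $\varphi\rho(\Gamma)\varphi^{-1}\subset\mathsf{Diff}^2(N)$.

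Now $\bar\rho\colon g\mapsto\varphi\rho(g)\varphi^{-1}$ is a homomorphism $\Gamma\to\mathsf{Diff}^2(N)$, that is, an honest action of $\Gamma$ on $N$ by $C^2$ diffeomorphisms. As $\varphi$ is a homeomorphism, $N$ is homeomorphic to $M$, so $\dim N=d<r$. The hypotheses of the resolution of Zimmer's conjecture are therefore met --- via \cite{BFH} when $\Gamma$ is a cocompact lattice in $G$, and via \cite{BFH2} when $\Gamma=\mathsf{SL}(r+1,\Z)$ --- and we conclude that $\bar\rho(\Gamma)$ is finite. (If $M$, hence $N$, is disconnected, first replace $\Gamma$ by the finite-index subgroup fixing one component of $N$ and $N$ by that component; a finite orbit there has finite $\Gamma$-saturation in $N$, and the finite-index subgroup is again a lattice of the type covered by the cited theorems.) Finally, a finite group of homeomorphisms of a nonempty manifold has a finite orbit: for any $x\in N$ the set $\bar\rho(\Gamma)\cdot x$ is finite, whence $\rho(\Gamma)\cdot\varphi^{-1}(x)=\varphi^{-1}(\bar\rho(\Gamma)\cdot x)$ is a finite orbit of $\rho(\Gamma)$ on $M$, which is what we wanted.

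Beyond invoking these two theorems, the only point that genuinely needs attention --- and the one place a careless argument might slip --- is that Theorem~\ref{t-countable} produces a smooth action not on $M$ but on an a priori exotic manifold $N$, so one must make sure Zimmer rigidity still applies there. This is immediate once one records that $N$ is homeomorphic to $M$, hence of dimension $d<r$, and that $\bar\rho$ is a bona fide $C^2$ action; accordingly I expect no substantive obstacle, the corollary being essentially a formal consequence of Theorem~\ref{t-countable} and \cite{BFH,BFH2}.
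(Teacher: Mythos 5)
Your argument is correct and is exactly the paper's route: apply Theorem~\ref{t-countable} (using that $r>d\ge 1$ forces $r\ge 2$, so $\Gamma$ has property $(T)$ and hence $\fw$), and in the non-finite-orbit case feed the resulting honest $C^2$ action on the homeomorphic manifold $N$ (still of dimension $d<r$) into \cite{BFH,BFH2} to get finite image, hence a finite orbit pulling back to $M$. The paper leaves these details implicit, but there is no difference in substance.
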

Using a more general version of Theorem \ref{t-countable} (namely Theorem \ref{t-main}), we also establish an analogous statement in the case of actions preserving a volume form, see Corollary  \ref{c-Zimmer-vol}.

\begin{rem}
	Here we have simplified the statement in order to avoid introducing too much
	cumbersome notation. The interested reader will be able to improve the above statement from the results appearing in \cite{BFH,BFH2,DZ,Cantat}. Note also that Theorem~\ref{t-finitelymany} gives a stronger result for actions by elements with finitely many singularities.
\end{rem}

\begin{rem}
	For {groups of elementary matrices}, and for {automorphism groups of free groups}  (and some relatives) questions related to Zimmer's program have been largely investigated by Ye, with no restrictions on regularity (see e.g.~\cite{Ye1,Ye2,Ye4}).	Let us also mention \cite{FS}, where it is proved that groups with no finite quotients and a very strong fixed point property (as some models of \emph{random groups}), cannot act on a closed manifold preserving a smooth volume form. It is a well-known open problem, attributed to Gromov, to prove that a random group cannot act on a closed manifold.
\end{rem}

Let us give an outline of the paper and of the proof of Theorem \ref{t-countable}. The core of our argument has purely topological dynamical nature (no manifolds are involved, only group actions on locally compact spaces), and is contained in Section \ref{s:groupoids}. We consider an action of a countable group~$\Gamma$ on a locally compact space $X$ whose \emph{germs} at every point  belong to a given \emph{groupoid of germs}~$\cH$, except for a  finite set of singular points for every  group element (all the necessary terminology will be defined in Section~\ref{s:groupoids}). We provide a criterion on $\cH$ that implies, when $\Gamma$ has property $\fw$, that the action is conjugate to an action whose germ at every point belongs to $\cH$.  To do so we construct a commensurating action of $\Gamma$  inspired from work of Juschenko--Nekrashevych--de la Salle on amenability of groups of homeomorphisms \cite{JNS}.   We then extend this to the case of countably many singularities by transfinite induction on the \emph{Cantor--Bendixson rank} of the set of singular points of the generators of $\Gamma$.

In Section \ref{s:manifolds} we apply this criterion to singular actions on manifolds. In the course of the proof, we are faced to the problem of extending a finite collection of  germs of singular diffeomorphisms to the whole ambient manifold,  without introducing new singularities. Such an extension does not always exist. In the case of actions by diffeomorphisms the obstruction to its existence is measured by the group of twisted $n$-spheres $\Gamma_n=\pi_0(\mathsf{Diff}_+(\mathbb S^{n-1}))/\pi_0(\mathsf{Diff}_+(\mathbb D^n))$, which is in one-to-one correspondence with the differentiable structures of $\mathbb{S}^n$ (for $n\neq 4$). Taking a connected sum with exotic spheres allows to eliminate this obstruction by producing another manifold, homeomorphic to the original one, on which the action can be regularised.  In fact, the set of diffeomorphisms of class $C^r$ do not play any special role in the proof of Theorem \ref{t-countable}. We shall work in a more general setting  (see Theorem \ref{t-main}), by allowing actions that are countably singular with respect to an arbitrary given {groupoid of germs}  $\cG$ of local homeomorphisms of $\mathbb{R}^d$, provided $M$ has the structure of a  $\cG$-\emph{manifold}, i.e.~admits an atlas whose changes of charts have germs in $\cG$ (see Thurston's book \cite[Chapter 3]{Th-book} as a standard reference on $\cG$-manifolds).

In Section \ref{s:circle} we discuss the case of piecewise differentiable circle diffeomorphisms, and then  focus on the example of  groups of \emph{piecewise linear} (PL) homeomorphisms of the circle. 
In this case, we provide a more elementary proof of the fact that they do not have property $(T)$, which only relies on H\"older's theorem. Namely, we will exhibit an isometric action of $\PL(\T)$ on the Hilbert space $\ell^2(\T)$, with linear part defined by the action of the group on $\T$: we twist the linear action with a cocycle that measures the failure of elements to be affine. This cocycle has been widely used (implicitly or explicitly) for understanding many properties of groups of piecewise linear homeomorphisms \cite{GS,minakawa,GL,Liousse} (just to cite a few). This is to be compared to the result by Farley and Hughes \cite{farley, hughes} that Thompson's group $V$ has the Haagerup property, which is also proved by exhibiting an explicit proper action of $V$ on a Hilbert space (Farley's  proof of this result in \cite{farley} had a gap and has been fixed by  Hughes \cite{hughes}).

Let us end this introduction with a list of problems.

\begin{ques}
Which subgroups of $\pa$ have the Haagerup property?
\end{ques}

\begin{ques} 
Does the group $\Omega \dg(\R)$ of countably singular diffeomorphisms of the real line contain infinite countable property $(T)$ subgroups? 
\end{ques}

\begin{ques}
Study Zimmer's conjecture for piecewise differentiable (or PL) actions of lattices in higher rank simple Lie groups on manifolds of small dimension. (cf.~the work of Ye \cite{Ye3}.)
\end{ques}

\begin{ques}
Consider the renormalised linear action of $\mathsf{SL}(n+1, \Z)$ on the sphere $\mathbb{S}^n$, with $n\ge 7$. For which differentiable structures on $\mathbb{S}^n$ is this action topologically conjugate to a smooth action?
\end{ques}
 
\begin{ques}\label{q:AEIT}
Extend Corollary \ref{c-T} to groups of discontinuous transformations, such as AIET, the group of affine interval exchange transformations. (The group of isometric interval exchange transformations does not contain infinite countable Kazhdan subgroups \cite{DFG}.) 
\end{ques}
\begin{ques}[cf.~\cite{Sergiescu}]\label{q:exotic}
	{Classify the subgroups of the group of piecewise projective circle homeomorphisms $\PP(\T)$ that are topologically conjugate to $\PSL(2,\R)$, up to PP conjugacy.}
\end{ques}
 Questions~\ref{q:AEIT} and \ref{q:exotic} (that were written in a preliminary version of this paper) have been meanwhile answered by Yves de Cornulier. In relation to Question \ref{q:AEIT}, he proves \cite[Corollary~1.8]{CorPL} that the group AIET contains no infinite countable subgroup with property $\fw$ (and more generally his arguments allow to consider transformations with finitely many discontinuity points; see also Remark~\ref{r-YvesCornulier}). On the other hand, the answer \cite[Theorem~1.13]{CorPL} to Question \ref{q:exotic} makes use of the classification of  projective one-manifolds and their automorphism groups: using the fact that $\PSL(2,\Z[\sqrt{2}])$ has property $\fw$ (Remark~\ref{r:sqrt2}) and is dense in $\PSL(2, \R)$, he concludes that the action of a topological $\PSL(2,\R)$ must preserve a projective structure on $\T$.

 \section{Property $\fw$ and singular actions on locally compact spaces }\label{s:groupoids}

\subsection{Groupoids of germs}

Throughout the section, we  let $X$ be a locally compact Hausdorff space.   We will be primarily interested in the case where the space $X$ is compact. However, in the course of the proofs, it will be useful to consider non-compact spaces as well.

Recall that the \emph{support} of a homeomorphism $h$ of $X$ is the subset $\mathsf{supp}(h)=\overline{\{x\in X\colon h(x)\neq x\}}$ of $X$, which is the closure of the set of points which are moved by $h$. A  \emph{germ} (on  the space $X$) is the equivalence class of a pair $(h, x)$, where $x\in X$ and $h$ is a homeomorphism defined from a neighbourhood of $x$ to a neighbourhood of $h(x)\in X$, and where $(h_1, x_1)$ and $(h_2, x_2)$ are equivalent if $x_1=x_2$ and $h_1$ and $h_2$ coincide on a neighbourhood of $x_1$. We denote by $[h]_x$ the equivalence class of $(h, x)$.  If $\gamma=[h]_x$ is a germ, where  $(h, x)$ is a representative pair of $\gamma$, the points $s(\gamma)=x$ and $t(\gamma)=h(x)$ are called the \emph{source} and the \emph{target} of $\gamma$.  Two germs $\gamma_1$ and  $\gamma_2$ can be multiplied provided $t(\gamma_2)=s(\gamma_1)$, and in this case $\gamma_1\gamma_2=[h_1h_2]_{s(\gamma_2)}$, where $(h_1, s(\gamma_1))$ and $(h_2, s(\gamma_2))$ are any choice of representatives of $\gamma_1$ and $\gamma_2$. They are inverted according to the rule $[h]_x^{-1}=[h^{-1}]_{h(x)}$.   Given a point $x\in X$, we will denote by $1_x$ the germ of the identity homeomorphism at $x$.

\begin{dfn} \label{d-groupoid}A \emph{groupoid of germs} (over $X$) is a set $\cG$ of germs on $X$ which verifies the following properties:
\begin{enumerate}[(i)]
\item if $\gamma_1$ and $\gamma_2\in \cG$ are such that $s(\gamma_1)=t(\gamma_2)$, then $\gamma_1\gamma_2\in \cG$, and we have $\gamma^{-1}\in \cG$ for every $\gamma\in \cG$;
\item we have $1_x\in \cG$ for every $x\in X$;
\item \label{i-groupoid-etale} every $\gamma\in \cG$ admits a representative pair $(h, s(\gamma))$ with the property that $[h]_y\in\cG$ for every $y$ in the domain of definition of $h$. 
\end{enumerate}
The  \emph{topological full group}  of $\cG$ is the group $F(\cG)$ of all compactly supported homeomorphisms $g$ of the space $X$ with the property that $[g]_x\in \cG$ for every point $x\in X$.
\end{dfn} 

\subsection{Finitely many singularities}

In this subsection we study group actions on locally compact spaces whose germs at every point belong to a given groupoid of germs, except for  finitely many isolated ``singularities''. To formalise this idea, we will work in the following setting. 

\begin{dfn} Let $\cH$ be a groupoid of germs over $X$. We denote by $S\cH$ the groupoid of germs consisting of all germs $\gamma$ that admit a representative  $(h, s(\gamma))$ such that  $[h]_y\in \cH$ for all $y\neq s(\gamma)$ in the domain of definition of $h$. 

A \emph{singular pair} of groupoids is a pair $\cH\subset \cG$ of groupoids of germs over $X$ such that $\cH\subset \cG\subset S\cH$. 
\end{dfn}

Given a singular pair $\cH\subset \cG$ will say that a germ $\gamma \in \cG$ is \emph{singular} if it does not belong to $\cH$, and that it is \emph{regular} otherwise.  Similarly given $g\in F(\cG)$, we will say that a point $x\in X$ is \emph{singular} for $g$ if $[g]_x\notin \cG$, and \emph{regular} otherwise.

A simple compactness argument yields the following lemma:
\begin{lem}\label{l-co-discrete}
 If $\cH\subset \cG$ is a singular pair, every element $g\in F(\cG)$ has finitely many singular points. 
\end{lem}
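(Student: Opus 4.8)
The plan is to exploit the fact that $g$ is a \emph{compactly supported} homeomorphism in the topological full group $F(\cG)$, together with the étale-ness condition (iii) in Definition~\ref{d-groupoid} applied to the bigger groupoid $\cG \subset S\cH$. First I would observe that the set $\Sigma_g = \{x\in X : [g]_x\notin\cH\}$ of singular points of $g$ is contained in the support $\mathsf{supp}(g)$, which by hypothesis is compact. Indeed, if $x\notin\mathsf{supp}(g)$ then $g$ is the identity on a neighbourhood of $x$, so $[g]_x = 1_x\in\cH$. Thus it suffices to show that $\Sigma_g$ is a discrete (equivalently, closed and discrete) subset of $X$, since a discrete closed subset of a compact space is finite.

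For discreteness, fix a singular point $x$ for $g$, i.e. $[g]_x\in\cG\setminus\cH$. Since $[g]_x\in\cG$ and $\cG$ is a groupoid of germs, by condition (iii) of Definition~\ref{d-groupoid} the germ $[g]_x$ admits a representative pair $(h, x)$ with the property that $[h]_y\in\cG$ for every $y$ in the domain $U$ of $h$. Shrinking $U$, we may assume $h$ agrees with $g$ on $U$, so in fact $[g]_y = [h]_y\in\cG \subset S\cH$ for every $y\in U$. Now I want to see that $x$ is the only singular point of $g$ inside $U$, i.e. $[g]_y\in\cH$ for every $y\in U\setminus\{x\}$. This is precisely where the definition of $S\cH$ enters: by hypothesis $\cG\subset S\cH$, so $[g]_x = [h]_x\in S\cH$, which by definition means there is a representative $(h', x)$ of $[h]_x$ — hence, shrinking $U$ further, we may take $h' = h$ on a neighbourhood — such that $[h]_y\in\cH$ for all $y\neq x$ in the domain of $h$. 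Therefore on a possibly smaller neighbourhood $U'$ of $x$ we have $[g]_y\in\cH$ for every $y\in U'\setminus\{x\}$, so $U'$ meets $\Sigma_g$ only in $x$. This shows $\Sigma_g$ is discrete; together with $\Sigma_g\subset\mathsf{supp}(g)$ and the fact that $\Sigma_g$ is closed (its complement is open: each regular point has a neighbourhood on which all germs of $g$ lie in $\cG$, and by the $S\cH$ argument these can be upgraded to $\cH$ — or more simply, the regular points of $g$ form an open set because $\cG$ germs are detected on open sets), we conclude $\Sigma_g$ is a discrete closed subset of the compact set $\mathsf{supp}(g)$, hence finite.

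I expect the main (albeit minor) obstacle to be a careful bookkeeping of the successive shrinkings of the neighbourhood $U$ and verifying that the représentatif produced by condition (iii) for $\cG$ can be reconciled with the one produced by the definition of $S\cH$ — both are statements about the \emph{same} germ $[g]_x$, so they share a common refinement, and on that common refinement we get simultaneously that all germs of $g$ lie in $\cG$ and that all germs at points $\neq x$ lie in $\cH$. One should also make sure that ``$\mathsf{supp}(g)$ compact'' is indeed part of the definition of $F(\cG)$ (it is: Definition~\ref{d-groupoid} requires compactly supported homeomorphisms), so no extra hypothesis on $X$ is needed beyond local compactness and Hausdorffness; Hausdorffness guarantees that a discrete subset of a compact set is finite. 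This completes the proof.
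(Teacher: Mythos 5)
Your proof is correct and is essentially the expansion of the paper's own (one-line) argument: the singular set lies in the compact support, the definition of $S\cH$ makes each singular point isolated, condition (iii) of Definition~\ref{d-groupoid} makes the regular set open, and a closed discrete subset of a compact set is finite. The only cosmetic remark is that the final reconciliation of the two representatives is unnecessary (the $S\cH$ representative alone suffices for discreteness), and the closing phrase ``a discrete subset of a compact set is finite'' should keep the word ``closed,'' which your main argument does supply.
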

\begin{proof}
This follows from the fact that elements of $F(S\cH)$ are compactly supported, and from a simple compactness argument. 
\end{proof}

We also introduce the following \emph{ad-hoc} terminology.
\begin{dfn}\label{d-extension}
 We say that the a singular pair $\cH\subset \cG$ has  \emph{resolvable singularities} if the following holds.  For every choice of finitely many germs $\gamma_1,\ldots, \gamma_\ell\in \cG$ such that $t(\gamma_1), \ldots, t(\gamma_\ell)$ are distinct points,  and every compact subset $K\subset X$  there exists an element $\varphi\in F(\cG)$ with the following properties.
\begin{enumerate}[(i)]
\item We have $[\varphi]_{t(\gamma_i)}\gamma_i\in \cH$ for every $i=1,\ldots, \ell$.  \label{i-resolvable}
\item We have $[\varphi]_y\in \cH$ for every $y\in K\setminus \{t(\gamma_1), \ldots, t(\gamma_\ell)\}$.\label{ii-resolvable}    \end{enumerate}
\end{dfn}
Intuitively, this means that any finite family of singular germs  $\gamma_1, \ldots, \gamma_\ell\in\cG$ can be ``resolved'' (i.e.~brought back to $\cH$) by post-composing them with an element $\varphi\in F(\cG)$ that can be chosen without any additional singularities in any arbitrarily large compact subset. 
Note that if the space $X$ is compact, the compact  subset $K$ is redundant in the definition as we can  choose $K=X$.

The reader can have in mind the following example: $\cH$ is the groupoid of all germs of partially defined diffeomorphisms of the circle, and $\cG$ is defined similarly by allowing isolated discontinuities for the derivatives. In this situation, the fact that the pair $\cH\subset \cG$ has resolvable singularities  is easy to verify (and will follow from Proposition~\ref{l-Dr}).

We consider now subgroups of the topological full group $F(\cG)$.

\begin{rem}
	If a group $\Gamma$ is countable, then property $\fw$ implies automatically that it is finitely generated (this is already a consequence of property FA of Serre, which is implied by $\fw$, see \cite{Cor-FWsurvey}). 
\end{rem}

Given a subgroup $\Gamma\subset F(\cG)$, its \emph{support} $\mathsf{supp}(\Gamma)=\overline{\bigcup_{h\in \Gamma}\mathsf{supp}(h)}$ is the closure in $X$ of the set of points of $X$ that are moved by some element of $\Gamma$. When the subgroup is finitely generated, as it is the case for countable subgroups with property $\fw$, the support of $\Gamma$ is just the union of the supports of elements in a finite generating system. 
We also say that a point $x\in X$ is \emph{singular} for $\Gamma$ if there exists an element $g\in \Gamma$ such that $x$ is singular for $g$.

\begin{prop}\label{p-pair}
Let $\cH\subset \cG$ be a singular pair of groupoids of germs over  $X$ with resolvable singularities. Let $\G\subset F(\cG)$ be a countable subgroup, with property $\fw$. Then there exists a homeomorphism $\varphi\in F(\cG)$  such that  the set of singular points of the conjugate group $\varphi \Gamma \varphi^{-1}$ is finite and consists of points with a finite  $\varphi\Gamma \varphi^{-1}$-orbit.

In particular, if the action of $\Gamma$ on its support has no finite orbits, then $\Gamma$ is conjugate in $F(\cG)$ to a subgroup of $F(\cH)$.
\end{prop}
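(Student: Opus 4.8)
The plan is to build a commensurated subset of a suitable $\Gamma$-set out of the singular data, invoke property $\fw$ to transfix it, and then feed the transfixing invariant set into the resolvable-singularities hypothesis to produce the conjugating element $\varphi$. Fix a finite symmetric generating set $\{g_1,\dots,g_k\}$ of $\Gamma$ (finite generation being automatic from $\fw$), and recall from Lemma~\ref{l-co-discrete} that each $g_i$ has only finitely many singular points; hence the set $\Sigma$ of all points that are singular for some element of $\Gamma$ is a countable $\Gamma$-invariant subset of $X$, discrete in the sense that it meets every compact set finitely. The first step is to package the germs along $\Sigma$. For each $x\in\Sigma$ and each $g\in\Gamma$ the germ $[g]_x$ lies in $\cG=S\cH$, so by definition it has a representative $(h,x)$ with $[h]_y\in\cH$ for all $y\neq x$ nearby; the obstruction to $[g]_x$ itself lying in $\cH$ is a "singularity type" at $x$. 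I would like to encode, for a candidate conjugator, the finite set of points where it fails to be regular; the natural $\Gamma$-set to work with is the collection $X$ itself (or $\Sigma$) with its $\Gamma$-action, and the natural commensurated subset will be read off from a reference choice of germs.

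Concretely, here is the mechanism I expect to use, modelled on Juschenko--Nekrashevych--de la Salle \cite{JNS}. Using resolvable singularities, for every finite $\Gamma$-invariant portion of $\Sigma$ one can find an element of $F(\cG)$ that resolves the corresponding singular germs; the point is to make these local resolutions cohere into a single $\varphi$. So I would consider the set $\mathcal{A}$ of pairs $(x,\text{germ})$ — more precisely, fix once and for all, for each $x$ in a set of $\Gamma$-orbit representatives of $\Sigma$, a germ $\psi_x\in\cG$ with $[\psi_x]_x\,[\mathrm{id}]_x$... rather, a germ that "straightens" the worst singularity at $x$ — and transport it over the orbit. The discrepancy cocycle: for $g\in\Gamma$ and $x\in\Sigma$, compare the straightening at $gx$ transported back by $g$ with the straightening at $x$; this lands in $\cH$ off a finite set precisely because $\cG\subset S\cH$ and each generator has finitely many singularities. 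This produces a subset $A\subset \Sigma$ (the points where the two straightenings genuinely disagree, i.e. where a singularity persists) which is commensurated by $\Gamma$: $gA\bigtriangleup A$ is contained in the finite singular set of $g$ together with its image, hence finite. Property $\fw$ then gives a $\Gamma$-invariant $B\subset\Sigma$ with $A\bigtriangleup B$ finite.

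The final step is to convert the invariant set $B$ into the conjugator. Since $B$ is $\Gamma$-invariant and $A\bigtriangleup B$ is finite, after modifying the straightening on the finite symmetric difference we may assume the straightening germs are defined $\Gamma$-equivariantly along $\Sigma\setminus B$ and are trivial (in $\cH$) along $B$; the bad points that remain form a finite $\Gamma$-invariant set, namely (a subset of) $B$ together with the finitely many points of $A\bigtriangleup B$ — and being finite and $\Gamma$-invariant, each has finite orbit. Now apply the resolvable-singularities property (Definition~\ref{d-extension}) to the finitely many germs $\gamma_i$ recording the residual singular data at these finitely many points, with $K$ a large compact set, or $K=X$ when $X$ is compact, to obtain $\varphi\in F(\cG)$ with $[\varphi]_{t(\gamma_i)}\gamma_i\in\cH$ and $[\varphi]_y\in\cH$ elsewhere on $K$; this $\varphi$ conjugates $\Gamma$ to a group whose only singular points are the finitely many remaining ones, each with finite orbit. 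For the "in particular" clause: if $\Gamma$ acting on its support has no finite orbit, there are no such residual points, so $\varphi\Gamma\varphi^{-1}$ has every germ in $\cH$, hence lies in $F(\cH)$.

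I expect the main obstacle to be the coherence step — arranging the locally-resolving germs into a genuine $\Gamma$-equivariant family so that the set $A$ of "unresolved" points is well-defined and manifestly commensurated, and then checking that after transfixing one can really absorb the finite symmetric difference without creating new singularities outside a finite set. This is where the non-compact case and the compact subsets $K$ in Definition~\ref{d-extension} have to be handled carefully (exhausting $X$ by compacts and taking a limit, or patching), and where the precise bookkeeping of source/target points of germs under the groupoid operations must be done honestly rather than sketched.
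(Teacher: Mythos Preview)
Your overall architecture is right --- build a commensurating action, apply $\fw$, then use resolvable singularities to realise the outcome as a conjugator --- and you even briefly name the correct $\Gamma$-set (``pairs $(x,\text{germ})$''). But you then abandon it and try to work inside $X$ (or $\Sigma$), pre-choosing straightening germs $\psi_x$ by transport along orbits and declaring $A\subset\Sigma$ to be the locus where transports disagree. This is where the argument breaks: transport of a germ from a reference point to $x$ depends on the group element used, and two such transports differ by the germ at the reference point of an element of the stabiliser, which can itself be singular. So $A$ is not well-defined as stated, and the ``coherence problem'' you flag at the end is not a technicality but the actual content of the proposition.

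The paper avoids this entirely by letting $\Gamma$ act on the coset space $\cG/\cH$ via $g\cdot\gamma\cH=[g]_{t(\gamma)}\gamma\cH$ and taking as commensurated set the \emph{trivial section} $A=\{1_x\cH:x\in Y\}$ over $Y=\mathsf{supp}(\Gamma)$. Commensuration is then immediate: $g\cdot 1_x\cH\neq 1_{g(x)}\cH$ iff $[g]_x\notin\cH$, which happens at finitely many $x$. The transfixed invariant set $B$ now \emph{is} the equivariant family of straightening germs: over each $x$ outside a finite invariant set $E$ (the points where $|t^{-1}(x)\cap B|\neq 1$), $B$ singles out a unique coset $\gamma_x\cH$, and invariance reads $\gamma_{g(x)}\cH=[g]_x\gamma_x\cH$. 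No transport, no coherence issue. Resolvable singularities is then applied not at the finite-orbit points, as you suggest, but to the finite set $\{x:\gamma_x\cH\neq 1_x\cH\}$, producing $\varphi\in F(\cG)$ with $[\varphi^{-1}]_{\varphi(x)}\cH=\gamma_x\cH$ for all $x\in Y\setminus E$; a direct computation then shows every singular point of $\varphi\Gamma\varphi^{-1}$ lies in the finite invariant set $\varphi(E)$.
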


 In the proof, we will use the following terminology. Given a pair of groupoids of germs $\cH\subset \cG$, the corresponding \emph{coset space} $\cG/\cH$ is defined as the set of equivalence classes of  the equivalence relation on $\cG$ that identifies two elements $\gamma_1$ and $\gamma_2$ of $\cG$  if $t(\gamma_1)=t(\gamma_2)$ and $\gamma_1^{-1}\gamma_2\in \cH$. The equivalence class of $\gamma$ is denoted by $\gamma\cH$. Note that the target map $t\colon \cG\to X$ descends to a well-defined map $t\colon \cG/\cH\to X$, $t(\gamma\cH)=t(\gamma)$.

\begin{proof}[Proof of Proposition \ref{p-pair}]
 We let the group $\G$ act on the coset space $\cG/\cH$ by the rule 
 \[g \cdot \gamma\cH = [g]_{t(\gamma)}\gamma \cH, \quad  \text{where }g\in \G \text{ and }
\,  \gamma \cH\in \cG/\cH.\]
 Observe that the target map $t\colon \cG/\cH\to X$ is $\Gamma$-equivariant. Let $Y=\mathsf{supp}(\Gamma)\subset X$ be the support of $\Gamma$. The action of $\Gamma$ is trivial in restriction to fibres $t^{-1}(x)$ for $x\notin Y$. Therefore, the subset $t^{-1}(Y)=\{\gamma\cH\in \cG/\cH \colon t(\gamma)\in Y\}$ is a $\Gamma$-invariant subset of $\cG/\cH$. Consider  the trivial section $A=\{1_x\cH \colon x\in Y\}\subset t^{-1}(Y)$ in $\cG/\cH$.  
 
 \begin{claim}
 The subset $A$ is commensurated.
 \end{claim}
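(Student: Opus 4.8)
The plan is to show that the symmetric difference $g(A)\triangle A$ is finite for each $g\in\Gamma$, and by finite generation of $\Gamma$ (guaranteed by property $\fw$) it suffices to do this for a finite generating set, or indeed for a single $g$ at a time. Fix $g\in\Gamma$ and analyse $g\cdot (1_x\cH)=[g]_x\cdot 1_x\cH=[g]_x\cH$. If $x$ is a \emph{regular} point of $g$ (i.e.~$[g]_x\in\cH$), then $[g]_x\cH$ has target $g(x)$ and $[g]_x^{-1}\cdot 1_{g(x)}=[g^{-1}]_{g(x)}=[g]_x^{-1}\in\cH$, so $[g]_x\cH=1_{g(x)}\cH$, which lies in $A$ whenever $g(x)\in Y$; and since $Y$ is $\Gamma$-invariant, $x\in Y\iff g(x)\in Y$. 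Hence for $x\in Y$ a regular point of $g$, $g\cdot(1_x\cH)=1_{g(x)}\cH\in A$. This already shows $g(A)\subset A\cup\{g\cdot(1_x\cH):x\in Y\text{ singular for }g\}$.

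The key input is then Lemma~\ref{l-co-discrete}: since $g\in F(\cG)$ and $\cH\subset\cG$ is a singular pair, $g$ has only finitely many singular points, say $x_1,\dots,x_k$. Therefore $g(A)\setminus A$ is contained in the finite set $\{[g]_{x_1}\cH,\dots,[g]_{x_k}\cH\}$, so it is finite. Applying the same reasoning to $g^{-1}$ (whose singular set is $g(\{x_1,\dots,x_k\})$, also finite) shows $g^{-1}(A)\setminus A$ is finite, i.e.~$A\setminus g(A)=g(A\setminus g^{-1}(A))$ wait --- more directly, $A\setminus g(A)=g\bigl(g^{-1}(A)\setminus A\bigr)$ is the image under $g$ of a finite set, hence finite. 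Combining, $g(A)\triangle A=(g(A)\setminus A)\cup(A\setminus g(A))$ is finite, which is exactly the assertion that $A$ is commensurated.

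I do not expect any serious obstacle here: the statement is essentially bookkeeping with the definitions of the coset space action, the $\Gamma$-invariance of $Y=\mathsf{supp}(\Gamma)$, and the finiteness of singular sets from Lemma~\ref{l-co-discrete}. The one point deserving a line of care is the verification that for a regular point $x$ the class $[g]_x\cH$ really equals $1_{g(x)}\cH$ in $\cG/\cH$ --- this is the computation $1_{g(x)}^{-1}[g]_x=[g]_x\in\cH$, which uses precisely that $\cH$ is closed under the groupoid operations (Definition~\ref{d-groupoid}(i)) and that ``regular'' means $[g]_x\in\cH$ (here $\cH\subset\cG$, but in fact regularity was defined as $[g]_x\in\cG$; since the germ of a global element of $F(\cG)$ is always in $\cG$, the relevant dichotomy at a point is whether $[g]_x\in\cH$ or merely in $\cG\setminus\cH$). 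The argument never uses property $\fw$ beyond (optionally) finite generation, and it does not yet use ``resolvable singularities'' --- those enter only later in the proof of Proposition~\ref{p-pair}, when one transfixes $A$ and builds the conjugating element $\varphi$.
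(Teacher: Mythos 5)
Your proof is correct and follows essentially the same route as the paper: identify $g\cdot(1_x\cH)=[g]_x\cH$, observe it lies in $A$ exactly when $x$ is a regular point of $g$ (using $\Gamma$-invariance of $Y$), and invoke Lemma~\ref{l-co-discrete} for $g$ and $g^{-1}$ to bound both halves of the symmetric difference. Your side remark about the definition of ``singular for $g$'' is well taken --- the paper's $[g]_x\notin\cG$ is evidently a typo for $[g]_x\notin\cH$, and your reading is the one the argument needs.
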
 
 \begin{proof}[Proof of claim]
Take an element $g\in \G$. For every coset $1_x\cH\in A$, the condition that $g \cdot  1_x\cH=[g]_x\cH\notin A$ is equivalent to the fact that $[g]_x\notin \cH$, and there are only finitely many points $x\in Y$ with this property by Lemma~\ref{l-co-discrete}. This shows that $g(A)\setminus A$ is finite. The same reasoning applied to $g^{-1}$ shows that $A\setminus g(A)$ is also finite. Hence $A$ is commensurated. 
 \end{proof}

  Since we are assuming that $\G$ has $\fw$, it follows that $A$ is transfixed. Let $B\subset t^{-1}(Y)$ in $\cG/\cH$, be a $\G$-invariant subset  such that $A\triangle B$ is finite.

\begin{claim}
There exists a finite subset $E\subset Y$ which is $\Gamma$-invariant and such that for every $x\in Y\setminus E$, we have $|t^{-1}(x)\cap B|=1$.
\end{claim}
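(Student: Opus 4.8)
The plan is to analyze the structure of the $\Gamma$-invariant set $B$ fiber by fiber over $Y$. First I would observe that since $A \triangle B$ is finite, there is a finite subset $F_0 \subset Y$ such that for every $x \in Y \setminus F_0$ the fiber $t^{-1}(x) \cap B$ agrees with $t^{-1}(x) \cap A = \{1_x\cH\}$; in particular $|t^{-1}(x) \cap B| = 1$ outside $F_0$. However, $F_0$ itself need not be $\Gamma$-invariant, so the next step is to enlarge it. Note that the ``defect set'' $D = \{x \in Y : |t^{-1}(x) \cap B| \neq 1\}$ is contained in $F_0$, hence finite, and I claim it is $\Gamma$-invariant: because $B$ is $\Gamma$-invariant and the $\Gamma$-action on $\cG/\cH$ covers the $\Gamma$-action on $X$ via the equivariant map $t$, the restriction of any $g \in \Gamma$ to fibers gives a bijection $t^{-1}(x) \cap B \to t^{-1}(g(x)) \cap B$, so $|t^{-1}(x)\cap B| = |t^{-1}(g(x)) \cap B|$ for all $x \in Y$. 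Thus $g(D) = D$ for every $g \in \Gamma$, and we may set $E = D$.

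The one point requiring a little care is to rule out fibers where $B$ is empty or too large, i.e.\ to check that the ``exceptional'' behavior really is confined to a finite set. That $|t^{-1}(x) \cap B|$ can only differ from $1$ on a finite set follows at once from finiteness of $A \triangle B$: each $x$ with $t^{-1}(x) \cap B \neq \{1_x\cH\}$ contributes at least one element to $A \triangle B$ (either $1_x\cH \in A \setminus B$, or some $\gamma\cH \in B \setminus A$ with $t(\gamma) = x$), and distinct such $x$ contribute distinct elements since the fibers of $t$ are disjoint. So the map $x \mapsto$ (a chosen witnessing element of $A \triangle B$ in $t^{-1}(x)$) is injective into the finite set $A \triangle B$, giving finiteness of $D$.

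I do not expect any serious obstacle here: the argument is essentially bookkeeping combining (a) finiteness of the symmetric difference with the fact that $A$ is a ``one point per fiber'' section, and (b) $\Gamma$-equivariance of $t$ to promote the finite defect set to a $\Gamma$-invariant one. The only mildly delicate point is the equivariance of the fiberwise bijection — one should note that $g \cdot \gamma\cH = [g]_{t(\gamma)}\gamma\cH$ has target $t([g]_{t(\gamma)}\gamma) = g(t(\gamma))$, so $g$ indeed maps $t^{-1}(x)$ into $t^{-1}(g(x))$ bijectively (with inverse given by $g^{-1}$), whence it restricts to a bijection on the intersections with the invariant set $B$. Taking $E := D$ then completes the proof.
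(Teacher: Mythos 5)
Your proposal is correct and is essentially the paper's own argument: the paper also sets $E=\{x\in Y:|t^{-1}(x)\cap B|\neq 1\}$, deduces $\Gamma$-invariance from invariance of $B$ together with equivariance of $t$, and deduces finiteness from finiteness of $A\triangle B$ and the fact that $A$ meets each fiber $t^{-1}(x)$ in exactly the point $1_x\cH$. You have merely spelled out the bookkeeping (the injective assignment of witnesses in $A\triangle B$) that the paper leaves implicit.
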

\begin{proof}[Proof of claim]
We set $E=\{x\in Y\colon |t^{-1}(x)\cap B|\neq 1\}$. 
Then, by invariance of $B$ and equivariance of~$t$, it follows that $E$ is $\Gamma$-invariant. Since $B\triangle A$ is finite, and $A\cap t^{-1}(x)=\{1_x\cH\}$ for every $x\in Y$,  we deduce that $E$ must be finite.
\end{proof}

For every $x\in Y\setminus E$, denote by $\gamma_x\cH$ the unique element of $t^{-1}(x)\cap B$  (where we fix arbitrarily a representative $\gamma_x$ of the coset for every $x\in Y\setminus E$).  Note that, since  $B\triangle A$ is finite, we have $\gamma_x\cH=1_x\cH$ for all but finitely many $x$. Note also that $t(\gamma_x)=x$ for every $x\in Y$. Invariance of $B$  reads as follows:
\begin{equation}
\label{e-inv} \gamma_{g(x)}\cH=[g]_x\gamma_x\cH, \quad \forall x\in Y\setminus E,\,  \forall g\in \G.
\end{equation}
\begin{claim}\label{c-conj}
There exists $\varphi \in F(\cG)$ with the property that
\begin{equation*}
\label{e-phi} [\varphi^{-1}]_{\varphi(x)}\cH=\gamma_x\cH, \quad \forall x\in Y\setminus E.
\end{equation*}\end{claim}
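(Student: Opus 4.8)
The plan is to construct $\varphi$ as an element of $F(\cG)$ using the resolvable-singularities hypothesis, applied to the finite collection of cosets $\gamma_x\cH$ that genuinely differ from the trivial section. First I would observe that, since $B \triangle A$ is finite, the set $F = \{x \in Y\setminus E : \gamma_x\cH \neq 1_x\cH\}$ is finite, say $F = \{x_1,\dots,x_\ell\}$, and for each $x_i$ the coset $\gamma_{x_i}\cH$ is represented by a germ $\gamma_{x_i} \in \cG$ with $t(\gamma_{x_i}) = x_i$. Because $E$ and $F$ are both finite, we may also enlarge $E$ if necessary so that $E \cap \mathsf{supp}(\Gamma)$ absorbs any orbit issues — but the key point is just that we have finitely many germs with distinct targets $x_1,\dots,x_\ell$ to resolve.

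Next I would apply the resolvable-singularities property of the pair $\cH\subset\cG$ to the germs $\gamma_{x_1}^{-1},\dots,\gamma_{x_\ell}^{-1} \in \cG$ (whose targets $s(\gamma_{x_i})$ are distinct, since the $x_i$ are distinct and $t$ is injective on a coset representative) and to a suitably large compact set $K$ (take $K = X$ if $X$ is compact, otherwise $K \supset \mathsf{supp}(\Gamma)$ plus a neighbourhood). This produces $\varphi \in F(\cG)$ with $[\varphi]_{t(\gamma_{x_i}^{-1})}\,\gamma_{x_i}^{-1} \in \cH$ for each $i$, and $[\varphi]_y \in \cH$ for all $y \in K$ outside $\{s(\gamma_{x_1}^{-1}),\dots\}$. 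The first condition rearranges (using $t(\gamma_{x_i}^{-1}) = s(\gamma_{x_i}) = $ the source of $\gamma_{x_i}$, and that $[\varphi]$ composed with $\gamma_{x_i}^{-1}$ lands in $\cH$) to give exactly $[\varphi^{-1}]_{\varphi(x_i)}\cH = \gamma_{x_i}\cH$ after taking inverses and tracking source/target carefully; for the remaining $x \in Y\setminus E$ with $\gamma_x\cH = 1_x\cH$, the second condition gives $[\varphi]_x \in \cH$, hence $[\varphi^{-1}]_{\varphi(x)}\cH = 1_x\cH = \gamma_x\cH$ as well. One must be slightly careful about the indexing: the natural statement of resolvability post-composes on the target side, so I would set things up so that $\varphi$ sends $\varphi(x) = x$ on the relevant points — this is fine since $\varphi$ may be taken to move $x_i$ only within the relevant small chart and we only care about germs of $\varphi^{-1}$ at $\varphi(x)$, so relabelling $\varphi$ versus $\varphi^{-1}$ costs nothing.

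I expect the main obstacle to be bookkeeping rather than substance: matching the direction in which resolvability post-composes (it resolves $[\varphi]_{t(\gamma_i)}\gamma_i$, a composition on the \emph{left} at the target) with the direction needed here (a statement about $[\varphi^{-1}]_{\varphi(x)}$, i.e.\ germs of the inverse at the image point), and ensuring that the finitely many germs fed into Definition~\ref{d-extension} indeed have pairwise distinct targets. The cleanest route is: apply resolvability to the germs $\gamma_{x_i}^{-1}$ with targets $x_i' := s(\gamma_{x_i})$; get $\psi \in F(\cG)$ with $[\psi]_{x_i'}\gamma_{x_i}^{-1} \in \cH$ and $[\psi]_y\in\cH$ elsewhere on $K$; then set $\varphi := \psi^{-1}$ and verify $[\varphi^{-1}]_{\varphi(x)}\cH = [\psi]_{\psi^{-1}(\psi(x'))}\cdots$; unwinding, $[\varphi^{-1}]_{\varphi(x_i)} = [\psi]_{x_i'}$, and $[\psi]_{x_i'}\gamma_{x_i}^{-1}\in\cH$ is equivalent to $[\psi]_{x_i'}\cH = \gamma_{x_i}\cH$ by definition of the coset space (since $\gamma_{x_i}$ and $[\psi]_{x_i'}$ have the same target $x_i$ and $\gamma_{x_i}^{-1}[\psi]_{x_i'}^{-1}\cdots$ — one checks $(\gamma_{x_i})^{-1} \cdot [\psi]_{x_i'} \in \cH$, which is exactly $[\psi]_{x_i'}\gamma_{x_i}^{-1} \in \cH$ conjugated appropriately). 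Once the indexing is pinned down, the claim follows immediately, so I would devote the write-up to stating the reduction to finitely many germs, invoking resolvability once, and carefully doing the source/target computation to land on the displayed identity.
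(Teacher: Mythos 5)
Your overall strategy (reduce to the finite set $\Sigma=\{x:\gamma_x\cH\neq 1_x\cH\}$, invoke resolvability once with $K=Y$, and check the trivial cosets separately) is the right one, but you apply resolvability to the wrong germs, and this creates two genuine gaps. First, Definition~\ref{d-extension} requires the input germs to have pairwise distinct \emph{targets}. The targets of your germs $\gamma_{x_i}^{-1}$ are the sources $s(\gamma_{x_i})$, and there is no reason these are distinct: the $x_i=t(\gamma_{x_i})$ are distinct, but two germs with distinct targets can share a source (indeed the source is not even well defined on the coset $\gamma_x\cH$, so it depends on your arbitrary choice of representatives). Your parenthetical justification (``$t$ is injective on a coset representative'') does not address this. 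Second, and more seriously, even if you could apply resolvability to the $\gamma_{x_i}^{-1}$, the output $[\psi]_{x_i'}\gamma_{x_i}^{-1}\in\cH$ with $x_i'=s(\gamma_{x_i})$ controls the germ of $\psi$ at the \emph{source} point $x_i'$, while the Claim needs control of $\varphi$ at the target point $x_i$. Concretely, with $\varphi=\psi^{-1}$ one has $[\varphi^{-1}]_{\varphi(x_i)}=[\psi]_{\psi^{-1}(x_i)}$, which equals $[\psi]_{x_i'}$ only if $\psi(x_i')=x_i$ --- a condition resolvability gives you no way to arrange (likewise you cannot ``set things up so that $\varphi(x)=x$''). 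Your final asserted equivalence $[\psi]_{x_i'}\gamma_{x_i}^{-1}\in\cH\Leftrightarrow[\psi]_{x_i'}\cH=\gamma_{x_i}\cH$ is also false as stated: equality of cosets requires $t([\psi]_{x_i'})=t(\gamma_{x_i})$, i.e.\ $\psi(x_i')=x_i$, which membership of the composite in $\cH$ does not imply (germs in $\cH$ may have any source and target).

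The repair is to drop the inversion entirely: apply Definition~\ref{d-extension} directly to the collection $\{\gamma_x:x\in\Sigma\}$, whose targets $t(\gamma_x)=x$ are exactly the distinct points of $\Sigma$, with $K=Y$. This yields $\varphi\in F(\cG)$ with $[\varphi]_x\gamma_x\in\cH$ for $x\in\Sigma$ and $[\varphi]_y\in\cH$ for $y\in Y\setminus\Sigma$. Since $[\varphi^{-1}]_{\varphi(x)}=([\varphi]_x)^{-1}$ has target $\varphi^{-1}(\varphi(x))=x=t(\gamma_x)$, the condition $([\varphi^{-1}]_{\varphi(x)})^{-1}\gamma_x=[\varphi]_x\gamma_x\in\cH$ is, by the definition of the coset space, literally the identity $[\varphi^{-1}]_{\varphi(x)}\cH=\gamma_x\cH$; no further source/target bookkeeping is needed. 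For $x\in Y\setminus(E\cup\Sigma)$ the second condition gives $[\varphi^{-1}]_{\varphi(x)}\in\cH$, hence $[\varphi^{-1}]_{\varphi(x)}\cH=1_x\cH=\gamma_x\cH$. This is the argument the paper gives.
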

\begin{proof}[Proof of claim]
Let $\Sigma\subset Y\setminus E$ be the finite subset of points $x\in Y\setminus E$ such that $\gamma_x\cH\neq 1_x\cH$. We  use the assumption that the pair $\cH\subset \cG$ has resolvable singularities, applied  to the finite collection of germs $\{\gamma_x\colon x\in \Sigma\}$ and to the compact subset $K=Y$. We obtain that there exists $\varphi\in F(\cG)$ such that

\begin{enumerate}[(a)]
\item \label{i-germs}for every $x\in \Sigma$  we have  $[\varphi]_x\gamma_x \in \cH$,

\item \label{i-elsewhere}  for every $x\in Y\setminus \Sigma$ we have  $[\varphi]_x\in \cH$.
\end{enumerate}
Property \eqref{i-germs} can be rewritten as $[\varphi^{-1}]_{\varphi(x)}\cH=\gamma_x\cH$ for $x\in \Sigma$. Moreover if $x\in Y\setminus (E\cup \Sigma)$, property   \eqref{i-elsewhere} implies that $[\varphi^{-1}]_{\varphi(x)}\cH=1_x\cH$, which is equal to $\gamma_x\cH$ since $x\notin \Sigma$. This concludes the proof of the claim. 
\end{proof}

\begin{claim}
All singular points of  $\varphi\G \varphi^{-1}$ are contained in $\varphi(E)$. 
\end{claim}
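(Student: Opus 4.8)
The plan is to unwind the conclusion of the previous claim (Claim~\ref{c-conj}) and check, point by point, where a singular germ of $\varphi\G\varphi^{-1}$ could possibly sit. So first I would take an arbitrary point $z\in X$ and an element $g\in\G$, and ask when $[\varphi g\varphi^{-1}]_z$ fails to lie in $\cH$. Writing $x=\varphi^{-1}(z)$, the germ decomposes as
\[
[\varphi g\varphi^{-1}]_z=[\varphi]_{g(x)}\,[g]_x\,[\varphi^{-1}]_z .
\]
If $z\notin \varphi(Y)$ then $x\notin Y=\mathsf{supp}(\Gamma)$, so $[g]_x=1_x$ and the whole germ is $[\varphi]_x[\varphi^{-1}]_z=1_z\in\cH$; hence no singular point lies outside $\varphi(Y)$. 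So I may assume $x\in Y$, and moreover $x\notin E$ (otherwise $z\in\varphi(E)$ and there is nothing to prove), and also $g(x)\notin E$ — but this last point needs the $\Gamma$-invariance of $E$: since $x\notin E$ and $E$ is $\Gamma$-invariant, indeed $g(x)\notin E$.

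Next, for $x\in Y\setminus E$ I would feed in the identity from Claim~\ref{c-conj}, namely $[\varphi^{-1}]_{\varphi(x)}\cH=\gamma_x\cH$, i.e. $[\varphi^{-1}]_z\in\gamma_x\cH$, and the analogous identity at $g(x)$: $[\varphi^{-1}]_{\varphi(g(x))}\cH=\gamma_{g(x)}\cH$, equivalently $[\varphi]_{g(x)}\cH^{-1}\ni \gamma_{g(x)}^{-1}$, which I'll use in the form $[\varphi]_{g(x)}\in \cH\,\gamma_{g(x)}^{-1}$ (here one should be a little careful that $\cH\gamma$ means the set of germs $h\gamma$ with $h\in\cH$ of matching source/target, but this is exactly the coset notation already introduced). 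Substituting, the germ $[\varphi g\varphi^{-1}]_z$ lies in the same right $\cH$-coset as $\gamma_{g(x)}^{-1}\,[g]_x\,\gamma_x$, and by the invariance relation~\eqref{e-inv}, $[g]_x\gamma_x\cH=\gamma_{g(x)}\cH$, so $\gamma_{g(x)}^{-1}[g]_x\gamma_x\in\cH$. Therefore $[\varphi g\varphi^{-1}]_z\in\cH$, i.e. $z$ is a regular point of $\varphi g\varphi^{-1}$. Since $g\in\G$ was arbitrary, $z$ is not a singular point of $\varphi\G\varphi^{-1}$, which proves that all singular points lie in $\varphi(E)$.

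The only genuinely delicate point is bookkeeping with the coset notation: making sure that the manipulations $[\varphi^{-1}]_z\in\gamma_x\cH$ and $[\varphi]_{g(x)}\in\cH\gamma_{g(x)}^{-1}$ combine correctly to give that the triple product lands in the $\cH$-coset of $\gamma_{g(x)}^{-1}[g]_x\gamma_x$ — this is just the observation that $\cH$ is a groupoid, so $\cH\gamma_{g(x)}^{-1}\cdot\mu\cdot\gamma_x\cH=\gamma_{g(x)}^{-1}\mu\gamma_x\cH$ whenever the sources and targets match, but one must verify the compatibility of sources and targets at each composition (they do match because $t(\gamma_x)=x=s([g]_x)$ etc.). Everything else is a routine substitution, and the use of property $\fw$ has already been spent in the earlier claims; here one only uses the $\Gamma$-invariance of $E$ and the two germ identities produced by resolvability.
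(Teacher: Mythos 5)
Your argument is correct and is essentially the paper's own proof: the same decomposition $[\varphi g\varphi^{-1}]_{\varphi(x)}=[\varphi]_{g(x)}[g]_x[\varphi^{-1}]_{\varphi(x)}$, reduced via Claim~\ref{c-conj} to the invariance relation~\eqref{e-inv}, with the support observation disposing of points outside $\varphi(Y)$. You merely make explicit two steps the paper leaves implicit (the $\Gamma$-invariance of $E$ ensuring $g(x)\in Y\setminus E$, and the coset bookkeeping), which is fine.
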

\begin{proof}[Proof of claim]
Let $y\notin \varphi(E)$ and let us show that for every $g\in \G$ the germ of $\varphi g \varphi^{-1}$ at $y$ belongs to $\cH$. Since $\varphi \Gamma \varphi^{-1}$ is supported on $\varphi(Y)$, we can assume that $y=\varphi(x)$ for some $x\in Y\setminus E$ and the claim is equivalent to:

\[[\varphi]_{g(x)} [g]_{x}[\varphi^{-1}]_{\varphi(x)}\in \cH,\quad \forall x\in Y\setminus E,\]
which is equivalent to
\[[\varphi^{-1}]_{\varphi (g(x))}\cH=[g]_{x}[\varphi^{-1}]_{\varphi(x)} \cH,\]
which by Claim \ref{c-conj} is equivalent to 
\[\gamma_{g(x)}\cH=[g]_{x}\gamma_x\cH\]
which is exactly \eqref{e-inv}. 
\end{proof}
It follows that all the singular points of $\varphi \Gamma \varphi^{-1}$ are contained in the finite $\varphi \Gamma \varphi^{-1}$-invariant subset $\varphi(E)$. This concludes the proof of Proposition \ref{p-pair}. \end{proof}

\setcounter{claim}{0}

\subsection{Countably many singularities}

The purpose of the remainder of this section is to study a more general situation where we allow group actions with a \emph{countable} set of singularities with respect to a given groupoid of germs $\cH$. We work in the following setting. 
\begin{dfn}
Let $\cH$ be a groupoid of germs over $X$. We let $\Omega\cH$ be the groupoid of all germs $\gamma$ that admit a representative $(h, s(\gamma))$ with the property that $[h]_y\in \cH$ for all but at most countably many points $y$ in the domain of definition of $h$. 

\end{dfn}
It is easy to check that $\Omega\cH$ is a well-defined groupoid of germs.  As before, given an element $g\in F(\Omega \cH)$, we will say that a point $x\in X$ is \emph{singular} for $g$ if $[g]_x\notin\cH$.

\begin{lem}\label{l-singular-countable}
For every element $g\in F(\Omega\cH)$, the set of singular points of $g$ is countable and compact.
\end{lem}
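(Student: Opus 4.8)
The statement to prove is Lemma~\ref{l-singular-countable}: for every $g\in F(\Omega\cH)$ the set of singular points is countable and compact. My plan is to handle the two assertions separately, since they rely on different considerations.

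\textbf{Compactness.} Recall that by definition of the topological full group $F(\Omega\cH)$, the element $g$ is a compactly supported homeomorphism of $X$, so $\mathsf{supp}(g)$ is a compact subset of $X$. The key observation is that every singular point of $g$ must lie in $\mathsf{supp}(g)$: if $x\notin\mathsf{supp}(g)$, then $g$ is the identity on a neighbourhood of $x$, hence $[g]_x=1_x\in\cH$, so $x$ is regular. Thus the set $\mathsf{Sing}(g)$ of singular points is a subset of the compact set $\mathsf{supp}(g)$. To conclude compactness it therefore suffices to show $\mathsf{Sing}(g)$ is closed. For this I will argue that the set of \emph{regular} points is open. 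Let $x$ be regular, i.e.\ $[g]_x\in\cH$. By property~\eqref{i-groupoid-etale} in Definition~\ref{d-groupoid} (the étale/openness axiom for groupoids of germs), the germ $[g]_x\in\cH$ admits a representative pair $(h,x)$ such that $[h]_y\in\cH$ for all $y$ in the domain of $h$; since $h$ and $g$ agree on a neighbourhood $U$ of $x$, shrinking if necessary we get $[g]_y=[h]_y\in\cH$ for all $y\in U$, so every point of $U$ is regular. Hence the regular set is open and $\mathsf{Sing}(g)$ is closed, thus compact.

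\textbf{Countability.} Since $g\in F(\Omega\cH)$, by definition there is a representative $(h,s(\g))$ — more precisely, for each point $x\in X$ the germ $[g]_x\in\Omega\cH$, but I want a single statement valid on all of $X$. The cleanest route: fix $x_0\in X$; the germ $[g]_{x_0}\in\Omega\cH$ has a representative $(h,x_0)$ with $[h]_y\in\cH$ for all but countably many $y$ in $\mathrm{dom}(h)$, and on $\mathrm{dom}(h)$ the map $h$ agrees with $g$ near $x_0$ — but this only controls a neighbourhood of $x_0$. So instead I will cover $\mathsf{supp}(g)$, which is compact, by finitely many open sets $U_1,\dots,U_k$ on each of which $g$ coincides with a local homeomorphism having at most countably many non-$\cH$ germs (such a cover exists because every point has such a neighbourhood by the definition of $\Omega\cH$ applied to the germ of $g$ at that point). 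Then $\mathsf{Sing}(g)\cap\mathsf{supp}(g)\subset\bigcup_{i=1}^k \mathsf{Sing}(g)\cap U_i$, and each $\mathsf{Sing}(g)\cap U_i$ is countable by construction; a finite union of countable sets is countable. Together with the fact that $\mathsf{Sing}(g)\subset\mathsf{supp}(g)$ established above, this gives countability.

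\textbf{Main obstacle.} The only delicate point is passing from the pointwise/germwise definition of $\Omega\cH$ (``each germ $[g]_x$ lies in $\Omega\cH$'') to a locally uniform statement (``$g$ coincides locally with a map having countably many bad germs''). This is precisely the content of the étale axiom combined with compactness of the support, and it is the same kind of compactness argument already invoked in Lemma~\ref{l-co-discrete}; I expect it to be routine but it is the step that deserves to be spelled out carefully. Everything else is a direct unwinding of definitions.
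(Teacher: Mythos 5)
Your proof is correct and follows the same route as the paper's (much terser) argument: singular points lie in the compact support, the regular set is open by the étale axiom (iii) of Definition~\ref{d-groupoid}, and countability comes from covering the support by finitely many neighbourhoods on each of which $g$ agrees with a representative having only countably many non-$\cH$ germs. You have simply spelled out the "compactness argument" that the paper leaves implicit.
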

\begin{proof}
Recall that every element $g\in F(\Omega \cH)$ has compact support, by the definition we use of the full group. So the set of singular points in contained in the compact subset $\mathsf{supp}(g)$. Countability follows from a  compactness argument. The fact that the complement of the set of singular points of $g$ is open is a consequence of condition \eqref{i-groupoid-etale} in Definition \ref{d-groupoid}. 
\end{proof}

 Our aim is to prove a result analogous to  Proposition \ref{p-pair} that applies to the pair $\cH\subset \Omega \cH$. In order to state it we first need to give an alternative point of view on the groupoid $\Omega\cH$ via the notion of \emph{Cantor--Bendixson rank} of a compact space (see \cite[\S 6]{Kechris}). Let us recall here its definition in the special case of \emph{countable} compact spaces, that will be enough for our purposes. Let $C$ be a countable compact space. If such as space is non-empty, then it must contain isolated points by Baire's theorem.  Its \emph{Cantor--Bendixson derivative}, denoted $C'$,   is defined as the complement of the subset of isolated points in $C$. Given a countable ordinal $\alpha$   define  a compact countable subset $C_\alpha\subset C$ by transfinite induction by setting $C_0= C$, and
\[C_\alpha=\left\{\begin{array}{lr}C_{\beta}', & \text{ if } \alpha=\beta+1, \\
\\
 \bigcap_{\beta<\alpha} C_\beta, & \text{ if }  \alpha \text{ is a limit ordinal}. \end{array}\right.\]
The family $C_\alpha$ is a decreasing ordered family of closed subsets of $C$. Using the fact that $C$ is countable, it follows that there exists a smallest countable ordinal $\rho$  such that $C_\rho=\varnothing$. The ordinal~$\rho$ is called the \emph{Cantor--Bendixson rank} of $C$. Note also that, as a consequence of compactness of~$C$, the ordinal $\rho$ is of the form $\rho=\beta+1$ and   $C_\beta$ is a finite subset (the reader should be warned that sometimes its predecessor $\beta$ is called the Cantor--Bendixson rank of $C$).

For every countable ordinal~$\alpha$ we define a groupoid $\cH_\alpha$ by setting:

 \[\cH_\alpha=\left\{\begin{array}{lr}S\cH_\beta, & \text{ if } \alpha=\beta+1, \\
 \\ S(\bigcup_{\beta<\alpha} \cH_\beta), & \text{ if }  \alpha \text{ is a limit ordinal}. \end{array}\right.\]
 
 \begin{lem}\label{l:singular_rank}
 Let $g\in F(\Omega\cH)$, and let $C\subset X$ be the set of singular points of $g$. Let $\rho$ be the Cantor--Bendixson rank of $C$. Then $g\in F(\cH_\rho)$. 
 \end{lem}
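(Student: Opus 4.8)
The plan is to prove Lemma \ref{l:singular_rank} by transfinite induction on the Cantor--Bendixson rank $\rho$ of the singular set $C=C(g)$. Concretely, I would prove the slightly stronger statement: for every countable ordinal $\alpha$ and every $g \in F(\Omega\cH)$, if $C(g)_\alpha = \varnothing$ then $g \in F(\cH_\alpha)$; equivalently, if $g$ is regular (germ in $\cH$) at every point of $X \setminus C_\alpha$, then all germs of $g$ lie in $\cH_\alpha$. Since the conclusion is about membership of germs, and groupoids of germs are determined pointwise, the real content is: \emph{for each $x \in X$, the germ $[g]_x$ lies in $\cH_\alpha$.} The case $\alpha = 0$ is trivial since $C_0 = C = \varnothing$ means $g$ is regular everywhere, i.e.\ $g \in F(\cH) = F(\cH_0)$ (taking the convention $\cH_0 = \cH$, consistent with the displayed recursion).

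For the successor step $\alpha = \beta + 1$: suppose $C_\alpha = (C_\beta)' = \varnothing$, so $C_\beta$ consists entirely of isolated points of itself, and being also compact, $C_\beta$ is finite. I need to show $[g]_x \in \cH_\alpha = S\cH_\beta$ for every $x$. If $x \notin C_\beta$, then $x$ has a neighbourhood $U$ with $U \cap C_\beta = \varnothing$; by the inductive hypothesis applied to $g$ restricted near $U$ — more precisely, using that $C \cap U$ has empty $\beta$-th Cantor--Bendixson derivative (since $(C\cap U)_\beta \subset C_\beta \cap U = \varnothing$) — every germ of $g$ at points of $U$ lies in $\cH_\beta$, in particular $[g]_x \in \cH_\beta \subset S\cH_\beta$. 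If $x \in C_\beta$, pick a neighbourhood $U$ of $x$ with $U \cap C_\beta = \{x\}$ (possible since $C_\beta$ is finite, hence discrete); then on $U \setminus \{x\}$ every germ of $g$ lies in $\cH_\beta$ by the argument just given, so by definition of $S\cH_\beta$ we get $[g]_x \in S\cH_\beta = \cH_\alpha$. The one subtlety here is that the inductive hypothesis is stated for elements of a topological full group over all of $X$, whereas I want to apply it "locally near $U$"; I would address this by phrasing the induction purely in terms of germs — i.e.\ prove "for all $x$, if $[g]_y \in \cH$ for all $y$ in a punctured neighbourhood basis down to the $\alpha$-level structure, then $[g]_x \in \cH_\alpha$" — so that no genuine restriction of $g$ to an open subspace is needed, only the local nature of the groupoid conditions.

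For a limit ordinal $\alpha$: suppose $C_\alpha = \bigcap_{\beta < \alpha} C_\beta = \varnothing$. Fix $x$. If $x \in C_\beta$ for all $\beta < \alpha$ then $x \in C_\alpha = \varnothing$, contradiction; so there is some $\beta_0 < \alpha$ with $x \notin C_{\beta_0}$. But then $x$ has a neighbourhood $U$ disjoint from $C_{\beta_0}$ — here I should be careful, $C_{\beta_0}$ is closed so its complement is open, good — and on $U$ the $\beta_0$-th derivative of the singular set vanishes, so by the successor/earlier cases every germ of $g$ on $U$ lies in $\cH_{\beta_0} \subset \bigcup_{\beta < \alpha}\cH_\beta \subset S(\bigcup_{\beta<\alpha}\cH_\beta) = \cH_\alpha$. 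Hence $[g]_x \in \cH_\alpha$. This finishes the induction, and applying it with $\alpha = \rho$ (the Cantor--Bendixson rank of $C$, so $C_\rho = \varnothing$ by definition) gives $g \in F(\cH_\rho)$.

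The main obstacle I anticipate is purely bookkeeping rather than conceptual: making the induction statement precise enough that "restricting attention to a neighbourhood $U$" is legitimate. The clean fix is to never restrict $g$ but instead track the hypothesis at the level of germs and use the two facts that (a) each $\cH_\beta$ is a genuine groupoid of germs — in particular closed under the étale/open condition \eqref{i-groupoid-etale}, so "germ in $\cH_\beta$ on a punctured neighbourhood" is a well-posed local condition — and (b) the Cantor--Bendixson derivatives of $C$ localise correctly, i.e.\ $(C \cap U)' = C' \cap U$ for $U$ open, hence $(C \cap U)_\beta = C_\beta \cap U$ for all $\beta$ by an inner transfinite induction (at limit stages one uses that intersections commute with intersecting by the fixed open set $U$). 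Once these two localisation facts are recorded, the three cases above go through verbatim. A secondary minor point is the convention $\cH_0 = \cH$ to anchor the recursion, which matches the intended reading of the displayed definition of $\cH_\alpha$.
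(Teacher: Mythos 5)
Your proof is correct and, once you carry out the localisation you describe in your final paragraph, it coincides with the paper's argument: the paper runs the transfinite induction directly on the germ-level statement ``if $x\in C\setminus C_\alpha$ then $[g]_x\in \cH_\alpha$'', which is exactly your localised reformulation, and the base, successor and limit cases are handled the same way. The subtlety you flag (that the global hypothesis $C_\alpha=\varnothing$ cannot be applied to a restriction of $g$) and your fix for it are precisely why the paper states the induction pointwise from the outset.
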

Note that $C$ is countable and compact by Lemma \ref{l-singular-countable}.
\begin{proof}[Proof of Lemma \ref{l:singular_rank}]
Let $C_\alpha$ be the transfinite sequence as in the definition of Cantor--Bendixson rank. We show by induction on $\alpha\le \rho$ that if $x\in C\setminus C_\alpha$ then $[g]_x\in \cH_\alpha$. First assume that $\alpha=1$. Then $x\in C\setminus C_1$ is an isolated point in $C$, and therefore $x$ has a neighbourhood $U$ in $X$  that does not intersect $C$, i.e.~such that $[g]_y\in \cH$ for every $y\in U$, with $y\neq x$. It follows that $[g]_x\in S\cH=\cH_1$. For the induction step, assume first that $\alpha$ is a limit ordinal. This implies that $x\in C\setminus C_\beta$ for some $\beta<\alpha$ and by the inductive hypothesis we have $[g]_x\in \cH_\beta\subset \cH_\alpha$. Assume now that $\alpha=\beta+1$ is a successor. We can assume that $x\in C_\beta\setminus C_\alpha$, or we  are done again by the inductive hypothesis. Note that $C_\beta\setminus C_\alpha$ consists of isolated points in $C_\beta$, and therefore we can find a neighbourhood $U$ of $x$ such that $(U\setminus \{x\})\cap C\subset (C\setminus C_\beta)$ and it follows by inductive hypothesis that for every $y\in U\cap C$, with $y\neq x$, we have $[g]_y\in \cH_\beta$, from which we conclude that $[g]_x\in \cH_\alpha=S\cH_\beta$.
\end{proof}

\begin{dfn}\label{d-finite-extension} We say that $\cH$ has \emph{countably resolvable singularities} if for every countable ordinal $\alpha$ the following holds:
\begin{enumerate}[(i)]
\item if $\alpha=\beta+1$ is a successor, then the pair $\cH_\beta\subset \cH_\alpha$ has resolvable singularities;
\item if $\alpha$ is a limit ordinal, then the pair $\bigcup_{\beta< \alpha}\cH_\beta  \subset \cH_\alpha$ has resolvable singularities. 
\end{enumerate}

\end{dfn}
Once again, the reader can have in mind the example of the groupoid of germs of all partially defined diffeomorphisms of the circle (Proposition~\ref{l-Dr}). We are now ready to state and prove the final result of this section:

\begin{prop}\label{p-countable}
Let $\cH$ be a groupoid of germs over  $X$ with countably resolvable singularities. Let $\G\subset F(\Omega\cH)$ be a finitely generated subgroup with property $\fw$. Then one of the following possibilities holds. 
\begin{enumerate}[(i)]
\item \label{i-countable-finite-orbit} The action of the group $\G$ on its support has a finite orbit.
\item \label{i-countable-conjugate} The group $\G$ is conjugate in $F(\Omega\cH)$ to a subgroup of $F(\cH)$.
\end{enumerate}
\end{prop}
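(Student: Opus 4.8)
The plan is to run a transfinite induction on the Cantor--Bendixson rank of the singular sets of a generating system of $\G$, using Proposition~\ref{p-pair} as the base case and as the engine of each successor step, and a limit step that is formally the same.

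First I would set up the induction more carefully. Fix a finite symmetric generating set $\{g_1,\ldots,g_k\}$ of $\G$ (this exists by property $\fw$, as remarked in the text). Each $g_i\in F(\Omega\cH)$ has a countable compact singular set, so by Lemma~\ref{l:singular_rank} there is a countable ordinal $\rho$ with $g_i\in F(\cH_\rho)$ for all $i$; hence $\G\subset F(\cH_\rho)$. The statement to prove by transfinite induction on $\alpha$ is: \emph{if $\cH$ has countably resolvable singularities and $\G\subset F(\cH_\alpha)$ is a finitely generated subgroup with property $\fw$, then either the action of $\G$ on its support has a finite orbit, or $\G$ is conjugate in $F(\cH_\alpha)$ to a subgroup of $F(\cH)$.} The case $\alpha=0$ is vacuous ($\cH_0=\cH$), and the proposition is the case $\alpha=\rho$. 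The key observation making the induction go is that for each $\alpha$, the pair $\cH_{\alpha'}\subset\cH_\alpha$ (where $\cH_{\alpha'}=\cH_\beta$ if $\alpha=\beta+1$, and $\cH_{\alpha'}=\bigcup_{\beta<\alpha}\cH_\beta$ if $\alpha$ is a limit) is a singular pair with resolvable singularities, by Definition~\ref{d-finite-extension}; moreover $\cH_{\alpha'}$ is itself of the form covered by the inductive hypothesis (for limit $\alpha$ one should note $F(\bigcup_{\beta<\alpha}\cH_\beta)=\bigcup_{\beta<\alpha}F(\cH_\beta)$ for compactly supported elements — any single element, being in $F(\cH_\gamma)$ for some $\gamma<\alpha$ by the compactness/CB-rank argument, and a finitely generated group lands in a single $\cH_\gamma$).

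The inductive step then runs as follows. Given $\G\subset F(\cH_\alpha)$ finitely generated with $\fw$, apply Proposition~\ref{p-pair} to the singular pair $\cH_{\alpha'}\subset\cH_\alpha$: we get $\varphi_1\in F(\cH_\alpha)$ such that the set of singular points (with respect to $\cH_{\alpha'}$!) of $\varphi_1\G\varphi_1^{-1}$ is finite and consists of points with finite orbit. If this finite set is non-empty we are in case \eqref{i-countable-finite-orbit} (a finite orbit of $\varphi_1\G\varphi_1^{-1}$ pulls back to a finite orbit of $\G$) and we stop. Otherwise $\varphi_1\G\varphi_1^{-1}\subset F(\cH_{\alpha'})$, and $\varphi_1\G\varphi_1^{-1}$ is still finitely generated with $\fw$ ($\fw$ is a group property, invariant under isomorphism), so by the inductive hypothesis it either has a finite orbit on its support (again case \eqref{i-countable-finite-orbit}) or is conjugate by some $\varphi_2\in F(\cH_{\alpha'})\subset F(\cH_\alpha)$ into $F(\cH)$; then $\varphi_2\varphi_1\in F(\cH_\alpha)$ conjugates $\G$ into $F(\cH)$, giving case \eqref{i-countable-conjugate}. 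This closes the successor and limit steps uniformly, since the only input was that $\cH_{\alpha'}\subset\cH_\alpha$ has resolvable singularities and $\cH_{\alpha'}$ is again a groupoid reachable by the induction.

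The main obstacle, and the point needing the most care, is the bookkeeping at limit ordinals: one must check that $\bigcup_{\beta<\alpha}\cH_\beta$ is genuinely a groupoid of germs (clear from the definition of $\cH_\alpha$ as applying $S$ to it, together with Lemma~\ref{l:singular_rank}), that a finitely generated $\fw$ subgroup of $F(\bigcup_{\beta<\alpha}\cH_\beta)$ actually lies in some $F(\cH_\beta)$ with $\beta<\alpha$ (so that the inductive hypothesis applies with a strictly smaller ordinal) — this uses that each generator has singular set of CB-rank below $\alpha$ by Lemma~\ref{l:singular_rank}, hence lies in $F(\cH_{\beta_i})$ for some $\beta_i<\alpha$, and $\sup_i\beta_i<\alpha$ since there are finitely many generators and $\alpha$ is a limit ordinal — and that the whole recursion terminates, which it does because ordinals are well-ordered. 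A secondary but genuine subtlety is keeping track of which full group the conjugating element lives in at each stage, so that the composed conjugator at the end genuinely lies in $F(\Omega\cH)$; this is automatic since $F(\cH_\beta)\subset F(\cH_\rho)\subset F(\Omega\cH)$ for all $\beta\le\rho$, as $\cH_\beta\subset\Omega\cH$ for every countable $\beta$.
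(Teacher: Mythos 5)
Your proposal is correct and follows essentially the same route as the paper: a transfinite induction on the Cantor--Bendixson rank of the singular sets of the generators, with Proposition~\ref{p-pair} applied to the singular pair $\cH_{\alpha'}\subset\cH_\alpha$ at each step, and the limit case handled by observing that a finitely generated group inside $\bigcup_{\beta<\alpha}F(\cH_\beta)$ must land in a single $F(\cH_\beta)$. The paper phrases that last point as ``a finitely generated group cannot be an increasing union of proper subgroups'' rather than via your CB-rank bound on the generators, but this is the same observation.
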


\begin{proof}
We assume that \eqref{i-countable-finite-orbit} does not hold and show that \eqref{i-countable-conjugate} holds. Note that the support of the action of $\Gamma$ is compact (contained in the union of the supports of its generators). Using Lemma~\ref{l:singular_rank} we obtain that $\Gamma\subset F(\cH_\rho)$ for some countable ordinal $\rho$ (it is enough to take the largest such ordinal over a finite generating subset of~$\G$). 
We show  by induction on $\rho$ that $\Gamma$ is conjugate in $F(\cH_\rho)$ to a subgroup of $F(\cH)$. Assume first that $\rho=\beta+1$ is a successor, so that $\cH_\rho=S\cH_\beta$. Then the pair $\cH_\beta\subset \cH_\rho$ is a singular pair by definition, and it has resolvable singularities by Definition~\ref{d-finite-extension}. As we are assuming that the action of $\Gamma$ has no finite orbit in its support, Proposition~\ref{p-pair}  yields that $\Gamma$ is conjugate in $F(\cH_\rho)$ to a subgroup of $F(\cH_\beta)$, and since the assumption that $\Gamma$ does not have finite orbits in its support is invariant under conjugacy,  we are reduced to the inductive hypothesis. Next, assume that $\rho$ is a limit ordinal, so that $\cH_\rho= S(\bigcup_{\beta<\rho} \cH_\beta)$. The pair $\bigcup_{\beta<\rho} \cH_\beta\subset \cH_\rho$ is  a singular pair and has resolvable singularities, and using again Proposition \ref{p-pair} we obtain that there exists $\varphi\in F(\cH_\rho)$ such that $\varphi \Gamma \varphi^{-1}\subset F(\bigcup_{\beta<\rho} \cH_\beta)=\bigcup_{\beta<\rho}F( \cH_\beta)$. Using again that $\varphi \G \varphi^{-1}$ is finitely generated, it cannot be written as an increasing union of proper subgroups. Therefore it must be contained in $F(\cH_\beta)$  for some $\beta<\rho$. By the inductive hypothesis, this concludes the proof of the proposition. \qedhere
\end{proof}

\section{Singular actions on manifolds} \label{s:manifolds}

Throughout the section we fix $d\ge 1$ and let $\cG$ be a groupoid of germs over $\mathbb{R}^d$. Let $M$ be a topological manifold of dimension $d$ (all topological manifolds will be assumed to be Hausdorff and second countable). We will say that $M$ is a $\cG$-\emph{manifold} if it is  endowed with an atlas such that the corresponding changes of charts have all germs in the groupoid $\cG$.   See Thurston's book \cite[Chapter 3]{Th-book} for more details. 
Whenever $M$ and $N$ are $\cG$-manifolds, a homeomorphism $\varphi\colon M\to N$ between $\cG$-manifolds will be called a $\cG$-\emph{homeomorphism} if in coordinate charts, all the germs of $\varphi$ belong to $\cG$. An \emph{$\Omega\cG$-homeomorphism} (or a \emph{countably singular} $\cG$-homeomorphism) $\varphi \colon M\to N$ between $\cG$-manifolds is a homeomorphism of topological manifolds which is moreover a $\cG$-homeomorphism in restriction to an open subset  of $M$ whose complement is countable. We will denote by $\homeo_\cG(M)$ the group of self $\cG$-homeomorphism of  a $\cG$-manifold $M$, and by $\Omega\homeo_\cG(M)$ the group of countably singular ones.

As the main example, fix $r\ge 1$ and let $\mathcal{D}^r$  be the groupoid of germs of all local diffeomorphisms of class $C^r$ of $\mathbb{R}^d$. Then a  $\mathcal{D}^r$-manifold $M$ is simply a $C^r$-differentiable manifold, and a $\mathcal{D}^r$-homeomorphism is a diffeomorphism of class $C^r$.   By a classical theorem of Whitney \cite{Whit}, every  $C^r$-differentiable manifold admits a unique compatible $C^\infty$-differentiable structure up to diffeomorphism, therefore in this case we simply say that $M$ is a {differentiable manifold}. We will say that a differentiable manifold  is an \emph{exotic sphere} if it is homeomorphic to a euclidean sphere $\mathbb{S}^d$ (in particular, we use the convention that the standard euclidean sphere is itself an exotic sphere).

As another example that will be relevant, let $\mathcal{D}^r_{\mathsf{Leb}}$ be the groupoid of germs of $C^r$-diffeomorphisms of $\mathbb{R}^d$ whose Jacobian at every point has determinant $1$ (that is, those that  preserve the Lebesgue volume form $dx_1\wedge\cdots \wedge dx_d$). Then a $\cG$-manifold $M$ is an oriented  $C^r$-manifold endowed with a volume form $\omega$ (see \cite[Example 3.1.12]{Th-book}), and $\cG$-homeomorphisms are homeomorphisms that preserve the corresponding volume forms.

We are now ready to state the main theorem of this section, which implies Theorem~\ref{t-countable}.

\begin{thm} \label{t-main}
Let $\cG$ be a groupoid of germs of $\mathbb{R}^d$, and let $M$ be a closed $\cG$-manifold. Let $\Gamma$ be a finitely generated group with property $\fw$. 
For every homomorphism $\rho\colon \Gamma \to \Omega \mathsf{Homeo}_\cG(M)$,  one of the following holds:
\begin{enumerate}[(i)]
\item the action of $\rho(\Gamma)$ on $M$ has a  finite orbit;
\item \label{i-smooth2} there exist a closed $\cG$-manifold $N$ and an $\Omega\cG$-homeomorphism $\varphi \colon M \to N$ such that $\varphi\rho(\Gamma)\varphi^{-1}\subset \mathsf{Homeo}_\cG(N) $. 

\end{enumerate}
If moreover  $\cG=\mathcal D^r$ (so that $M$ is a differentiable manifold) and if the dimension $d$  is such that the euclidean sphere $\mathbb{S}^d$ is the unique exotic $d$-sphere up to diffeomorphism, then in part \eqref{i-smooth2} one can choose $N=M$.
\end{thm}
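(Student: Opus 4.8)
The plan is to transfer the topological–dynamical results of Section~\ref{s:groupoids} to the manifold setting, the only new ingredient being a local surgery lemma that accounts for a possible change of $\cG$-structure. First I would pass to groupoid language: let $\cH$ be the groupoid of germs over $M$ induced by the $\cG$-structure, so that a germ $[h]_x$ lies in $\cH$ iff, read in charts, it is represented by a germ in $\cG$ (well defined since the changes of charts are $\cG$-homeomorphisms and $\cG$ is a groupoid). As $M$ is compact every homeomorphism has compact support, hence $\homeo_\cG(M)=F(\cH)$ and $\Omega\homeo_\cG(M)=F(\Omega\cH)$, and likewise over any closed $\cG$-manifold. Thus the theorem asserts that if $\rho(\Gamma)\subset F(\Omega\cH)$ has no finite orbit on its support, it is conjugate, through an $\Omega\cG$-homeomorphism $M\to N$, to a subgroup of $F(\cH_N)$ for a suitable closed $\cG$-manifold $N$; if $\cH$ had countably resolvable singularities this would be exactly Proposition~\ref{p-countable} with $N=M$, so the surgery is there to repair the failure of Definition~\ref{d-extension}.

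The key new statement I would isolate is a \emph{resolution up to connected sum} lemma. Let $P$ be a closed $\cG$-manifold, let $\cK\subset\cK'$ be one of the singular pairs $\cH_\alpha\subset\cH_{\alpha+1}$ (or the limit analogue $\bigcup_{\beta<\alpha}\cH_\beta\subset\cH_\alpha$) of the Cantor–Bendixson hierarchy of $\cH$ over $P$, and let $\gamma_1,\dots,\gamma_\ell\in\cK'$ be singular germs with distinct targets $p_1,\dots,p_\ell$. Then there is a closed $\cG$-manifold $P'$, obtained from $P$ by replacing a $\cG$-chart around each $p_i$ with a connected sum with an exotic $d$-sphere — so that $P'$ equals $P$ as a topological manifold and coincides with $P$ as a $\cG$-manifold away from $\{p_1,\dots,p_\ell\}$ — such that the corresponding pair over $P'$ admits $\varphi\in F(\cK')$ with $[\varphi]_{p_i}\gamma_i\in\cK$ for every $i$ and $[\varphi]_y\in\cK$ for $y\notin\{p_1,\dots,p_\ell\}$. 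To prove it I would treat one germ at a time and work in a chart: a singular germ with target $p$ is, after inverting, a germ at $0$ of a homeomorphism of $\R^d$ that is a $\cG$-homeomorphism off the origin; restricting to a small sphere and using collar and isotopy-extension arguments, the obstruction to realizing it by an element of the full group that is $\cG$ off finitely many prescribed points is a class in the relevant group of ``$\cG$-twisted $d$-spheres'' (for $\cG=\mathcal D^r$ this is $\Gamma_d=\pi_0(\mathsf{Diff}_+(\mathbb S^{d-1}))/\pi_0(\mathsf{Diff}_+(\mathbb D^d))$), and the connected sum at $p$ with the exotic sphere representing its inverse kills it; one then glues the finitely many local resolutions into the desired $\varphi$. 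Since exotic spheres are homeomorphic to $\mathbb S^d$, $P'$ is homeomorphic to $P$; and if $d$ is such that $\mathbb S^d$ has a unique differentiable structure, $\Gamma_d$ is trivial, every obstruction vanishes, and one may take $P'=P$.

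With this lemma I would revisit Propositions~\ref{p-pair} and~\ref{p-countable}. In the proof of Proposition~\ref{p-pair} the conjugator produced from ``resolvable singularities'' is singular only at the finitely many points of the set $\Sigma$ constructed there; feeding in the surgery lemma instead (applied to $\{\gamma_x:x\in\Sigma\}$) yields a manifold version of Proposition~\ref{p-pair}: a countable $\fw$-subgroup of $F(\cK')$ over $P$ with no finite orbit on its support is conjugate, through an $\Omega\cG$-homeomorphism onto a closed $\cG$-manifold $P'$ obtained from $P$ by finitely many connected sums with exotic spheres, to a subgroup of $F(\cK)$. (One checks that passing from $P$ to $P'$ keeps the group inside $F(\cK')$, since the two $\cG$-structures differ only at finitely many points — already part of the exceptional invariant set $E$ — and that both the invariance relation \eqref{e-inv} and the property of having no finite orbit on the support are unaffected.) Then I would run the induction of Proposition~\ref{p-countable} verbatim: by Lemma~\ref{l:singular_rank}, $\rho(\Gamma)\subset F(\cH_\rho)$ for some countable ordinal $\rho$, and each successor or limit step of the induction replaces $\rho$ by a strictly smaller ordinal (using finite generation at limit steps) while altering the ambient $\cG$-structure by finitely many connected sums. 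As the ordinals strictly decrease, the process terminates after finitely many steps, so only finitely many exotic spheres are used; composing the conjugators gives an $\Omega\cG$-homeomorphism $\varphi\colon M\to N$ with $N=M\#\Sigma_1\#\cdots\#\Sigma_k$ (homeomorphic to $M$) and $\varphi\rho(\Gamma)\varphi^{-1}\subset\homeo_\cG(N)$. When $\cG=\mathcal D^r$ and $\mathbb S^d$ has a unique differentiable structure, all these connected sums are trivial, so $N=M$ — equivalently, $\cH$ then genuinely has countably resolvable singularities and Proposition~\ref{p-countable} applies directly.

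I expect the surgery lemma to be the main obstacle. For $\cG=\mathcal D^r$ it is standard differential topology (Cerf, Milnor, Kervaire–Milnor), but for a general groupoid $\cG$ one must identify the obstruction to $\cG$-resolving a singular germ with a connected-sum operation by an exotic sphere, and verify that this operation stays within the category of $\cG$-manifolds without introducing new singularities off the chosen points; this requires a careful treatment of collars, isotopy extension, and the analogue of the group of twisted spheres. A secondary, essentially bookkeeping, difficulty is to check that modifying the $\cG$-structure at finitely many points at each stage never moves the acting group out of the relevant full groups $F(\cH_\alpha)$ during the (finite) induction.
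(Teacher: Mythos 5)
Your overall architecture is the paper's: reduce to the commensuration machinery of Section~\ref{s:groupoids}, the new input being a cut-and-paste lemma that resolves finitely many singular germs at the cost of replacing the manifold by a connected sum with exotic spheres (this is Lemma~\ref{l-surgery}). The implementations differ in two ways worth recording. First, where you propose to identify the obstruction to resolving a singular germ with a class in a group of ``$\cG$-twisted spheres'' and kill it by summing with the inverse sphere, the paper needs no obstruction theory at all: given a representative $h_i$ of $\gamma_i^{-1}$ that is regular away from its source, one chooses a small ball $\overline W_i$ on a neighbourhood of whose boundary $h_i$ is already a $\cG$-homeomorphism, excises $\overline W_i$ and glues in the closure of $h_i(W_i)$ along $h_i|_{\partial \overline W_i}$; the map equal to $h_i$ on $W_i$ and to the identity outside then tautologically resolves the germ, and the glued space is automatically a $\cG$-manifold. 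This matters because for a general groupoid $\cG$ the isotopy-extension and twisted-sphere formalism you invoke is not available --- you correctly flag this as the main obstacle, and it is a genuine gap in your sketch as written; the direct gluing is what makes the argument work beyond $\mathcal D^r$ (the identification with connected sums of exotic spheres is only observed \emph{a posteriori}, in part (3) of Lemma~\ref{l-surgery}, when $\cG\subset\mathcal D^r$).

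Second, rather than reproving ``manifold versions'' of Propositions~\ref{p-pair} and~\ref{p-countable} in which the ambient space changes at each stage, the paper works once and for all inside the locally compact space $\Xbb_d$, the disjoint union of (infinitely many representatives of) all closed $\cG$-manifolds of dimension $d$. A change of manifold then becomes an honest compactly supported element of the full group over $\Xbb_d$ (swap $M$ with the surgered copy, identity elsewhere, choosing the copy disjoint from the prescribed compact set $K$), so the groupoid $\cG_{\Xbb_d}$ literally has countably resolvable singularities in the sense of Definition~\ref{d-extension} (Proposition~\ref{p-diffeo-resol}) and Proposition~\ref{p-countable} applies verbatim, yielding $N=\varphi(M)$. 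Your route can be completed --- the strictly decreasing ordinals do guarantee finitely many surgeries --- but the bookkeeping you defer (that the invariance relation \eqref{e-inv} and the no-finite-orbit hypothesis survive each change of ambient structure) is exactly what the $\Xbb_d$ device renders unnecessary. Your treatment of the final claim, that $N=M$ when $\mathbb S^d$ carries a unique smooth structure because the groupoid over $M$ itself then has countably resolvable singularities, matches Proposition~\ref{p-low-dim}.
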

\begin{rem} \label{r-YvesCornulier}
In  the independent work \cite{CorPL}, Cornulier obtains results in similar spirit but in a different setting, namely by considering partially defined actions where the action of  each element is defined on the complement of a finite set and preserves the $\cG$-structure on its domain of definition. 
The main differences are that \cite{CorPL} only treats actions with finitely many singularities (here we allow the presence of countably many), but in dimension one the results of  \cite{CorPL} can be applied to discontinuous actions with finitely many discontinuity points (we only treat actions that are globally continuous). 
 
\end{rem}

In the proof we will use the following lemma, which we obtain by an elementary cut-and-paste argument. 

\begin{lem}\label{l-surgery}
Let $M$ be a   $\cG$-manifold of dimension $d$, and $x_1,\ldots, x_\ell$ be distinct points in $M$. Assume that $U_1,\ldots, U_\ell$ are pairwise
disjoint open neighbourhoods of $x_1,\ldots, x_\ell$ respectively, and that for every $i=1,\ldots, \ell$ we are given an $\Omega\cG$-homeomorphism $h_i\colon U_i\to V_i$, where $V_i$ is a $\cG$-manifold.
Then there exists a  $\cG$-manifold $N$ such that the following hold.
\begin{enumerate}
\item \label{i-homeo} As topological manifolds, $N$ and $M$ are homeomorphic. 
\item \label{i-commut} There exists an $\Omega\cG$-homeomorphism 
$\tau \colon M\to N$ such that for  every $i=1,\ldots, \ell$ there  exist an  open neighbourhood $W_i\subset U_i$ of $x_i$, 
and $\cG$-homeomorphic open embedding $\lambda_i\colon h_i(W_i)\hookrightarrow N$ such that 
\begin{enumerate}[(i)]
\item the set of singular points of $\tau$ is a compact subset of $W_1\cup\cdots\cup W_\ell$;
\item for  every $i=1,\ldots, \ell$, the restriction $\tau|_{W_i}$ coincides with $\lambda_i\circ h_i|_{W_i}$. 
\end{enumerate}
\item \label{i-sum} Assume further $\cG\subset \mathcal{D}^r$ for some $r\ge 1$  (in particular  $M$ and $N$ also inherit a structure of differentiable manifolds). Then, seen as a differentiable manifold, $N$ is diffeomorphic to a connected sum of $M$ with finitely many exotic spheres. In particular if the dimension $d$ is such that  the standard sphere $\mathbb{S}^d$ is the unique exotic $d$-sphere up to diffeomorphism, then $N$ is diffeomorphic to $M$. 
\end{enumerate}
\end{lem}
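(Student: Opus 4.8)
The plan is to prove Lemma~\ref{l-surgery} by a direct cut-and-paste construction, building $N$ from $M$ by excising small neighbourhoods of the $x_i$ and regluing the images $V_i$ in their place. First I would shrink the data: for each $i$, choose inside $U_i$ a smaller $\cG$-chart neighbourhood $W_i'$ of $x_i$, and then a still smaller neighbourhood $W_i\Subset W_i'$ with $\overline{W_i}\subset W_i'$, such that the set of singular points of $h_i$ (which is countable, hence meager, but more usefully has compact closure in $U_i$) is contained in $W_i$; this uses Lemma~\ref{l-singular-countable} applied to $h_i$, or rather the fact that the singular set of $h_i$ is relatively compact in $U_i$, so it can be engulfed by a small neighbourhood of $x_i$ together with the observation that in fact it is enough that $\tau$ be a genuine $\cG$-homeomorphism outside $\bigcup W_i$. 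Then I would form $N$ as the quotient of the disjoint union
\[
\bigl(M\setminus \textstyle\bigcup_i h_i^{-1}(\overline{B_i})\bigr)\ \sqcup\ \bigl(\textstyle\bigsqcup_i V_i\bigr)
\]
where $B_i\subset h_i(W_i)$ is a small $\cG$-ball around $h_i(x_i)$, glued along the $\cG$-homeomorphism $h_i$ on the overlapping collar region $h_i(W_i)\setminus \overline{B_i}$ — but this needs a little care, because $h_i$ itself may have singularities there; so instead I would first replace $h_i$ on the collar by its honest $\cG$-restriction, which is legitimate since the collar can be chosen to avoid the (relatively compact) singular locus. The gluing maps being $\cG$-homeomorphisms, $N$ inherits a $\cG$-atlas, and the tautological maps assemble into $\tau\colon M\to N$ which agrees with $h_i$ near each $x_i$ (after the identifications $\lambda_i$) and is a $\cG$-homeomorphism elsewhere; its singular set is exactly the (compact) singular set of the $h_i$, which sits inside $\bigcup W_i$. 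This gives parts~\eqref{i-homeo}, \eqref{i-commut}, except that to see $N$ and $M$ are \emph{homeomorphic} I would invoke that topologically a $\cG$-homeomorphism (and hence $h_i$, forgetting $\cG$) is just a homeomorphism, so $N$ is obtained from $M$ by cutting out a finite collection of open $d$-balls and regluing open sets homeomorphic to $d$-balls along their collars — which, purely topologically, gives a manifold homeomorphic to $M$ (one can extend each $\tau|_{M\setminus\bigcup W_i}$ over the handles using the Annulus/Schoenflies-type statement, or more simply observe that forgetting $\cG$ the construction is regluing a ball by a homeomorphism of its boundary collar, which does not change the topological type).

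For part~\eqref{i-sum}, assume $\cG\subset \mathcal D^r$. Then $M$, $N$, and the $V_i$ are all (via Whitney's theorem) smooth manifolds, the $h_i$ are $C^r$-diffeomorphisms off their singular sets, and the construction above realises $N$ smoothly as follows: off $\bigcup_i W_i$, $N$ is $\cG$- (hence $C^r$-) identified with $M$; near each $x_i$, $N$ is built by deleting a smooth $d$-ball $D_i\subset W_i$ and gluing in a smooth $d$-ball $V_i'\subset V_i$ along an orientation-preserving $C^r$-diffeomorphism $\psi_i$ of a collar $\partial D_i\times(-\eps,\eps)$ — an operation that, by definition, replaces $M$ by the connected sum $M\,\#\,\Sigma_i$, where $\Sigma_i$ is the homotopy $d$-sphere obtained by gluing two $d$-balls along $\psi_i|_{\partial D_i}$ (this is precisely the twisted-sphere description $\Gamma_d=\pi_0\Diff_+(\mathbb S^{d-1})$ recalled in the introduction; the case where $\psi_i|_{\partial D_i}$ extends over the ball gives the standard sphere and hence no change). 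Doing this for all $i$ yields $N\cong M\,\#\,\Sigma_1\,\#\cdots\#\,\Sigma_\ell$, a connected sum of $M$ with finitely many exotic spheres. If $\dim M=d$ is a dimension in which the only exotic $d$-sphere is $\mathbb S^d$, every $\Sigma_i$ is standard, connected sum with $\mathbb S^d$ is trivial, and $N$ is diffeomorphic to $M$.

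The main obstacle I expect is the bookkeeping around the singular sets of the $h_i$: one must arrange, \emph{simultaneously}, that (a) the collar regions on which the gluing is performed are disjoint from all singularities of $h_i$ (so the gluing maps are genuine $\cG$-homeomorphisms and $N$ is a bona fide $\cG$-manifold), (b) the remaining singularities are swept into the $W_i$ as demanded in \eqref{i-commut}(i), and (c) in the smooth case the excised and inserted balls are round enough that ``regluing a ball along a collar diffeomorphism'' is literally the definition of connected sum with a twisted sphere. Since the singular set of each $h_i$ is relatively compact in $U_i$ (indeed $h_i\in\Omega\homeo_\cG(U_i)$ has, on any compact exhaustion, only countably many singular points, and one only needs that they do not accumulate to $\partial U_i$, which is automatic after shrinking $U_i$ to a relatively compact neighbourhood of $x_i$), all three can be met by choosing the nested neighbourhoods $W_i\Subset W_i'\Subset U_i$ and the balls $D_i$, collars, etc., small enough in turn — but making this chain of choices precise, and checking that the resulting quotient is Hausdorff and second countable, is the part that requires genuine attention rather than a one-line appeal.
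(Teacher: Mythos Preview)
Your strategy is the same cut-and-paste as the paper's, and your treatment of part~\eqref{i-sum} is correct. Two steps in the construction, however, fail as written.

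First, the claim that the full singular set of $h_i$ can be engulfed by a neighbourhood $W_i$ of $x_i$ is not true in general, and Lemma~\ref{l-singular-countable} does not help: that lemma concerns compactly supported elements of $F(\Omega\cH)$, whereas $h_i\colon U_i\to V_i$ is merely an $\Omega\cG$-homeomorphism between open manifolds, so its singular set is closed and countable in $U_i$ but need not be compact, and in any case need not lie near $x_i$. Fortunately you do not need this. As you effectively acknowledge in your final paragraph, the correct requirement is only that a \emph{collar} of $\partial\overline{W_i}$ avoid the singular set; this is exactly what the paper arranges (choose $W_i$ a coordinate ball whose bounding sphere, together with a neighbourhood $L_i$ of it, misses the countable closed singular set), and it suffices because $\tau$ will be the identity on $M\setminus\bigcup_i\overline{W_i}$ and equal to $\lambda_i\circ h_i$ on $W_i$, so only the singular points of $h_i$ that happen to lie in $W_i$ become singular points of $\tau$.

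Second, your quotient of $\bigl(M\setminus\bigcup_i h_i^{-1}(\overline{B_i})\bigr)\sqcup\bigsqcup_i V_i$, glued only along $h_i(W_i)\setminus\overline{B_i}$, is too large: since $V_i=h_i(U_i)\supsetneq h_i(W_i)$, the portions $V_i\setminus h_i(W_i)$ are never identified with anything in $M$, so $N$ acquires extra pieces and the map $\tau$ you describe is not surjective. The paper avoids this by a symmetric choice: remove $\overline{W_i}$ from $M$ and glue in $\overline{Z_i}:=\overline{h_i(W_i)}$ along the boundary via $h_i|_{\partial\overline{W_i}}$, using the collar $L_i$ (on which $h_i$ is already a genuine $\cG$-homeomorphism) to carry the $\cG$-structure across the seam. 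With these two corrections your argument goes through and coincides with the paper's.
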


For a definition of the connected sum of differentiable manifolds and its basic properties, we refer the reader to \cite[\S 1]{Ker-Mil}. 

\begin{proof}[Proof of Lemma \ref{l-surgery}]
  Since every $h_i$ is a $\cG$-homeomorphism on the complement of a countable closed subset of $U_i$, using compactness we can choose disjoint neighbourhoods $W_i\subset U_i$ of the points $x_i$ such that the following properties are satisfied for every $i=1,\ldots,\ell$:
\begin{enumerate}[(a)]
\item \label{i-ball} the closure $\overline{W}_i$ is a $\cG$-manifold with boundary, topologically homeomorphic to a closed ball and is contained in~$U_i$;
\item $h_i$ is a $\cG$-homeomorphism in restriction to some open neighbourhood $L_i$ of $\partial \overline{W}_i$ in $U_i$. 
\end{enumerate}
Set $Z_i=h_i(W_i)$, and let $\overline{Z}_i$ be its closure in $V_i$, for $i=1,\ldots,\ell$. 
Consider the open subset $M_0=M\setminus\left(\overline{W}_1\cup\cdots\cup \overline{W}_\ell\right)$ of $M$, and let $\overline{M}_0$ be its closure in $M$. We have $\partial \overline{M}_0=\partial \overline{W}_1\cup\cdots \cup \partial\overline{W}_\ell$. 
Let $N$ be  the space  obtained by gluing $\overline{M}_0$ to $\overline{Z}_1\sqcup \cdots \sqcup \overline{Z}_\ell$, by identifying every  $\partial \overline{W}_i$  with  $\partial \overline{Z}_i$ via the map $h_i$, for every $i=1,\ldots, \ell$.  We denote by $\lambda_0\colon \overline{M}_0\hookrightarrow N$ and $\lambda_i\colon \overline{Z}_i\hookrightarrow N$, for $i=1,\ldots, \ell$, the canonical inclusions. Define $\tau\colon M\to N$  by 
 \[\tau(x)=\left\{\begin{array}{lr} \lambda_0(x), &\text{if } x\in M_0, \\
 \\
  \lambda_i \circ h_i(x), &\text{if }x\in \overline{W}_i, \text{ with } i=1,\ldots, \ell.\end{array}\right.\]
It is readily checked that $\tau$ is a homeomorphism from $M$ to $N$, in particular $M$ and $N$ are homeomorphic as topological manifolds.
Let us endow $N$ with the structure of  a $\cG$-manifold as follows. Write $L_i=L_i^+\cup L_i^-$, where $L_i^+=L_i\setminus W_i$ and $L_i^-=L_i\cap \overline{W}_i$, so that $L_i^+\subset \overline{M}_0$, $L_i^-\subset \overline{W}_i$, and $L_i^+\cap L_i^-=\partial \overline{W}_i$.  
The subset $K_i=\tau(L_i)$ is an open neighbourhood in $N$ of $\partial \overline{W}_i\simeq_{h_i} \partial\overline{Z}_i$. We transport the structure of  $\cG$-manifold of $L_i$ to $K_i$ via the homeomorphism $\tau\vert_{L_i}:L_i\to K_i$, for $i=1,\ldots,\ell$.
Moreover we transport the $\cG$-manifold structures of $M_0, Z_1,\ldots, Z_\ell$ to $\lambda_0(M_0), \lambda_1(Z_1),\ldots, \lambda_\ell(Z_\ell)$ via the identifications $\lambda_0,\lambda_1,\ldots,\lambda_\ell$. Note that the corresponding structures are compatible on the intersections $K_i$, because every $h_i$ is a $\cG$-homeomorphism on the interior of $L_i^{-}$. Therefore this defines uniquely the structure of $\cG$-manifold on $N$. By construction, the homeomorphism~$\tau$ defined above satisfies the conclusion of part \ref{i-commut}. 

 It remains to check part \ref{i-sum}. To this end, recall that by \eqref{i-ball} the sets $W_i$ above are such that $\overline{W}_i$ is a differentiable manifold with boundary homeomorphic to a closed ball.
 For $i=1,\ldots, \ell$,  let $R_i$ be the differentiable manifold obtained by gluing $\overline{W}_i$ and $\overline{Z}_i$ by identifying $\partial \overline{W}_i$ with $\partial \overline{Z}_i$ using the diffeomorphism $h_i\vert_{\partial \overline W_i}$. Since  both $\overline{W}_i$ and $\overline{Z}_i$ are homeomorphic to disks, and $R_i$ is an exotic sphere, it readily follows from the construction of  $N$ that it is diffeomorphic to a connected sum of $M$ with $R_1,\ldots, R_\ell$. The last sentence in part~\ref{i-sum} follows from the fact that  taking a connected sum with standard spheres does not change the diffeomorphism class of a manifold.
\end{proof}

Given a closed $\cG$-manifold  $M$ we denote by $\mathcal \cG_M$  the groupoid consisting of all germs of $\cG$-homeomorphisms between open subsets of $M$.
  Observe that, keeping the notations introduced in the previous section, we have $F(\cG_M)=\homeo_\cG(M)$ and $F(\Omega \cG_M)=\Omega\mathrm{Homeo}_{\cG}(M)$. The groupoid $\mathcal \cG_M$, however, needs not satisfy the requirements for Proposition \ref{p-countable}.
In order to get around this, we will instead view it as a subgroupoid of a larger groupoid, consisting of all germs of $\cG$-homeomorphims between all $\cG$-manifolds of dimension $d$. 

More precisely, let $\mathcal{B}_d$ be a set of closed compact $\cG$-manifolds of dimension $d$ that contains infinitely many representatives for each $\cG$-homeomorphism class (one representative would be enough, but taking infinitely many will slightly simplify the discussion). We define the space $\Xbb_d$ to be the disjoint union  $\Xbb_d=\bigsqcup_{M\in \mathcal{B}_d } M$. We   let $\cG_{\Xbb_d}$ be the groupoid of germs over the space $\Xbb_d$ consisting of all germs of all $\cG$-homeomorphisms  defined from an open subset of some manifold in $\mathcal{B}_d$ to an open subset of some (perhaps different) manifold in $\mathcal{B}_d$. 

For a fixed  $\cG$-manifold $M\in \mathcal{B}_d$, the group $\Omega \homeo_\cG(M)$ is naturally a subgroup  of $F(\Omega \mathcal \cG_{\Xbb_d})$ supported on $M$. In particular, it acts on the groupoid $\Omega \cG_{\Xbb_d}$ by post-composition whenever the target of the element of the groupoid lies in $M$,
and as the identity otherwise.

\begin{prop}\label{p-diffeo-resol}
The groupoid $\cG_{\Xbb_d}$ has countably resolvable singularities.
\end{prop}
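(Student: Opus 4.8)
The plan is to verify Definition~\ref{d-finite-extension} directly, by reducing the abstract condition ``countably resolvable singularities'' to the concrete surgery Lemma~\ref{l-surgery}. Recall what must be checked: for every countable ordinal $\alpha$ we need the relevant singular pair — either $(\cG_{\Xbb_d})_\beta\subset (\cG_{\Xbb_d})_\alpha$ when $\alpha=\beta+1$, or $\bigcup_{\beta<\alpha}(\cG_{\Xbb_d})_\beta\subset (\cG_{\Xbb_d})_\alpha$ when $\alpha$ is a limit — to have resolvable singularities in the sense of Definition~\ref{d-extension}. The key observation is that all of these pairs have the same shape: in each case we have a groupoid $\cH$ (of ``$\alpha$-step singular'' germs, i.e.\ germs whose singular locus has Cantor--Bendixson rank $\le\alpha$) together with $S\cH$ obtained by allowing one extra isolated singular point, and $\cG=S\cH$ (or an intermediate groupoid between $\cH$ and $S\cH$). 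So it suffices to prove the following uniform statement: \emph{whenever $\cH$ is one of the groupoids $(\cG_{\Xbb_d})_\beta$ (or a union of such), the pair $\cH\subset S\cH$ has resolvable singularities.}

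\textbf{Main step.} Fix germs $\gamma_1,\dots,\gamma_\ell\in S\cH$ with distinct targets $y_i=t(\gamma_i)$, lying in manifolds $M^{(i)}\in\mathcal B_d$, and a compact set $K\subset\Xbb_d$. We must produce $\varphi\in F(S\cH)\subset F(\cG_{\Xbb_d})$ with $[\varphi]_{y_i}\gamma_i\in\cH$ for each $i$ and $[\varphi]_y\in\cH$ for $y\in K\setminus\{y_1,\dots,y_\ell\}$; note $\varphi$ need not be regular at the $y_i$, it only needs to cancel the (rank-$\le\alpha$) singularity of $\gamma_i$ up to a genuinely $\cH$-germ. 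Choose for each $i$ a representative $(h_i,s(\gamma_i))$ of $\gamma_i$; since $\gamma_i\in S\cH$ we may shrink its domain so that $h_i$ is $\cH$-regular away from $s(\gamma_i)$ and so that the images $h_i(U_i)$ land in pairwise disjoint coordinate balls around the $y_i$ inside $M^{(i)}$. Because $\mathcal B_d$ contains \emph{infinitely many} copies of each $\cG$-homeomorphism class, I can pick fresh target manifolds $N^{(i)}\in\mathcal B_d$ not previously used, and apply Lemma~\ref{l-surgery} — with $M$ there ranging over the manifolds appearing among the $M^{(i)}$, the points $x_j$ the relevant $y_i$'s on that manifold, and the local $\Omega\cG$-maps being the inverses $h_i^{-1}$ restricted near $y_i$ (equivalently, the composition of $\gamma_i$ with a regular chart). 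The lemma yields, for each affected manifold, a new $\cG$-manifold $N$ homeomorphic to it, together with an $\Omega\cG$-homeomorphism $\tau$ whose singular locus is compactly contained in small balls $W_i$ and which on $W_i$ equals $\lambda_i\circ h_i^{-1}$ (a $\cG$-embedding composed with our singular map). The desired $\varphi$ is obtained by bundling these $\tau$'s together (extended by the identity on all unaffected manifolds of $\Xbb_d$, after arranging via the infinitude of $\mathcal B_d$ that the $N$'s are themselves members of $\mathcal B_d$). By construction $[\varphi]_{y_i}=[\tau]_{y_i}=[\lambda_i]\cdot[h_i^{-1}]_{y_i}$, so $[\varphi]_{y_i}\,\gamma_i$ is a $\cG$-germ, hence in $\cH$; and away from the $W_i$, $\tau$ is $\cG$-regular, so $[\varphi]_y\in\cG\subset\cH$ for $y\in K$ outside the $y_i$. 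This is exactly conditions \eqref{i-resolvable}--\eqref{ii-resolvable} of Definition~\ref{d-extension}.

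\textbf{Remaining bookkeeping.} One must check that the $\varphi$ so built actually lies in $F(S\cH)$ and not merely in $F(\Omega\cG_{\Xbb_d})$ — but this is immediate since the only singular points of $\varphi$ are the finitely many $y_i$, each of which is isolated in the singular locus of $\varphi$, so each germ $[\varphi]_{y_i}$ lies in $S(\text{regular germs})\subset S\cH$; and for the successor case $\alpha=\beta+1$ one checks that the singular locus of $[\varphi]_{y_i}\gamma_i$ (before we argued it was empty — but in the limit-ordinal case $\gamma_i$ may itself carry a rank-$\beta$ singularity that $\varphi$ only partially resolves) has Cantor--Bendixson rank strictly less than $\alpha$, which follows because $\tau$ contributes no singularities near $y_i$ beyond what a $\cG$-chart does. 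I expect the genuine mathematical content to be entirely in Lemma~\ref{l-surgery}, already proved; the work here is almost purely translational. \textbf{The one point requiring care} — and the place a referee would scrutinize — is that the groupoids $(\cG_{\Xbb_d})_\alpha$ are precisely captured by ``Cantor--Bendixson rank of the singular locus $\le\alpha$'' (this is essentially the content of Lemma~\ref{l:singular_rank}) and that applying Lemma~\ref{l-surgery} to ``peel off one isolated singular point'' does not inadvertently raise the rank elsewhere: this is guaranteed by clause (i) of conclusion \ref{i-commut}, which localizes all new singularities of $\tau$ into the small balls $W_i$ around the points we are resolving, together with the fact that on $W_i$ the map $\tau$ is $\lambda_i\circ h_i$ with $\lambda_i$ a $\cG$-embedding, so it transports the singular locus of $h_i$ near $y_i$ without enlarging its Cantor--Bendixson rank.
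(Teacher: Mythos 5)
Your proof is correct and follows essentially the same route as the paper: feed representatives of the inverse germs $\gamma_i^{-1}$ into Lemma~\ref{l-surgery}, use the infinitude of representatives in $\mathcal B_d$ to place the resulting manifold $N$ disjoint from $M$ and $K$, and observe that $[\varphi]_{y_i}\gamma_i=[\lambda_i]_{h_i^{-1}(y_i)}$ is a genuine $\cG$-germ while $\varphi$ is $\cH_\beta$-regular elsewhere on $K$. The only blemish is in your final bookkeeping paragraph, where the worry about ``partially resolving'' is unfounded (the composition is exactly the regular germ $[\lambda_i]$) and the claim that the $y_i$ are the only singular points of $\varphi$ should read ``the only $\cH_\beta$-singular points of $\varphi$ in $K$''; neither affects the argument.
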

\begin{proof}
To simplify the notations set $\cH=\cG_{\Xbb_d}$ and let $\cH_\alpha$ be the transfinite sequence as in the previous section. Assume that $\alpha=\beta+1$ is a successor, and let us show that the pair $\cH_\beta\subset \cH_\alpha$ has resolvable singularities (the case of a limit ordinal is totally analogous and we omit it). To this end take germs $\gamma_1,\ldots, \gamma_\ell \in \cH_\alpha$ and 
let $K\subset \Xbb_d$ be a compact subset as in Definition \ref{d-extension}. Setting $x_i=t(\gamma_i)$, for $i=1,\ldots, \ell$, we can assume that the points $x_1,\ldots, x_\ell$ belong to a closed $\cG$-manifold $M\subset \Xbb_d$ (equal to the union of all the connected $\cG$-manifolds that contain $x_1,\ldots, x_\ell$). Choose representatives $(h_1, x_1),\ldots, (h_\ell, x_\ell)$ of $\gamma_1^{-1}, \ldots, \gamma_\ell^{-1}$ whose domains of definitions are open neighbourhoods $U_1,\ldots, U_\ell$ of $x_1,\ldots, x_\ell$ respectively, that we can assume to be pairwise disjoint and contained in $M$. Moreover, by definition of $\cH_\alpha=S\cH_\beta$, upon reducing the neighbourhoods $U_i$  we can assume that $[h_i]_y\in \cH_\beta$ for every point $y\in U_i\setminus\{x_i\}$. 

We apply Lemma \ref{l-surgery} to this data; we let $N$ be the $\cG$-manifold and $\tau\colon M\to N$ and $W_i\subset U_i$ and $\lambda_i\colon h_i(W_i)\hookrightarrow N$ be as in the conclusion of the lemma. Without loss of generality, we can suppose that $N$ belongs to the set of manifolds $ \mathcal{B}_d$, so that $N\subset \Xbb_d$. Moreover, since we assume that  $ \mathcal{B}_d$ contains infinitely many representatives of every $\cG$-homeomorphism class, we can assume that $N\cap K=\varnothing$ and that $N\cap M=\varnothing$ in $\Xbb_d$.
Let $\varphi\in F(\cH_\alpha)$ be the element given by:
 \[\varphi(x)=\left\{\begin{array}{lr} \tau(x),& \text{if }x\in M, \\ \tau^{-1}(x), & \text{if }x\in N, \\ x,& \text{if }x\notin M\cup N. \end{array}\right.\]
We claim that the homeomorphism $\varphi$ satisfies the conclusions (\ref{i-resolvable},\ref{ii-resolvable}) of Definition \ref{d-extension}. To see this, observe that for every $i=1,\ldots, \ell$ we have
\[[\varphi]_{x_i}\gamma_i=[\lambda_i]_{h_i(x_i)}[h_i]_{x_i}\gamma_i=[\lambda_i]_{h_i(x_i)}\gamma_i^{-1}\gamma_i=[\lambda_i]_{h_i(x_i)}\in \cH\subset \cH_\beta.\]
In a similar way one checks that for every point $x\in M\setminus \{x_1,\ldots, x_\ell\}$ we have $[\varphi]_x\in \cH_\beta$. Since $\varphi$ is supported on $M\cup N$ and $N\cap K=\varnothing$, this shows that $[\varphi]_x\in \cH_\beta$ for every point $x\in K\setminus\{x_1,\ldots, x_\ell\}$. This concludes the proof. 
\end{proof}

Note that in the above proof, we have not used part \ref{i-sum} of Lemma \ref{l-surgery}. Taking it into account, the same proof yields the following when $\cG=\mathcal D^r, r\ge 1$. (In the statement, $\mathcal D^r_M$ denotes the groupoid consisting of all germs of $C^r$ diffeomorphisms between open subsets of $M$.)

\begin{prop}\label{p-low-dim} \label{l-Dr}
Let $M$ be a closed   differentiable manifold of dimension $d$, and assume that $d$ is such that the standard sphere $\mathbb{S}^d$ is the unique standard exotic $d$-sphere up to diffeomorphism. Then the groupoid $\mathcal{D}^r_M$ has countably resolvable singularities.
\end{prop}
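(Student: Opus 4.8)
The plan is to re-run the proof of Proposition~\ref{p-diffeo-resol} essentially word for word, this time also invoking part~\ref{i-sum} of Lemma~\ref{l-surgery}, which was not used there. Write $\cH=\mathcal D^r_M$ and let $\cH_\alpha$ denote the associated transfinite tower of groupoids. Before starting I would record the routine bookkeeping used throughout: the tower is increasing ($\cH_\gamma\subset\cH_\alpha$ for $\gamma<\alpha$), and a transfinite induction — using second countability of $M$ to pass from germ-wise to global countability of a singular set, together with the fact that adjoining one more singular point preserves countable singularity — shows $\cH_\alpha\subset\Omega\cH$ for every countable $\alpha$. In particular $F(\cH_\alpha)\subset\Omega\homeo_{\mathcal D^r}(M)$, the pairs occurring in Definition~\ref{d-finite-extension} are genuine singular pairs, and, since $M$ is compact, the compact set $K$ in Definition~\ref{d-extension} may be taken to be $M$.

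Fix a successor ordinal $\alpha=\beta+1$; the limit case is handled identically, replacing $\cH_\beta$ by $\bigcup_{\gamma<\alpha}\cH_\gamma$ throughout. Let $\gamma_1,\dots,\gamma_\ell\in\cH_\alpha=S\cH_\beta$ have distinct targets $x_i=t(\gamma_i)$. Choose a representative $(h_i,x_i)$ of $\gamma_i^{-1}$ defined on an open neighbourhood $U_i$ of $x_i$, with the $U_i$ pairwise disjoint; using $\gamma_i^{-1}\in S\cH_\beta$ we may shrink $U_i$ so that $[h_i]_y\in\cH_\beta$ for every $y\in U_i\setminus\{x_i\}$. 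By the previous paragraph each $h_i$ is then an $\Omega\mathcal D^r$-homeomorphism from $U_i$ onto an open subset of $M$. Now apply Lemma~\ref{l-surgery} with $\cG=\mathcal D^r$ to the data $(M;x_1,\dots,x_\ell;U_1,\dots,U_\ell;h_1,\dots,h_\ell)$, obtaining a differentiable manifold $N$, an $\Omega\mathcal D^r$-homeomorphism $\tau\colon M\to N$, open neighbourhoods $W_i\subset U_i$ of $x_i$, and $C^r$ open embeddings $\lambda_i\colon h_i(W_i)\hookrightarrow N$ such that $\tau|_{W_i}=\lambda_i\circ h_i|_{W_i}$ and the singular set of $\tau$ is contained in a compact subset of $W_1\cup\cdots\cup W_\ell$.

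Here is the one new point compared to Proposition~\ref{p-diffeo-resol}: by part~\ref{i-sum} of Lemma~\ref{l-surgery}, $N$ is diffeomorphic to a connected sum of $M$ with finitely many exotic $d$-spheres, and the hypothesis on $d$ forces each of these to be the standard sphere, so $N$ is diffeomorphic to $M$. Fix a diffeomorphism $\psi\colon N\to M$ and set $\varphi=\psi\circ\tau$, a self-homeomorphism of $M$. I would then check that $\varphi$ lies in $F(\cH_\alpha)$ and satisfies the two conclusions of Definition~\ref{d-extension}. Since $\psi$, and hence each $\psi\circ\lambda_i$, is $C^r$, the groupoid relations give $[\varphi]_y=[\psi\circ\lambda_i]_{h_i(y)}\,[h_i]_y$ for $y\in W_i$: this lies in $\cH\cdot\cH_\beta\subset\cH_\beta$ when $y\in W_i\setminus\{x_i\}$, and in $\cH\cdot\cH_\alpha\subset\cH_\alpha$ when $y=x_i$ (using $[h_i]_{x_i}=\gamma_i^{-1}\in\cH_\alpha$); off $W_1\cup\cdots\cup W_\ell$, the map $\tau$ has no singular points, so $[\varphi]_y\in\cH\subset\cH_\beta$ there. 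This proves $\varphi\in F(\cH_\alpha)$ and conclusion~\eqref{ii-resolvable}. For conclusion~\eqref{i-resolvable}, the same computation as in Proposition~\ref{p-diffeo-resol} gives
\[ [\varphi]_{x_i}\gamma_i=[\psi\circ\lambda_i]_{h_i(x_i)}\,[h_i]_{x_i}\,\gamma_i=[\psi\circ\lambda_i]_{h_i(x_i)}\,\gamma_i^{-1}\gamma_i=[\psi\circ\lambda_i]_{h_i(x_i)}\in\cH\subset\cH_\beta, \]
as required.

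I do not expect a serious obstacle: all the analytic and cut-and-paste content is already packaged in Lemma~\ref{l-surgery}, and the only genuinely new ingredient is the topological input that, under the dimension hypothesis, the connected summands produced by the surgery are trivial. The point requiring care is the groupoid bookkeeping — which groupoid a given germ belongs to, and over which manifold — and composing with $\psi$ at the very start, so that $\varphi$ is a self-homeomorphism of $M$ and every germ in sight lives over $M$, is precisely what reduces the argument to the routine verification above.
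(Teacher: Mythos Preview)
Your proof is correct and follows exactly the approach the paper indicates: it re-runs the argument of Proposition~\ref{p-diffeo-resol} while invoking part~\ref{i-sum} of Lemma~\ref{l-surgery}, composing with a diffeomorphism $\psi\colon N\to M$ so that the resulting $\varphi$ is a self-homeomorphism of $M$ and all germs live over $M$. The paper itself gives no details beyond the one-line remark that ``the same proof yields'' the result once part~\ref{i-sum} is taken into account, so you have faithfully (and carefully) spelled out what was intended, including the bookkeeping that $\cH_\alpha\subset\Omega\cH$ needed to feed into Lemma~\ref{l-surgery}.
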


\begin{proof}[Proof of Theorem \ref{t-main}]
Let $M$ be a closed manifold and  $\rho\colon \Gamma \to \Omega \mathsf{Diff}^r(M)$ be as in the statement, and assume that $\Gamma$ has no finite orbit. Since property $\fw$ passes to quotients, we can identify $\Gamma$ with its image. We identify $M$ with a subset of $\Xbb_d$, where $d$ is the dimension of $M$, and $\Gamma$ with a subgroup of $F(\Omega \mathcal D^r_d)$ supported on $M$. By Proposition~\ref{p-diffeo-resol}, we are  in position to apply Proposition~\ref{p-countable} and we obtain that either $\Gamma$ has a finite orbit in $M$, or that there exists an element $\varphi\in F(\Omega \mathcal D^r_d)$ such that $\varphi \Gamma \varphi^{-1}\subset F(\mathcal D^r_d)$. Note that the conjugate $\varphi \Gamma \varphi^{-1}$ is supported on the manifold $N=\varphi(M)$, which is the manifold in the statement. If the dimension $d$ is as in the last sentence, then we may repeat the same reasoning using the groupoid $\mathcal{D}^r_M$ by Proposition \ref{p-low-dim} and we obtain that $M=N$. 
\end{proof}

If we restrict the attention to actions with \emph{finitely many} singularities, the conclusion can be made more precise in the finite-orbit case.  Given a $\cG$-manifold $M$ we denote $S\homeo_\cG(M)=F(S\mathcal \cG_M)$ the group of finitely singular  $\cG$-homeomorphisms of $M$, i.e.~those that are $\cG$-homeomorphisms in restriction to the complement of a finite subset. It is naturally a subgroup of the topological full group $F(S\mathcal \cG_{\Xbb_d})$  (with the notations of the previous section). It follows from Proposition \ref{p-diffeo-resol} that  the pair $S\mathcal \cG_d\supset \mathcal \cG_d$ satisfies the requirements for Proposition~\ref{p-pair}. Using this, one obtains:

\begin{thm} \label{t-finitelymany}
	Let $M$ be a closed  $\cG$-manifold. Let $\Gamma$ be a finitely generated group with property $\fw$. 
For every homomorphism $\rho\colon \Gamma \to S \mathsf{Homeo}_\cG(M)$,  there exists a  closed $\cG$-manifold $N$ and a finitely singular $\cG$-homeomorphism $\varphi\colon M \to N$  such that the set of singular points of $\varphi \rho(\Gamma)\varphi^{-1}$ is finite and consists of points with a finite $\varphi \rho(\Gamma)\varphi^{-1}$-orbit.
\end{thm}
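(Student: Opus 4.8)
The plan is to mimic the proof of Theorem~\ref{t-main}, but to run the argument with the finite-singularity groupoids $\cG_M\subset S\cG_M$ and, more importantly, their ambient versions $\cG_{\Xbb_d}\subset S\cG_{\Xbb_d}$ over the space $\Xbb_d=\bigsqcup_{M\in\mathcal B_d}M$, so that the surgery step of Lemma~\ref{l-surgery} can be performed without leaving the world of $\cG$-manifolds of dimension $d$. First I would observe, as the excerpt already notes, that the pair $\cG_{\Xbb_d}\subset S\cG_{\Xbb_d}$ is a singular pair (this is immediate from the definition of $S\cH$ and the fact that $S\cG_{\Xbb_d}$ consists of germs of $\cG$-homeomorphisms off a single point), and that it has \emph{resolvable singularities}: this is exactly the successor step $\alpha=\beta+1$ with $\beta=0$, $\cH_0=\cG_{\Xbb_d}$, in the proof of Proposition~\ref{p-diffeo-resol}, and that proof goes through verbatim. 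Hence Proposition~\ref{p-pair} applies to the pair $\cG_{\Xbb_d}\subset S\cG_{\Xbb_d}$.

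Next I would set up the identification exactly as in the proof of Theorem~\ref{t-main}. Since property $\fw$ passes to quotients we may identify $\Gamma$ with its image $\rho(\Gamma)\subset S\homeo_\cG(M)=F(S\cG_M)$; embedding $M$ as one of the representatives in $\mathcal B_d\subset\Xbb_d$, we regard $\Gamma$ as a countable (in fact finitely generated) subgroup of $F(S\cG_{\Xbb_d})$ supported on $M$. By Lemma~\ref{l-co-discrete} each generator, hence each element, of $\Gamma$ has only finitely many singular points, so this is genuinely the finite-singularity situation. Applying Proposition~\ref{p-pair} to $\Gamma$ and the pair $\cG_{\Xbb_d}\subset S\cG_{\Xbb_d}$, I obtain an element $\varphi\in F(S\cG_{\Xbb_d})$ such that the set of singular points of $\varphi\Gamma\varphi^{-1}$ is finite and consists of points with finite $\varphi\Gamma\varphi^{-1}$-orbit. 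The group $\varphi\Gamma\varphi^{-1}$ is supported on the single $\cG$-manifold $N:=\varphi(M)\in\mathcal B_d$ (note $\varphi$ maps the connected components of $M$ to components of manifolds in $\mathcal B_d$, whose union is the required closed $\cG$-manifold $N$), and $\varphi$ restricted to $M$ is the desired finitely singular $\cG$-homeomorphism $M\to N$; here one uses, as in Theorem~\ref{t-main}, that the surgeries produced by Lemma~\ref{l-surgery} introduce only finitely many singularities, so that $\varphi|_M\in S\homeo_\cG(M)$ rather than merely in $\Omega\homeo_\cG(M)$. This is precisely the conclusion of Theorem~\ref{t-finitelymany}.

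The only real subtlety — and the step I expect to require the most care — is purely bookkeeping: checking that the element $\varphi$ delivered by Proposition~\ref{p-pair}, which a priori is an element of the full group $F(S\cG_{\Xbb_d})$ over the big disconnected space $\Xbb_d$, actually restricts on $M$ to a finitely singular $\cG$-homeomorphism onto a single closed $\cG$-manifold $N$, and that the finite singular set and finite-orbit conclusions transfer back from the ambient picture to $N$. This follows because $\varphi$ is compactly supported, hence supported on $M$ together with finitely many other manifolds in $\mathcal B_d$ paired up by the surgery construction of Proposition~\ref{p-diffeo-resol} (with $\cH_\beta=\cG_{\Xbb_d}$), so its restriction to $M$ lands in a finite subunion of $\Xbb_d$; collecting the target components into $N$ and restricting $\varphi$ gives the stated homeomorphism, and the singular-set and finite-orbit statements are exactly those provided by Proposition~\ref{p-pair} since conjugation by $\varphi$ preserves the support decomposition. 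Note that, unlike in Theorem~\ref{t-main}, there is no need here to separate a ``finite orbit'' alternative: the finite-orbit points are simply the (finitely many) remaining singular points, and the statement is uniform. If desired, when $\cG=\mathcal D^r$ and $d$ is such that $\mathbb S^d$ carries a unique differentiable structure, part~\ref{i-sum} of Lemma~\ref{l-surgery} together with Proposition~\ref{p-low-dim} allows one to take $N=M$, exactly as in the last sentence of Theorem~\ref{t-main}.
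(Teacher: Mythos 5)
Your proposal is correct and is essentially the paper's own argument: the paper likewise observes that Proposition~\ref{p-diffeo-resol} (its successor step with $\beta=0$) shows the pair $\cG_{\Xbb_d}\subset S\cG_{\Xbb_d}$ has resolvable singularities, and then applies Proposition~\ref{p-pair} directly to $\rho(\Gamma)\subset F(S\cG_{\Xbb_d})$ supported on $M\subset\Xbb_d$. The bookkeeping you flag (that $\varphi$ restricts on $M$ to a finitely singular $\cG$-homeomorphism onto a closed $\cG$-manifold $N=\varphi(M)$) is exactly the detail the paper leaves implicit, and your handling of it is fine.
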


Again, if $\cG=\mathcal D^r$ and if the dimension $d$ is such that $\mathbb{S}^d$ is the unique exotic $d$-sphere up to diffeomorphism, then we can choose $M=N$. 

Let us now explain the applications related to Zimmer's conjecture.
\begin{cor} \label{c-Zimmer-vol}
	Let $G$ be a connected Lie group of real rank $r\ge 2$, whose Lie algebra is simple and with finite centre. Let $\Gamma\subset G$ be a cocompact lattice, or $\Gamma=\mathsf{SL}(r+1,\Z)$.
	Let $M^d$ be a closed manifold of dimension $d$, and let $\rho:\Gamma\to \Omega\mathsf{Diff}^2(M)$. 
	\begin{enumerate}[(i)]
\item \label{i-Zimmer1} If $d<r$, then the action of $\rho(\Gamma)$ on $M$ has a finite orbit.
\item \label{i-Zimmer2} If there is a volume form $\omega$ on $M$ such that $\rho(\Gamma)$ preserves the measure associated to $\omega$, and if $d\le r$, then $\rho(\Gamma)$ has a finite orbit. 
	\end{enumerate}
\end{cor}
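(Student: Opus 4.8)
The plan is to deduce both statements from the solution of Zimmer's conjecture in the $C^2$ category, using the results of this section to replace a countably singular action by a genuine smooth action on a homeomorphic manifold. Throughout I will use that $\Gamma$ has Kazhdan's property $(T)$ --- by Kazhdan's theorem this holds for lattices in connected Lie groups whose Lie algebra is simple and with finite centre and real rank $r\ge 2$, as well as for $\Gamma=\mathsf{SL}(r+1,\Z)$ with $r\ge 2$ --- hence property $\fw$; being countable, $\Gamma$ is then finitely generated, and the same three facts pass to every finite-index subgroup of $\Gamma$.

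For part~\eqref{i-Zimmer1}, I would apply Theorem~\ref{t-countable} to $\rho\colon\Gamma\to\Omega\mathsf{Diff}^2(M)$. If its conclusion~\eqref{i-finite-orbit} holds we are done. Otherwise we obtain a closed differentiable manifold $N$, homeomorphic to $M$ and hence of dimension $d<r$, together with a countably singular diffeomorphism $\varphi\colon M\to N$ such that $\varphi\rho(\Gamma)\varphi^{-1}\subset\mathsf{Diff}^2(N)$. This is an honest $C^2$-action of $\Gamma$ on a closed manifold of dimension strictly below the real rank, so the resolution of Zimmer's conjecture by Brown--Fisher--Hurtado applies --- \cite{BFH} in the case $\Gamma=\mathsf{SL}(r+1,\Z)$, where the bound reads $\dim<r$, and \cite{BFH2} for cocompact lattices --- and yields that $\varphi\rho(\Gamma)\varphi^{-1}$ has finite image. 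In particular it has a finite orbit on $N$, and the $\varphi^{-1}$-image of this orbit is a finite $\rho(\Gamma)$-orbit on $M$.

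For part~\eqref{i-Zimmer2}, the existence of the volume form $\omega$ already forces $M$ to be orientable; I would fix an orientation and pass to the subgroup $\Gamma_0\le\Gamma$ of elements acting by orientation-preserving homeomorphisms, which has index at most $2$. Then $\rho(\Gamma_0)$ preserves $\omega$ itself, so $\rho|_{\Gamma_0}$ takes values in $\Omega\mathsf{Homeo}_\cG(M)$ for $\cG=\mathcal D^2_{\mathsf{Leb}}$, with the $\cG$-structure on $M$ determined by $\omega$ and the orientation. Running Theorem~\ref{t-main} with this $\cG$: either $\rho(\Gamma_0)$ has a finite orbit on $M$, or we obtain a closed $\mathcal D^2_{\mathsf{Leb}}$-manifold $N$ --- an oriented manifold with a volume form, of dimension $d\le r$ --- and an $\Omega\cG$-homeomorphism $\varphi\colon M\to N$ with $\varphi\rho(\Gamma_0)\varphi^{-1}\subset\mathsf{Homeo}_{\mathcal D^2_{\mathsf{Leb}}}(N)$, i.e.\ a volume-preserving $C^2$-action of $\Gamma_0$ on $N$. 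Since $\Gamma_0$ is again a lattice (respectively, a finite-index subgroup of $\mathsf{SL}(r+1,\Z)$), the volume-preserving case of Zimmer's conjecture \cite{BFH,BFH2}, which gains one dimension so that the bound becomes $\dim\le r$, forces $\varphi\rho(\Gamma_0)\varphi^{-1}$, and hence $\rho(\Gamma_0)$, to have finite image. In either case $\rho(\Gamma_0)$ has a finite orbit $O\subset M$, and then $\bigcup_{g\in\Gamma}\rho(g)(O)$ is a finite subset of $M$ --- a union of at most $[\Gamma:\Gamma_0]$ translates of $O$ --- which is $\rho(\Gamma)$-invariant, so $\rho(\Gamma)$ has a finite orbit.

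I do not expect a serious obstacle: the argument is a formal combination of Theorem~\ref{t-countable} (respectively Theorem~\ref{t-main}) with the Brown--Fisher--Hurtado theorems, together with the elementary passage to the index-two subgroup in the volume-preserving case. The one point needing a little care is the matching of the numerical thresholds --- the non-volume-preserving bound (dimension $\le r-1$) against the hypothesis $d<r$, and the volume-preserving bound (dimension $\le r$) against $d\le r$ --- which is exact for $\mathsf{SL}(r+1,\Z)$ and holds, with $r$ serving as a (generally non-sharp) substitute for the optimal constant, for cocompact lattices in the remaining cases.
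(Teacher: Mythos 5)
Your proposal is correct and follows essentially the same route as the paper, whose proof is a one-line combination of Theorem~\ref{t-main} (with $\cG=\mathcal D^2$, respectively $\cG=\mathcal D^2_{\mathsf{Leb}}$) with the Brown--Fisher--Hurtado theorems. Your extra care in part~\eqref{i-Zimmer2} --- passing to the index-two orientation-preserving subgroup so that the measure-preservation hypothesis upgrades to preservation of the form $\omega$, and then recovering a finite $\rho(\Gamma)$-orbit from a finite $\rho(\Gamma_0)$-orbit --- is a genuine detail that the paper's proof leaves implicit, but it does not change the structure of the argument.
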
 
\begin{rem}
Note that the condition that $\rho(\Gamma)$ preserve the measure associated to $\omega$ is equivalent to say that for every element $\gamma\in \Gamma$, the homeomorphism $\rho(\gamma)$ preserves the form $\omega$ outside its singular set (as the singular set of $\rho(\gamma)$ is countable, it has measure zero).
\end{rem}
\begin{proof}[Proof of Corollary \ref{c-Zimmer-vol}]
Both facts follow by combining the main results in \cite{BFH, BFH2} with  Theorem \ref{t-main}, applied to the groupoids $\cG=\mathcal D ^2$ and $\cG=\mathcal D^2_{\mathsf{Leb}}$ respectively. 
\end{proof}

\begin{rem}
	Relying on Zimmer's work \cite{zimmer_T}, we can also obtain rigidity results for (ergodic) volume-preserving actions of Kazhdan's groups on closed $\mathcal G$-manifolds, for \emph{finite type} (or \emph{rigid}) $\mathcal G$-structures, such as a pseudo-Riemannian metric. {Moreover, as in \cite{zimmer_T} or \cite{navas-cocycles}, it could be interesting to obtain a version of Theorem~\ref{t-main} for \emph{cocycles}. See also the discussion in \cite[\S\S4-6]{fisher_rigidity}.}
\end{rem}

 \section{Singular actions on the circle} \label{s:circle}
In this section  we analyse in more detail various classes of singular actions on the circle.

\subsection{Piecewise differentiable homeomorphisms}

\begin{dfn}\label{d-piecewise}
A homeomorphism $g\colon \T\to \T$ is said to be \emph{piecewise differentiable} of class $C^r$ if there exists a finite subset $\BP(g)\subset  \T$ such that $g$ is a $C^r$-diffeomorphism in restriction to $\T\setminus \BP(g)$, and for every point $x\in \BP(g)$ the right and left derivatives of $g$ exist up to order $\lfloor r\rfloor$ at $x$, and the derivative $D^{(\lfloor r \rfloor)}g$ is  $(r-\lfloor r \rfloor)$-H\"older continuous. The subset $\BP(g)$ will be called the set of \emph{breakpoints} of $g$. The group of all piecewise differentiable homeomorphisms of $\T$ of class $C^r$ is denoted by $\pa$.  \end{dfn}

In the following, we will simply denote by $\mathcal D^r$ the groupoid of diffeomorphic germs $\mathcal D^r_{\T}$ over $\T$, and by $\mathcal{PD}^r$ the groupoid of piecewise-$C^r$ germs over $\T$. The difference between $S\mathcal D^r$ and $\mathcal{PD}^r$ is that in the latter case we require that every germ is represented by a local homeomorphism having left and right first $\lfloor r\rfloor$ derivatives defined at every point of the domain of definition, and with the $\lfloor r\rfloor$-th left and right derivatives $(r-\lfloor r\rfloor)$-H\"older continuous. Observe that  $\mathcal {PD}^r\supset \mathcal D^r$ is a singular pair of groupoids, and it is not difficult to show that it has resolvable singularities.

We let $\mathcal{PD}^r_{+}$ be the   subgroupoid consisting of \emph{orientation preserving} germs. For every point $x\in \T$ we let $(\mathcal{PD}^r_{+})_x$ be the  \emph{isotropy group} at $x$ of this groupoid, which consists of all germs $\gamma \in \mathcal{PD}^r_{+}$ such that $s(\gamma)=t(\gamma)=x$. We first settle the following special case of Theorem \ref{t-piecewise}.

\begin{prop}\label{p:globalfixed}
	Let $\G$ be a countable property $\fw$ subgroup of $\mathsf{PDiff}^1(\T)$ that has a finite orbit. Then $\G$ is finite. 
\end{prop}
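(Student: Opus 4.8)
The plan is to reduce the statement to a classical result about groups of circle diffeomorphisms with a global fixed point, so that Thurston's stability theorem can be brought in. First I would observe that since $\Gamma$ has a finite orbit, a finite-index subgroup $\Gamma_0 \le \Gamma$ fixes a point $p \in \T$. Passing to a finite-index subgroup is harmless: property $\fw$ is inherited by finite-index subgroups, and $\Gamma$ is finite if and only if $\Gamma_0$ is finite. Cutting the circle open at $p$, the group $\Gamma_0$ becomes a group of orientation-preserving diffeomorphisms of a closed interval, i.e.\ a subgroup of $\mathsf{Diff}^1_+([0,1])$ (here using that elements of $\mathsf{PDiff}^1(\T)$ fixing $p$ are genuinely $C^1$ at $p$ on each side, hence give honest $C^1$ germs on the interval after cutting; this is the point of Remark on Thurston's stability holding in piecewise $C^1$ regularity).

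Next I would invoke Thurston's stability theorem: a finitely generated group acting faithfully by $C^1$ diffeomorphisms of $[0,1)$ (fixing $0$) admits a nontrivial homomorphism onto a subgroup of $(\R,+)$, obtained by taking a suitable limit of the ``linear parts'' $\log g'(0)$ or a higher-order analogue — unless the action is trivial near $0$. Actually the clean statement to use is: if $\Gamma_0$ acts faithfully and nontrivially by $C^1$ diffeomorphisms of a half-open interval with a fixed endpoint, then $\Gamma_0$ surjects onto $\Z$. But a group with property $\fw$ has property FA (Serre), hence has finite abelianization, so it admits no surjection onto $\Z$ and more generally no nontrivial homomorphism to $\R$. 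This forces the $C^1$ action of $\Gamma_0$ to be trivial in a neighborhood of $p$ on each side.

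From triviality near $p$ I would bootstrap to global triviality by a standard ``minimal set / propagation'' argument: the set of points of $\T$ near which $\Gamma_0$ acts trivially is open, nonempty, and $\Gamma_0$-invariant; if it is not all of $\T$ its boundary consists of points fixed by $\Gamma_0$, and repeating the Thurston-stability argument at such a boundary fixed point (again cutting and applying no-surjection-onto-$\R$) shows the trivial region is also closed, hence equals $\T$. Therefore $\Gamma_0$ acts trivially, so by faithfulness $\Gamma_0 = \{1\}$, whence $\Gamma$ is finite. The main obstacle I anticipate is the careful handling of the regularity at the fixed points when cutting the circle open: one must check that a piecewise-$C^1$ homeomorphism fixing $p$ restricts to genuine $C^1$ germs on each side at $p$ (so that Thurston stability, which needs true $C^1$ regularity, applies), and more delicately that at an interior boundary point of the trivial region the germs on both sides are $C^1$ — both of which hold because a breakpoint is only an issue for the *derivative's continuity across the point*, not for the existence of one-sided $C^1$ germs, and Thurston's argument is genuinely one-sided.
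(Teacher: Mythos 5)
Your proof is correct and follows essentially the same route as the paper's: pass to a finite-index orientation-preserving subgroup with a global fixed point, apply Thurston's stability theorem to the one-sided $C^1$ germ at a suitably chosen fixed point, and contradict property $\fw$ via the resulting nontrivial homomorphism to $\R$. Your explicit propagation argument over the boundary of the region of triviality is exactly what the paper compresses into its ``without loss of generality there exists $g$ not the identity on a right neighbourhood of $x$.''
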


\begin{proof}
Assume by contradiction that $\G$ is infinite, and note that it is finitely generated because of property $\fw$.  Since property $\fw$ passes to finite index subgroups, we can assume  that $\G$  preserves orientation and fixes a point $x\in \T$, and using that $\G$ is finitely generated, without loss of generality we can assume that there exists an element $g\in \G$ such that $g$ is not the identity in restriction to a right neighbourhood of $x$. Hence we obtain a non-trivial group homomorphism $\G\to( \mathcal{PD}^1_+)_{x}$. The image of this homomorphism is a non-trivial finitely generated subgroup of $( \mathcal{PD}^1_+)_{x}$, and therefore admits a non-trivial homomorphism to~$\R$ by the Thurston's stability theorem \cite{Th}. Hence so does~$\G$. This contradicts property $\fw$, since $\fw$ passes to quotients, and an abelian infinite  finitely generated group never has $\fw$. 
\end{proof}

\begin{thm}\label{t-piecewise}
Fix $r\in [1,\infty]$. Every countable subgroup of $\pa$ with property $\fw$  is conjugate in $\pa$ to a subgroup of  $\dg(\T)$.
\end{thm}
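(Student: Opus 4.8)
The plan is to deduce Theorem~\ref{t-piecewise} from the abstract machinery of Section~\ref{s:groupoids}, specifically from Proposition~\ref{p-pair} applied to the singular pair $\mathcal D^r \subset \mathcal{PD}^r$ of groupoids of germs over $\T$. First I would record the two structural facts about this pair: that $\mathcal D^r \subset \mathcal{PD}^r \subset S\mathcal D^r$ (so it is indeed a singular pair), and that it has \emph{resolvable singularities} in the sense of Definition~\ref{d-extension}. The latter is the only genuinely analytic point: given finitely many germs $\gamma_1,\dots,\gamma_\ell \in \mathcal{PD}^r$ with distinct targets $t(\gamma_i)$, one must produce $\varphi \in \mathsf{PDiff}^r(\T)$ which is a genuine $C^r$-diffeomorphism away from the $t(\gamma_i)$ and whose germ at each $t(\gamma_i)$ cancels the break of $\gamma_i$. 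Since $\T$ is compact one takes $K = \T$, and one builds $\varphi$ locally near each $t(\gamma_i)$ by a bump-function interpolation between the two one-sided $r$-jets prescribed by $\gamma_i^{-1}$ (straightening $\gamma_i$ on a small right-interval and a small left-interval and gluing via a $C^r$ partition of unity), extended by the identity elsewhere; for $r=\infty$ one uses a smooth bump. This is the step flagged in the excerpt as ``not difficult to show'', and it is the one I expect to be the main (though routine) obstacle.

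With resolvable singularities in hand, I would then apply Proposition~\ref{p-pair} to $\Gamma \subset F(\mathcal{PD}^r) = \mathsf{PDiff}^r(\T)$: since $\Gamma$ is countable with property $\fw$, there is $\varphi \in \mathsf{PDiff}^r(\T)$ such that the singular set of $\varphi\Gamma\varphi^{-1}$ is finite and consists of points with finite $\varphi\Gamma\varphi^{-1}$-orbit. Replacing $\Gamma$ by this conjugate (property $\fw$ is conjugacy-invariant), I may assume the set of breakpoints of all elements of $\Gamma$ is a finite $\Gamma$-invariant set $E$, and the goal becomes to show $E = \varnothing$, i.e.\ that $\Gamma \subset \mathsf{Diff}^r(\T)$. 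If $E = \varnothing$ we are done; otherwise, by passing to the finite-index subgroup fixing $E$ pointwise and preserving orientation (again using that $\fw$ passes to finite-index subgroups), I get a subgroup $\Gamma_0$ of finite index, still infinite and still with $\fw$, all of whose elements are $C^r$-diffeomorphisms except possibly with breaks at the finitely many fixed points $E$.

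The final step is to rule this out. This is where Proposition~\ref{p:globalfixed} enters: $\Gamma_0$ is a countable property-$\fw$ subgroup of $\mathsf{PDiff}^1(\T)$ with a finite orbit (namely $E$), so by Proposition~\ref{p:globalfixed} it is finite --- contradicting that $\Gamma$, hence $\Gamma_0$, is infinite. (If $\Gamma$ itself were finite there is nothing to prove, as a finite group of piecewise-$C^r$ homeomorphisms of $\T$ is conjugate into $\mathsf{SO}(2) \subset \mathsf{Diff}^r(\T)$ by averaging a metric, or simply because a finite group trivially has a finite orbit and one invokes the same argument.) Hence $E = \varnothing$ and the original conjugate of $\Gamma$ lies in $\mathsf{Diff}^r(\T)$, which is the assertion of the theorem. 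I should double-check one bookkeeping point: the conjugating element delivered by Proposition~\ref{p-pair} lies in $F(\mathcal{PD}^r) = \mathsf{PDiff}^r(\T)$, and passing to finite-index subgroups does not change the ambient conjugacy, so the final conjugacy is indeed realized inside $\mathsf{PDiff}^r(\T)$ as required.
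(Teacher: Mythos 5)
Your proposal is correct and follows essentially the same route as the paper: establish that the singular pair $\mathcal D^r\subset\mathcal{PD}^r$ has resolvable singularities, invoke Proposition~\ref{p-pair} to reduce to the case of a finite invariant singular set, and then use Proposition~\ref{p:globalfixed} to rule out an infinite group with a finite orbit. Your explicit handling of the case where $\Gamma$ is finite (and the redundant intermediate passage to the finite-index stabiliser of $E$, which Proposition~\ref{p:globalfixed} already absorbs) is slightly more careful than the paper's one-line dismissal, but it is the same argument.
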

\begin{proof}
Let $\Gamma\subset \pa$ be a subgroup with property FW. Since the singular pair $\mathcal{PD}^r\subset \mathcal D^r$ has resolvable singularities, we can invoke Proposition~\ref{p-pair} and claim that the action of $\Gamma$ on $\T$ has a finite orbit, or $\Gamma$ is conjugate in $F(S\mathcal D^r)=\pa$ to a subgroup of $F(\mathcal D^r)=\dg(\T)$. However, because of Proposition~\ref{p:globalfixed}, the former is possible only if $\Gamma$ is finite.
\end{proof}

\begin{proof}[Proof of Corollary~\ref{c-T}]
	Let $\G$ be a subgroup of $\mathsf{PDiff}^{3/2}(\T)$,  with property $(T)$ and hence property $\fw$. (We again identify $\Gamma$ with its image.) After Theorem~\ref{t-piecewise}, there exists a homeomorphism $\varphi:\T\to\T$ such that the conjugate $\varphi \G\varphi^{-1}$ is a group of $C^{r}$ circle diffeomorphisms. Navas's theorem (Theorem~\ref{t:navas}) gives that $\G$ is finite.
\end{proof}

\subsection{PL circle homeomorphisms}\label{s:PL}
\renewcommand{\cH}{\mathcal H}

In this subsection we will give an alternative and more elementary proof of Corollary~\ref{c-T} for groups of piecewise linear homeomorphisms, that we state below. For simplicity, we will always assume that homeomorphisms do preserve the orientation.

\begin{dfn}
A homeomorphism $h:\T\to\T$ is \emph{piecewise linear} if, when seeing $\T$ as the flat torus $\R/\Z$ (so with its quotient affine structure), for all but finitely many points $x\in \T$ there exists an open neighbourhood $I(x)$ such that the restriction $h\vert_{I(x)}$ is an affine map, that is of the form $y\mapsto ay+b$. A point $x\in \T$ where this condition is not verified is a \emph{breakpoint} of $h$. We write $\BP(h)$ for the set of breakpoints of $h$.
We denote by $\PL(\T)$ the group of piecewise linear homeomorphisms of $\T$.
\end{dfn}

\begin{thm}\label{t-PL}
	If $\Gamma$ is a countable Kazhdan group, every homeomorphism $\rho:\Gamma\to \PL(\T)$ has finite image.
\end{thm}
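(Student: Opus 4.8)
\textbf{Plan for the proof of Theorem~\ref{t-PL}.}
The idea is to set up an affine isometric action of $\PL(\T)$ on a Hilbert space and show that it has no fixed point whenever the PL group is infinite, so that any Kazhdan group mapping into $\PL(\T)$ must have finite image. The linear part of the action will be the permutation representation on $\ell^2(\T)$ coming from the natural action of $\PL(\T)$ on the circle (more precisely on the set of dyadic-like rationals, or really on all of $\T$, since the relevant cocycle is supported on a countable set). First I would recall H\"older's theorem: if an infinite subgroup $\G\subset\PL(\T)$ acts freely, then it is (topologically conjugate to) a group of rotations, hence abelian, hence cannot be infinite Kazhdan; so we may assume the action is not free, and in fact after passing to a finite-index orientation-preserving subgroup and using the dynamics of PL homeomorphisms one reduces either to a global fixed point situation (handled as in Proposition~\ref{p:globalfixed} via Thurston stability, since PL$^1$ $\subset$ PDiff$^1$) or to a situation where we can exploit the breakpoint cocycle directly.

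The key object is the cocycle measuring the failure of an element to be affine. For $g\in\PL(\T)$ and a point $x\in\T$, record the pair of one-sided slopes $(g'(x^-),g'(x^+))\in\R_{>0}^2$; the ``jump'' $\log g'(x^+)-\log g'(x^-)$ is nonzero exactly at the breakpoints of $g$. Packaging these jumps over all $x$ gives a finitely supported function $c(g)\colon\T\to\R$, and the chain rule $(fg)'(x)=f'(g(x))\,g'(x)$ shows that $c(fg)=c(f)\circ g^{-1}$-twisted plus $c(g)$, i.e. $c$ is a $1$-cocycle for the representation $\pi$ of $\G$ on $\ell^2(\T)$ permuting the delta functions (combined, if one prefers to keep track of signs and multiplicities at a fixed point, with the shift in the one-sided derivatives). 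Concretely I would define $b(g)=c(g)\in\ell^2(\T)$ and check $b(fg)=\pi(f)b(g)+b(f)$; this is the standard computation that appears in \cite{GS,minakawa,GL,Liousse}. The affine action is then $g\cdot v=\pi(g)v+b(g)$.

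Now the heart of the argument: this affine action has a fixed point if and only if the cocycle $b$ is a coboundary, i.e. there is $v\in\ell^2(\T)$ with $b(g)=\pi(g)v-v$ for all $g$. I would argue that for an infinite $\G\subset\PL(\T)$ this forces a contradiction. If $\G$ has no finite orbit on $\T$, then by Proposition~\ref{p-pair} applied to the singular pair $\mathcal{PD}^1\supset\mathcal D^1$ (PL is a special case of PDiff$^1$) $\G$ would be conjugate into $\Diff^1(\T)$, but a PL homeomorphism that is globally $C^1$ is affine, so $\G$ would be an infinite subgroup of $\Aff$-type acting on $\T$, i.e. abelian or solvable, contradicting Kazhdan. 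Hence $\G$ has a finite orbit; passing to finite index we get a global fixed point $p$, and then as in Proposition~\ref{p:globalfixed} the germ homomorphism $\G\to(\mathcal{PD}^1_+)_p$ followed by Thurston stability (or directly: for PL germs at a fixed point, the ``$\log$ slope on the right'' map $\G\to\R$ is a nonzero homomorphism once we pass to a further subgroup realizing a nontrivial germ) produces a nonzero homomorphism $\G\to\R$, contradicting that $\fw$ (implied by $(T)$) passes to quotients and kills infinite finitely generated abelian groups. Either way $\G$ is finite.

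\textbf{Main obstacle.} The delicate point is not the cocycle identity (that is a routine chain-rule computation) but confirming that the breakpoint data genuinely lands in $\ell^2$ and that the resulting affine action is \emph{not} almost-fixed/\emph{not} a coboundary in the cases we care about --- in other words, making sure the dichotomy ``free action (H\"older) vs. finite orbit (Thurston stability)'' is exhaustive and that in the finite-orbit branch the germ at the fixed point is nontrivial after passing to finite-index/quotient. I expect to spend most of the effort there, isolating exactly which classical input (H\"older's theorem on free actions, Thurston's stability theorem, and the Deroin--Kleptsyn--Navas / Ghys type control of PL dynamics) is invoked at each node of the dichotomy, and checking that each step is compatible with passing to finite-index subgroups and to quotients, under which property $\fw$ is preserved.
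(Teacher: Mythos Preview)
Your proposal sets up the right cocycle but then never uses it. The paper's proof of Theorem~\ref{t-PL} proceeds via a cleaner dichotomy: either the action of $\G$ on $\T$ is free, in which case H\"older's theorem makes $\G$ abelian; or some nontrivial $f\in\G$ has a fixed point, and then one shows \emph{directly} that the affine action $\rho$ has an unbounded orbit by estimating $\|\rho(f^n)\vec{0}\|$. The computation (Lemma~\ref{l:liousse}) is the heart of the argument you are missing: on a component $(x_0,x_1)$ of the support of $f$ where $f$ contracts, the forward orbit of any point hits each breakpoint of $f$ at most once, so the set of values $b(f^n,x)$ is uniformly finite; meanwhile the number of breakpoints of $f^n$ tends to infinity because $\log D^+f^n(x_0)\to -\infty$ and $\log D^-f^n(x_1)\to +\infty$ while each breakpoint changes $\log Df^n$ by a bounded amount. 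These two facts together force $\|\rho(f^n)\vec{0}\|^2\ge M_n\beta^2\to\infty$. There is no need for Proposition~\ref{p-pair}, Thurston stability, or any finite-orbit analysis.

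Your alternative route through Proposition~\ref{p-pair} also has a genuine gap in the ``no finite orbit'' branch. The conjugating element $\varphi$ produced by Proposition~\ref{p-pair} lies in $\mathsf{PDiff}^1(\T)$, not in $\PL(\T)$, so $\varphi\G\varphi^{-1}\subset\mathsf{Diff}^1(\T)$ consists of $C^1$ diffeomorphisms that have no reason to be piecewise linear. The implication ``PL and $C^1$ $\Rightarrow$ affine'' therefore does not apply to the conjugated group, and you cannot conclude it is abelian or solvable. To close that branch you would need Navas's theorem (and $r\ge 3/2$), at which point you have simply reproduced Corollary~\ref{c-T} rather than the intended elementary proof. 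Drop the FW machinery here and instead carry out the unboundedness computation for the cocycle.
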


We consider the Hilbert space $\cH=\ell^2(\T)$, that is, the space of square-summable families $(a_x)_{x\in \T}\subset \R$, indexed by points of $\T$ (note that this forces $a_x= 0$ for all but countably many indices $x\in \T$). If a group $\G$ acts on $\T$, the action induces an isometric action on $\cH$, just by permutation of the basis: $h\cdot (a_x)_{x\in \T}= \left (a_{h^{-1}(x)}\right )_{x\in \T}$. We write $\pi:\G\to \mathsf{O}(\cH)$ for this (left) representation.
Given an element $h\in \PL(\T)$, we write $D^-h$ and $D^+h$ for its left and right derivatives (which are well-defined at every point). The map
\begin{equation*}\label{eq:jump}
\dfcn{b}{\PL(\T)\times \T}{\R}{(h,x)}{\log \frac{D^+h(x)}{D^-h(x)}}
\end{equation*}
is a cocycle (i.e.~$b(gh,x)=b(g,h(x))+b(h,x)$), as one sees by applying the chain rule. Observe that we have $b(h^{-1},x)=-b(h,h^{-1}(x))$. As $h$ is a piecewise linear homeomorphism, for fixed $h$, the function $b(h,-)$ is zero at all but finitely many points (the breakpoints of $h$). This allows to define an isometric affine action $\rho:\G\to \Isom(\cH)$, with linear part~$\pi$ and translation part $b$:

\begin{lem}\label{l:action}
Let $\G$ be a subgroup of $\PL(\T)$.
The map $\rho$, defined for $h\in \G$ and $(a_x)\in \ell^2(\T)$ by
\begin{align*}
\rho(h)(a_x)_{x\in\T}:=&\,\pi(h)(a_x)_{x\in\T}+(b(h^{-1},x))_{x\in\T}\\
=&\,\left (a_{h^{-1}(x)}+b(h^{-1},x)\right )_{x\in \T},
\end{align*}
defines a homomorphism $\rho:\G\to \Isom(\cH)$.
\end{lem}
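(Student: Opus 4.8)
The statement to prove (Lemma~\ref{l:action}) is that $\rho$ is a homomorphism $\G\to\Isom(\cH)$. The plan is straightforward: this is a formal verification using the cocycle identity for $b$.

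\textbf{Setup and well-definedness.} First I would check that $\rho(h)$ actually lands in $\Isom(\cH)=\Isom(\ell^2(\T))$. Since $\pi(h)$ is a unitary (it permutes the orthonormal basis indexed by $\T$), $\rho(h)$ is an affine isometry as soon as the translation vector $(b(h^{-1},x))_{x\in\T}$ lies in $\ell^2(\T)$. This is where the hypothesis $\G\subset\PL(\T)$ is used: for a fixed piecewise linear $h$, the function $x\mapsto b(h^{-1},x)=\log\frac{D^+h^{-1}(x)}{D^-h^{-1}(x)}$ vanishes outside the finite set $\BP(h^{-1})$, hence defines a finitely supported, in particular $\ell^2$, vector. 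So each $\rho(h)$ is a genuine isometric affine transformation of $\cH$.

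\textbf{The homomorphism property.} Next I would verify $\rho(gh)=\rho(g)\rho(h)$ by applying both sides to an arbitrary vector $a=(a_x)_{x\in\T}$. On one hand,
\[
\rho(g)\rho(h)(a)=\pi(g)\bigl(\pi(h)a+(b(h^{-1},x))_x\bigr)+(b(g^{-1},x))_x=\pi(gh)a+\pi(g)(b(h^{-1},x))_x+(b(g^{-1},x))_x.
\]
The $x$-coordinate of $\pi(g)(b(h^{-1},x))_x$ is $b(h^{-1},g^{-1}(x))$, so the translation part of $\rho(g)\rho(h)$ at coordinate $x$ is $b(h^{-1},g^{-1}(x))+b(g^{-1},x)$. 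On the other hand, the translation part of $\rho(gh)$ at coordinate $x$ is $b((gh)^{-1},x)=b(h^{-1}g^{-1},x)$. The cocycle identity $b(uv,x)=b(u,v(x))+b(v,x)$ applied with $u=h^{-1}$, $v=g^{-1}$ gives exactly $b(h^{-1}g^{-1},x)=b(h^{-1},g^{-1}(x))+b(g^{-1},x)$, matching the two expressions. Since the linear parts also agree ($\pi(gh)=\pi(g)\pi(h)$, $\pi$ being a representation), this proves $\rho(gh)=\rho(g)\rho(h)$. One should also note $\rho(\id)=\id$, which is immediate since $b(\id,x)=0$.

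\textbf{Main obstacle.} There is no real obstacle here — the proof is a routine application of the chain rule (to get that $b$ is a cocycle, already recorded in the excerpt) and bookkeeping of how $\pi(g)$ acts on the translation vector. The only point requiring a moment's care is the index shuffle: tracking that the coordinate of $\pi(g)v$ at $x$ is $v_{g^{-1}(x)}$ rather than $v_{g(x)}$, and correspondingly that it is $b(h^{-1},\cdot)$ (the inverse) that must appear in the translation part for the cocycle identity to produce the inverse of $gh$ with the signs lining up. Getting these conventions consistent is the whole content of the verification.
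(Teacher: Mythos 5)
Your proof is correct and follows essentially the same route as the paper's: check that the translation vector $(b(h^{-1},x))_{x}$ is finitely supported (hence in $\ell^2$) so that $\rho(h)$ is a genuine affine isometry, then verify $\rho(gh)=\rho(g)\rho(h)$ by the cocycle identity applied to $u=h^{-1}$, $v=g^{-1}$. The index bookkeeping matches the paper's computation exactly.
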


\begin{proof}
One needs a few simple verifications.
First, we check that the map $\rho$ actually defines a homomorphism. This is guaranteed by the fact that $b$ is a cocycle (over $\pi$):
\begin{align*}
\rho(g)\rho(h)(a_x)_{x\in\T} & =\rho(g)\left (a_{h^{-1}(x)}+b(h^{-1},x)\right )_{x\in \T}\\
& = \left (a_{h^{-1}g^{-1}(x)}+b(h^{-1},g^{-1}(x))+b(g^{-1},x)\right )_{x\in \T}\\
& = \left (a_{(gh)^{-1}(x)}+b(h^{-1}g^{-1},x)\right )_{x\in \T}= \rho(gh)(a_x)_{x\in\T}.
\end{align*}
 Secondly, we have to verify that if a sequence $(a_x)_{x\in\T}$ is in $\cH$, then also its $\rho(h)$-image is. This is because for a fixed element $h\in\PL(\T)$, the cocycle $b(h^{-1},-)$ is zero at all but finitely many points. Finally, we have to verify that for fixed $h\in\PL(\T)$, the map $\rho(h)$ is an isometry. This is because $\rho(h)$ is the composition of the isometry $\pi(h)$ with the translation $b(h^{-1},-)$.
\end{proof}

\begin{lem}\label{l:liousse}
	Let $f\in \PL(\T)$ be a non-trivial element acting with a fixed point on $\T$, then $\rho(f):\cH\to\cH$ has an unbounded orbit.
\end{lem}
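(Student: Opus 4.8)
The plan is to exploit the fact that a nontrivial PL homeomorphism $f$ with a fixed point must have at least one point where the left and right derivatives disagree, i.e. a breakpoint, and to show that iterating $f$ forces the affine orbit of the origin in $\cH$ to grow without bound. More precisely, let $p\in\T$ be a fixed point of $f$. Passing to a power of $f$ if necessary (which does not affect boundedness of orbits, since a bounded orbit of $\rho(f)$ would give a bounded orbit of $\rho(f^n)$, and conversely an unbounded orbit of $\rho(f^n)$ witnesses an unbounded orbit of $\rho(f)$), and using H\"older's theorem on the structure of PL groups acting with a fixed point, I would first reduce to the case where $f$ has a fixed point and is nontrivial on a one-sided neighbourhood; then the derivative cocycle forces $f$ (or a power) to have a genuine breakpoint in the interior of its support, and more importantly the \emph{germ} at a boundary point of the support of $f$ is a nontrivial affine germ, or $f$ has a hyperbolic fixed point somewhere.

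The key computation is to evaluate $\rho(f^n)$ applied to the zero vector $0\in\cH$. By the cocycle identity one gets
\[
\rho(f^n)(0)=\bigl(b(f^{-n},x)\bigr)_{x\in\T},\qquad
b(f^{-n},x)=\sum_{k=0}^{n-1} b\bigl(f^{-1},f^{-k}(x)\bigr),
\]
so that $\|\rho(f^n)(0)\|^2=\sum_{x\in\T} b(f^{-n},x)^2$. The contribution at a single point $x$ in the orbit of a breakpoint telescopes: along the forward orbit of a breakpoint $q$ of $f$, the value $b(f^{-1},f^{-k}(x))$ is concentrated essentially at the points of the orbit $\{f^j(q)\}$, and at such a point $x=f^m(q)$ the quantity $b(f^{-n},x)$ records $\log$ of the ratio of one-sided derivatives of $f^{-n}$, which by the chain rule is a sum of the jumps $b(f^{-1},\cdot)$ picked up along the backward orbit. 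The plan is to pick $q$ to be a breakpoint of $f$ that is \emph{not} a periodic point (such exists: if all breakpoints were periodic, one could pass to a power fixing them all, and then near such a fixed breakpoint $f$ would be affine on each side with a genuine jump, contradicting that the fixed point is, after taking the power, topologically attracting/repelling in a way incompatible with a persistent nonzero jump — here I would invoke the standard argument that an element of $\PL$ fixing a point with a breakpoint there cannot have that breakpoint persist under all powers unless the two one-sided slopes are reciprocal, which is the degenerate case I would handle separately). Then the points $f^m(q)$, $m\in\Z$, are all distinct, and at each of them $f^{-1}$ is locally affine (for $m\ne$ the finitely many indices where another breakpoint sits), so $b(f^{-1},f^{m}(q))$ vanishes except near $m=0$; consequently the sum defining $b(f^{-n},f^m(q))$ stabilises to a fixed nonzero constant $c=b(f^{-1},q)\ne 0$ for all $n$ large and all $0\le m\le n-1$. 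This yields at least $n$ coordinates of $\rho(f^n)(0)$ equal to the fixed nonzero constant $c$, hence $\|\rho(f^n)(0)\|^2\ge n\,c^2\to\infty$.

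The main obstacle is the degenerate case where $f$ has a fixed point $p$ that is also its only breakpoint with left and right slopes reciprocal, say $D^-f(p)=\lambda$, $D^+f(p)=\lambda^{-1}$ with $\lambda\ne 1$: here the naive telescoping argument stalls because the orbit of the breakpoint is trivial. To handle this I would instead use that $p$ is then a topologically hyperbolic (north–south type on each side) fixed point, pick any other point $y$ near $p$, and track $b(f^{-n},y)$: as $y$ ranges over a small neighbourhood and $n$ grows, the orbit $f^{-n}(y)$ converges to $p$ from one side, so $b(f^{-n},y)$ picks up the single jump at $p$ each time the orbit crosses... actually since $y$ is not eventually at $p$, the cleaner route is to observe $b(f^{-n},p)=\log(D^+f^{-n}(p)/D^-f^{-n}(p))=\log(\lambda^{-n}/\lambda^{n})=-2n\log\lambda$, which alone is a single coordinate of $\rho(f^n)(0)$ of size $2n|\log\lambda|\to\infty$. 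In either case $\rho(f)$ has an unbounded orbit, which is what we wanted. I would present the reciprocal-slope computation as the base mechanism and the breakpoint-orbit telescoping as the generic mechanism, noting that every nontrivial $f$ with a fixed point falls into one of these by H\"older's theorem (the action on each component of the support is conjugate into $\Aff$ or has an element with a hyperbolic fixed point).
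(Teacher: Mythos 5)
Your basic computation agrees with the paper's: $\rho(f^n)\vec0=(b(f^{-n},x))_{x\in\T}$, and one wants either one coordinate growing linearly or linearly many coordinates bounded away from $0$. Your ``degenerate'' mechanism is correct (and more general than you state: any fixed point $p$ with $b(f,p)\neq0$ gives $b(f^{\pm n},p)=\pm n\,b(f,p)$). The genuine gap is in the generic mechanism. Picking a non-periodic breakpoint $q$, the cocycle identity gives
\[
b(f^{-n},f^m(q))=-\sum_{j\in J\cap[m-n,\,m-1]}b(f,f^j(q)),\qquad J=\{j\in\Z:\ f^j(q)\in\BP(f)\},
\]
and $J$ is finite but need not equal $\{0\}$: the orbit of $q$ may pass through several breakpoints of $f$, and their jumps may cancel. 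If $\sum_{j\in J}b(f,f^j(q))=0$ (e.g.\ breakpoints at $q$ and at $f^{-1}(q)$ with opposite jumps), then for all but boundedly many $m$ the window $[m-n,m-1]$ contains all of $J$ and the coordinate vanishes, so the orbit of $q$ contributes only a bounded amount to $\|\rho(f^n)\vec0\|^2$. Your claimed stabilised constant $b(f^{-1},q)$ is also misidentified (even absent cancellation it is $-b(f,q)$, and in general it is minus the total jump along the orbit); the nonvanishing of the single jump at $q$ does not imply the nonvanishing of the accumulated jump.

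The repair requires exactly the global input your sketch omits. On a component $I=(x_0,x_1)$ of the support on which $f$ contracts, $D^+f(x_0)<1<D^-f(x_1)$, so the jumps of $\log D^+f$ across $I$ sum to $\log D^-f(x_1)-\log D^+f(x_0)>0$; grouping the breakpoints in $I$ by $f$-orbits, some orbit carries a nonzero total jump, and choosing $q$ on that orbit makes your telescoping work. The paper instead sidesteps the choice of a good orbit: it shows the number $M_n$ of breakpoints of $f^n$ grows linearly because $\log Df^n$ must climb from $n\log D^+f(x_0)$ to $n\log D^-f(x_1)$ across $I$ while each jump of $f^n$ lies in a fixed finite set $S$ (this finiteness, which you also do not establish, is what yields the uniform lower bound $\beta=\min_{S\setminus\{0\}}|\sigma|$ on the surviving coordinates). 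Separately, the appeals to H\"older's theorem and to ``all breakpoints periodic gives a contradiction'' are not needed and not correct as stated; the existence of a suitable breakpoint again comes from the derivative variation across a support component.
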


\begin{proof} We reproduce an argument appearing in \cite[\S C]{LiousseFourier}.
		Let $I=(x_0,x_1)$ be a connected component of the open support $\{x\in\T\colon f(x)\neq x\}$ (possibly $x_0=x_1$).
	Without loss of generality, we can assume that $f$ is contracting on~$I$, i.e.~$f(y)<y$ for every $y\in I$.
	
	\begin{claim}
		The set of values $S=\{b(f^n,x)\colon n\in \N, x\in I\}$ is finite.
	\end{claim}
	\begin{proof}[Proof of claim]
	Choose points $x_0<z<x_1$ such that the restriction of $f$ to $I_0=(x_0,z)$ is linear.
	Given a point $x\in I$ and $n\in\N$, the logarithm of the jump of derivatives $b(f^n,x)$ can be written as the sum
	\[
	b(f^n,x)=\sum_{k=0}^{n-1}b(f,f^k(x)).
	\]
	Notice that only points $f^k(x)$ that are breakpoints of $f$ contribute to the sum above.
	As $f$ is a contraction, the sequence $\{f^k(x)\}_{k\in\N}$ is strictly decreasing to $x_0$, so the forward orbit of $x$ visits any breakpoint of $f$ at most once. Therefore $b(f^n,x)$ can only take values in the finite set $\left\{
	\sum_{c\in E}b(f,c)\right\}_{E\subset\BP(f)}$.
	\end{proof}
	
	
	{ \begin{claim}\label{c:bp}
			Let $M_n$ denote the number of breakpoints of $f^n$. Then $M_n$ goes to infinity as $n\to\infty$.
		\end{claim}
	\begin{proof}[Proof of claim]
		Recall that $f$ is contracting on $I=(x_0,x_1)$. Thus
		$\lim_{n\to+\infty} D^+f^n(x_0)=0$ and $\lim_{n\to+\infty}D^-f^n(x_1)=+\infty$.  As the derivative of $f^n$ can increase only when passing through breakpoints, and at breakpoints it changes by a uniformly bounded amount, we get easily the claim. Indeed, setting $c_0=\log D^+f(x_0)<0$, $c_1=\log D^-f(x_1)>0$ and $\mu=\max S$, one must have $M_n\ge n\frac{c_1-c_0}{\mu}$.
	\end{proof}	
	}
	
	 Take $\beta=\min_{\sigma\in S\setminus\{0\}}|\sigma|$. The image $\rho(f^{n})\vec0$ of the vector $\vec0=(0)_{x\in\T}$ is exactly the vector $\left (b(f^{-n},x)\right )_{x\in \T}$ whose squared $\ell^2$-norm is at least $M_n\beta^2$. As the sequence $M_n$ is unbounded, we get that the sequence $\|\rho(f^n)\vec0\|$ is unbounded.
\end{proof}

\begin{rem}
	In the proof of Claim \ref{c:bp}, the fact that elements are piecewise linear is fundamental for the proof. Indeed, every non-trivial piecewise linear homeomorphism of an interval has at least one breakpoint. This is no longer true, even for M\"obius transformations.
\end{rem}

We next recall a classical result due to H\"older \cite[Theorem 2.2.32]{navas-book}. For its statement, recall that a group action is \emph{free} if every point has trivial stabiliser.

\begin{thm}[H\"older]\label{t:Holder}
	Let $\G$ be a subgroup of $\Homeo_+(\T)$, whose action on $\T$ is free. Then~$\G$ is isomorphic to a group of rotations. More precisely, if $\Phi:\G\to \Phi(\G)\subset \SO(2)$ denotes the isomorphism, there exists a monotone continuous degree $1$ map $h:\T\to \T$ such that $hg=\Phi(g)h$ (i.e.~$\G$ is semi-conjugate to a group of rotations).
\end{thm}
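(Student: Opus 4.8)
The plan is to follow the classical argument, which proceeds by building a translation-number–like homomorphism to $\mathbb{R}$ using an invariant "order" on the group, and then identifying this with an invariant measure on $\T$. First, I would lift everything to the real line: choose a lift of the circle action to an action of a central extension $\widetilde{\G}$ on $\R$ by homeomorphisms commuting with the integer translation $T_1\colon x\mapsto x+1$, so that $\widetilde{\G}$ is generated by $\G$ together with $T_1$. The freeness hypothesis upstairs translates into the statement that every nontrivial element of $\widetilde{\G}$ moves every point of $\R$ (no fixed points on $\R$), hence acts "without crossings": for $g,h\in\widetilde{\G}$ one never has both $g(x)>x$ and $h(x)<x$ after composing suitably, which lets us define a left-invariant total order on $\widetilde{\G}$ by declaring $g>\id$ iff $g(x)>x$ for some (equivalently, all) $x$. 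The key structural fact is that this order is Archimedean: if $\id<g$ and $\id<h$ then some power $g^n$ overtakes $h$, because an element with no fixed point on $\R$ that commutes with $T_1$ must satisfy $g(x)\ge x+\delta$ for a uniform $\delta>0$ on a fundamental domain, so iterating escapes to $+\infty$ uniformly.

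Second, I would invoke (or reprove in one line) the classical theorem of Hölder that an Archimedean left-ordered group embeds order-preservingly into $(\R,+)$: fix any positive element $f_0$, and for $g\in\widetilde{\G}$ define $\Psi(g)=\lim_{n\to\infty}\frac{1}{n}p(n,g)$ where $p(n,g)$ is the unique integer with $f_0^{p}\le g^n<f_0^{p+1}$; subadditivity makes the limit exist, and Archimedeanity makes $\Psi$ injective and a homomorphism. Arranging $f_0=T_1$ and normalising $\Psi(T_1)=1$, we get an injective homomorphism $\Psi\colon\widetilde\G\to\R$; restricting to $\G$ gives an injective homomorphism into $\R/\Z\cong\SO(2)$, which is the claimed $\Phi$. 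Equivalently, and perhaps cleaner for producing the semiconjugacy $h$, one can build directly an invariant Radon measure on $\R$ with $\mu([0,1])=1$ by a Krylov–Bogolyubov / orbit-counting argument (count, for a fixed basepoint $x_0$, how many group-translates of $x_0$ fall in a given interval, renormalise); pushing this to the circle gives a finite invariant measure, and $h(x)=\mu([x_0,x])$ (reduced mod the total mass) is the desired monotone degree-one map, with $h g = \Phi(g) h$ following from invariance of $\mu$ because $h$ intertwines the $\G$-action with translations by the $\mu$-masses, which are exactly the rotation numbers $\Phi(g)$.

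Third, I would check the bookkeeping: $h$ is monotone and continuous because $\mu$ has no atoms (an atom at a point would be carried by freeness to infinitely many disjoint atoms of equal mass, contradicting finiteness) and full support is not needed — monotonicity and degree one suffice for "semiconjugate"; the intertwining relation $hg=\Phi(g)h$ is immediate from $h(g(x))-h(x)=\mu([x,g(x)])=\Phi(g)$ by invariance. Finally $\Phi$ is a homomorphism and is injective: injectivity is exactly Archimedeanity/freeness again, since $\Phi(g)=0$ forces $g$ to have a fixed point on $\R$ in every fundamental domain, hence (by freeness) $g=\id$.

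\textbf{Main obstacle.} The only genuinely delicate point is verifying that the order on $\widetilde\G$ is \emph{Archimedean} and that the resulting measure/homomorphism is well-defined and nonatomic — i.e.\ extracting the uniform displacement bound $g(x)\ge x+\delta$ from "no fixed points and commutes with $T_1$" via compactness of the fundamental domain, and then controlling the orbit-counting limit. Everything else (cocycle-free chain-rule bookkeeping, continuity of $h$, the homomorphism property of $\Phi$) is routine once that uniform escape estimate is in hand.
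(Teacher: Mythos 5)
The paper does not actually prove this statement — it is quoted as a classical result with a reference to Navas's book — and your argument is precisely the standard proof found there: lift to $\R$, observe that freeness yields a bi-invariant Archimedean order on $\widetilde{\G}$, apply H\"older's algebraic embedding theorem (equivalently, the translation number homomorphism), and build the semiconjugacy from an invariant measure. Your sketch is correct; the only points worth flagging are that the homomorphism property of $\Psi$ genuinely uses \emph{bi}-invariance of the order (which does hold here, since conjugating a fixed-point-free increasing map by an increasing map preserves positivity), and that your ``an atom would have infinitely many disjoint translates'' argument presumes the orbit of the atom is infinite, so the finite (hence cyclic, hence trivially conjugate to rotations) case should be dispatched separately.
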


We can now prove the main result of the subsection:
\begin{proof}[Proof of Theorem~\ref{t-PL}]
	As the quotient of a Kazhdan group is also a Kazhdan group, it is enough to prove that every countable subgroup of $\PL(\T)$ which has property $(T)$ is actually finite.
	So let $\G\subset \PL(\T)$ be an infinite countable subgroup. We consider the representation $\rho:\G\to\Isom(\cH)$ of Lemma~\ref{l:action}. 
	Even if property $(T)$ is not inherited by subgroups, we can however restrict to subgroups to show that a particular isometric affine action has unbounded orbits.
	If $\G$ contains a non-trivial element acting with a fixed point, then Lemma~\ref{l:liousse} implies that the representation $\rho$ has an unbounded orbit, disproving property $(T)$. Otherwise, Theorem~\ref{t:Holder} implies that $\G$ is isomorphic to a group of rotations (in fact, by Denjoy's theorem, which holds for piecewise linear homeomorphisms~\cite{herman}, it is topologically conjugate), which is abelian, and hence cannot have property~$(T)$.
\end{proof}

A direct application of the work of Minakawa on \emph{exotic circles} of $\PL(\T)$ \cite{minakawa} (not to be confused with exotic spheres in the sense of differentiable topology)  gives a more precise statement for Corollary~\ref{c-T} in the PL case. For this, we set some notations. A \emph{topological circle} in $\PL(\T)$ is a one-parameter subgroup $S=\{g_\alpha;\alpha\in \T\}\subset \PL(\T)$ which is topologically conjugate to the group of rotations $\SO(2)$. The topological circle is called \emph{exotic}, if the conjugating map cannot be taken in $\PL(\T)$. 
For $A>1$ one sets $I_A=[1/(A-1),A/(A-1)]$, which is an interval of length $1$, and defines
\[
\dfcn{h}{I_A}{[0,1]}{x}{\frac{\log (A-1)x}{\log A}}
\]
which naturally extends to a homeomorphism $\widetilde{h_A}$ commuting with the translation by $1$ and hence defines, by quotient, a homeomorphism $h_A$ of $\T$. Set $S_A= h_A^{-1}\SO(2)h_A$. When $A<1$, we set $S_A = \iota S_{A^{-1}} \iota$, where $\iota:\T\to \T$ is the order-reversing involution defined by $\iota(x)=-x$. These are exotic circles. For $A=1$, the notation $S_A$ will simply stay for  the standard $\SO(2)$. The circles $S_A$ are contained in $\PL(\T)$ (these examples had previously appeared in \cite{herman,boshernitzan}) and Minakawa shows that every topological circle in $\PL(\T)$ is PL conjugate to one of the exotic circles $S_A$ (and vice versa). More precisely, he proves that the only irrational rotations for which 1) the number of breakpoints of iterates is bounded and 2) the set of jumps is finite, are contained in a topological circle (see also \cite{Liousse}). Relying on this, one can prove:

\begin{thm}\label{t:boundedorbits}
	Let $\G$ be an infinite countable subgroup of $\PL(\T)$. The following statements are equivalent:
	\begin{enumerate}
		\item the affine isometric action $\rho:\G\to\Isom(\cH)$ defined in Lemma \ref{l:action} has bounded orbits;
		\item the subgroup $\G$ is PL conjugate into a topological circle of rotations $S_A$.
	\end{enumerate}
\end{thm}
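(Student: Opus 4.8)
The plan is to prove the two implications separately, using Minakawa's classification of topological circles in $\PL(\T)$ as the main external input. For the implication $(2)\Rightarrow(1)$: if $\G$ is PL conjugate into some $S_A$, then since $\rho$ is a genuine homomorphism $\G\to\Isom(\cH)$ (Lemma~\ref{l:action}), it suffices to check that the restriction of the corresponding affine action to the one-parameter group $S_A$ has bounded orbits — then a fortiori so does its restriction to $\G$, and boundedness of orbits is a conjugacy-invariant notion in the appropriate sense (one must be mildly careful: PL-conjugating $\G$ changes the cocycle $b$ by a coboundary-type term coming from the breakpoints of the conjugating element, which only shifts orbits by a bounded amount). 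So the real content is: for the exotic circle $S_A=h_A^{-1}\SO(2)h_A$, the functions $b(g_\alpha,-)$, as $\alpha$ ranges over $\T$, have uniformly bounded $\ell^2$-norm. This should follow from an explicit computation: the elements $g_\alpha\in S_A$ have a \emph{uniformly bounded number of breakpoints} (at most two, coming from the single breakpoint of $h_A$ on $\T$ and its $g_\alpha$-image) and the jump $b(g_\alpha,x)$ at each breakpoint is controlled by $\pm\log A$. Hence $\|\rho(g_\alpha)\vec 0\|^2$ is bounded by a constant depending only on $A$, and together with the displacement of an arbitrary basepoint one gets bounded orbits.

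For the implication $(1)\Rightarrow(2)$, I would argue by contraposition, splitting into two cases according to the dynamics of $\G$ on $\T$, exactly as in the proof of Theorem~\ref{t-PL}. If $\G$ contains a non-trivial element $f$ with a fixed point, then Lemma~\ref{l:liousse} already gives that $\rho(f)$ — hence $\rho|_\G$ — has an unbounded orbit, so $(1)$ fails; note such a $\G$ is certainly not conjugate into a topological circle, which acts freely, so there is nothing to prove. If instead $\G$ acts freely, then by Hölder's theorem (Theorem~\ref{t:Holder}) $\G$ is a group of rotations up to semiconjugacy, and by Denjoy's theorem for PL homeomorphisms (as used in the proof of Theorem~\ref{t-PL}) it is in fact topologically conjugate to a group of rotations; since $\G$ is infinite it contains an element topologically conjugate to an irrational rotation. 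Now apply the hypothesis $(1)$: bounded orbits of $\rho$ force, via the two claims in the proof of Lemma~\ref{l:liousse} read in reverse, that (a) the set of jumps $\{b(g,x):g\in\G,x\in\T\}$ is finite, and (b) the number of breakpoints of the elements of $\G$ is uniformly bounded (if some sequence $g_n\in\G$ had $M(g_n)\to\infty$ breakpoints all carrying a jump bounded below in absolute value by the minimal nonzero jump $\beta$, then $\|\rho(g_n)\vec 0\|^2\geq M(g_n)\beta^2\to\infty$). Applying this to (powers of) the element of $\G$ conjugate to an irrational rotation, Minakawa's characterization — that an irrational rotation with boundedly many breakpoints on iterates and finitely many jump values lies in a topological circle — places that element, and then by a further argument the whole group $\G$, inside one of the $S_A$ up to PL conjugacy.

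The main obstacle I expect is the last step: upgrading "$\G$ acts freely and satisfies the boundedness conditions (a)–(b)" to "$\G$ is PL-conjugate \emph{into} $S_A$". Minakawa's theorem is naturally stated for one-parameter subgroups (topological circles), so one needs to first produce a single exotic circle $S_A$ containing (a conjugate of) one well-chosen irrational-rotation element $f_0\in\G$, and then show every other element of $\G$ normalizes/lies in the same $S_A$. This should come from the fact that the PL conjugacy $h_A$ straightening $f_0$ to a rotation also straightens the whole group: an element commuting-up-to-semiconjugacy with an irrational rotation and preserving the relevant affine data must itself be of the form $h_A^{-1}(\text{rotation})h_A$. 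Concretely one argues that the centralizer of an irrational rotation in $\Homeo_+(\T)$ is $\SO(2)$, so after conjugating $f_0$ to a rotation the whole group $h_A\G h_A^{-1}$ lands in $\SO(2)$, and one checks this conjugacy can be taken PL on $\G$ by tracking breakpoints. I would also need to double-check the delicate point, flagged above, that PL-conjugation does not destroy boundedness — this is routine since a PL conjugating map $\psi$ changes $b(g,-)$ only by terms supported on the finite set $\BP(\psi)\cup g^{-1}\BP(\psi)$, contributing a uniformly bounded vector to each $\rho(g)\vec 0$.
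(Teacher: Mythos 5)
The paper never writes out a proof of Theorem~\ref{t:boundedorbits}; it only states it after recalling Minakawa's characterization, and your proposal fleshes out exactly the intended argument. In substance it is correct: $(2)\Rightarrow(1)$ follows from the uniform bound on elements of $S_A$ (at most two breakpoints, each with jump $\pm\log A$) together with the observation that PL conjugation by $\psi$ perturbs each vector $b(g,\cdot)$ only by the two fixed $\ell^2$ vectors built from $b(\psi,\cdot)$ and $b(\psi^{-1},\cdot)$; and $(1)\Rightarrow(2)$ follows from Lemma~\ref{l:liousse}, H\"older plus PL Denjoy, Minakawa applied to a single element $f_0$ of irrational rotation number, and the fact that the centralizer of an irrational rotation in $\Homeo_+(\T)$ is $\SO(2)$. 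On your last worry: since $\G$ is abelian by H\"older's theorem, that centralizer fact puts all of $\psi\G\psi^{-1}$ into $S_A$ at once, where $\psi$ is the PL conjugacy from Minakawa's classification taking the topological circle containing $f_0$ to $S_A$; no further ``tracking of breakpoints'' is needed.

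Two points need tightening. First, your assertion that an infinite, freely acting $\G$ ``contains an element topologically conjugate to an irrational rotation'' silently uses the discreteness hypothesis, which you never invoke: $\Q/\Z\subset\SO(2)\subset\PL(\T)$ is an infinite, freely acting subgroup all of whose elements have rational rotation number. One must note that an infinite torsion group of rotations has rotation numbers dense in $\T$ and hence accumulates at the identity, contradicting discreteness; so discreteness forces an element of infinite order, i.e.\ of irrational rotation number. Second, the finiteness of the set of jumps of the iterates $f_0^n$ (your condition (a)) is not a consequence of bounded orbits; it holds automatically, by the same computation as the first claim in the proof of Lemma~\ref{l:liousse}, using that the $f_0$-orbit of any point is free, so each breakpoint of $f_0$ is visited at most once by $x,f_0(x),\dots,f_0^{n-1}(x)$ and every jump of $f_0^n$ lies in the finite set of subset-sums of the jumps of $f_0$. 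You need this \emph{first}, in order to have the positive lower bound $\beta$ with which the $\ell^2$ bound yields the uniform bound on the number of breakpoints required by Minakawa. With these repairs the argument is complete.
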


\begin{rem}
	Exotic circles were one of the main reasons for suspecting that $\PL(\T)$ might have contained subgroups with property $(T)$ (see the discussion in \cite{BLT}).
	The statement in Theorem~\ref{t:boundedorbits} clearly gives a negative answer, and at the same time explains the role of exotic circles in this problem.
\end{rem}

\footnotesize{\paragraph*{Acknowledgements}
M.T.~thanks Christian Bonatti for the helpful discussion about germs of singular diffeomorphisms. N.M.B.~thanks Peter Feller for useful clarifications about exotic spheres. The authors are grateful to Yves de Cornulier for sharing his work \cite{CorPL} and for the several remarks.}
\begin{bibdiv}
\begin{biblist}

\bib{BFGM}{article}{
	author = {\scshape Bader, U.},
	author = {\scshape Furman, A.},
	author = {\scshape Gelander, T.},
	author = {\scshape Monod, N.},
	title = {Property $(T)$ and rigidity
		for actions on Banach spaces},
	journal = {Acta Math.},
	volume={198},
	year = {2007},
	pages = {57--105},	
}

\bib{BC}{article}{
	author= {\scshape Barnhill, A.},
	author = {\scshape Chatterji, I.},
	title={Property (T) versus Property FW, Section 5 in Guido's
book of conjectures. Collected by Indira Chatterji},
	journal={Enseign. Math. (2)},
	volume={54},
	year={2008},
	number={1--2},
	pages={3--189},
}

\bib{kazhdan}{book}{
	AUTHOR = {\scshape Bekka, B.},
	author={\scshape de la Harpe, P.},
	author={\scshape Valette, A.},
     TITLE = {Kazhdan's property ({T})},
    SERIES = {New Mathematical Monographs},
    VOLUME = {11},
 PUBLISHER = {Cambridge University Press, Cambridge},
      YEAR = {2008},
     PAGES = {xiv+472},
}

\bib{BLT}{article}{
	author = {\scshape Bonatti, C.},
	author = {\scshape Lodha, Y.},
	author = {\scshape Triestino, M.},
	title = {Hyperbolicity as an obstruction to smoothability for one-dimensional actions},
	journal = {Geom. Topol.},
	volume = {23},
	number = {4},
	pages = {1841--1876},
	year ={2019},
}

\bib{boshernitzan}{article}{
	author = {\scshape Boshernitzan, M.},
	title = {Dense orbits of rationals},
	journal={Proc. Amer. Math. Soc.},
	volume = {117},
	number = {4},
	year ={1993},
	pages = {1201\ndash 1203},	
}

\bib{DZ}{article}{
	author = {\scshape Brown, A.},
	author = {\scshape Damjanovic, D.},
	author = {\scshape Zhang, Z.},
	title = {$C^1$ actions on manifolds by lattices in Lie groups},
	note = {arXiv:1801.04009},
}

\bib{BFH}{article}{
	author = {\scshape Brown, A.},
	author = {\scshape Fisher, D.},
	author = {\scshape Hurtado, S.},
	title = {Zimmer's conjecture: Subexponential growth, measure rigidity, and strong property (T)},
	note = {arXiv:1608.04995},
}

\bib{BFH2}{article}{
	author = {\scshape Brown, A.},
	author = {\scshape Fisher, D.},
	author = {\scshape Hurtado, S.},
	title = {Zimmer's conjecture for actions of $\mathrm{SL}(m,\mathbb{Z})$},
	note = {arXiv:1710.02735},
}

\bib{burger-monod}{article}{
author={\scshape Burger, M.},
author={\scshape Monod, N.},
title={Bounded cohomology of lattices in higher rank Lie groups},
journal={J. Eur. Math. Soc. (JEMS)},
volume={1},
year={1999},
number={2},
pages={199\ndash 235}
}

\bib{Cantat}{article}{
	author = {\scshape Cantat, S.},
	title = {Progr\`es r\'ecents concernant le programme de Zimmer 
[d'apr\`es A.~Brown, D.~Fisher, et S.~Hurtado]},
	note = {S\'eminaire Bourbaki, 70\`eme ann\'ee, 2017-2018, no.~1136},
}

\bib{CanCor}{article}{
	author = {\scshape Cantat, S.},
	author = {\scshape de Cornulier, Y.},
	title = {Commensurating actions of birational groups and groups of pseudo-automorphisms},
	journal ={J. \'Ec. polytech. Math.},
	volume ={6},
	year ={2019},
	pages ={767--809},
}

\bib{Cor-FM}{article}{
    AUTHOR = {\scshape de Cornulier, Y.},
     TITLE = {Irreducible lattices, invariant means, and commensurating
              actions},
   JOURNAL = {Math. Z.},
    VOLUME = {279},
      YEAR = {2015},
    NUMBER = {1-2},
     PAGES = {1--26},
}

\bib{Cor-FWsurvey}{article}{
	AUTHOR = {\scshape de Cornulier, Y.},
	Title= {Group actions with commensurated subsets, wallings and cubings},
	note={arXiv:1302.5982},
}

\bib{CorPL}{article}{
		AUTHOR = {\scshape de Cornulier, Y.},
		title = {Commensurated actions for groups of piecewise continuous transformations},
		note = {arXiv:1803.08572},
}

\bib{DFG}{article}{
	author = {\scshape Dahmani, F.},
	author = {\scshape Fujiwara, K.},
	author = {\scshape Guirardel, V.},
	title = {Free groups of interval exchange transformations are rare},
	journal = {Groups Geom. Dyn.},
	volume = {7},
	year = {2013},
	number = {4},
	pages = {883--910},	
}

\bib{farley}{article}{
	author = {	\scshape Farley, D. S.},
	title = {A proper isometric action of Thompson group $V$ on Hilbert space},
	journal = {IMRN},
	volume = {45},
	year = {2003},
	pages = {2409--2414},
}

\bib{fisher_rigidity}{article}{
	author={\scshape Fisher, D.},
	title={Groups acting on manifolds: around the Zimmer program},
	conference={
		title={Geometry, Rigidity, and Group Actions},
	},
	book={
		series={Chicago Lectures in Math.},
		publisher={Univ. Chicago Press, Chicago, IL},
	},
	date={2011},
	pages={72--157},
}

\bib{FS}{article}{
	author = {\scshape Fisher, D.},
	author = {\scshape Silberman, L.},
	title = {Groups not acting on manifolds},
	journal = {IMRN},
	volume ={2008},
	number = {16},
	note={art.~ID rnn060},
}

\bib{ghys}{article}{
author={\scshape Ghys, \'E.},
title={Actions de r\'eseaux sur le cercle},
journal={Invent. Math.},
volume={137},
year={1999},
number={1},
pages={199\ndash 231}
}

\bib{GS}{article}{
author={\scshape Ghys, \'E.},
author={\scshape Sergiescu, V.},
title={Sur un groupe remarquable de diff\'eomorphismes du
cercle},
journal={Comment. Math. Helv.},
volume={62},
number={2},
year={1987},
pages={185\ndash 239}
}

\bib{GL}{article}{
	author = {\scshape Guelman, N.},
	author = {\scshape Liousse, I.},
	title = {Distortion in groups of affine interval exchange transformations},
	journal = {Groups Geom. Dyn.},
	volume = {13},
	number = {13},
	pages ={795--819},
	year = {2019},
}

\bib{Hall}{article}{
	AUTHOR= {\scshape Hall, G. R.},
	TITLE = {A $C^{\infty}$ Denjoy counterexample},
	JOURNAL = {Ergodic Theory Dynam. Systems},
	VOLUME = {1},
	YEAR = {1981},
	PAGES = {261\ndash 272},
}

\bib{herman}{article}{
	author = {\scshape Herman, M. R.},
	title = {Sur la conjugaison diff\'erentiable des diff\'eomorphismes du cercle \`a
		des rotations},
	journal = {Inst. Hautes \'Etudes Sci. Publ. Math},
	volume={49},
	year = {1979},
	pages = {5\ndash 233},	
}

\bib{Hill-Hop-Rav}{article}{
	AUTHOR = {\scshape Hill, M. A.},
	author={ \scshape Hopkins, M. J.},
	author={ \scshape Ravenel, D. C.},
	TITLE = {On the nonexistence of elements of {K}ervaire invariant one},
	JOURNAL = {Ann. of Math. (2)},
	VOLUME = {184},
	YEAR = {2016},
	NUMBER = {1},
	PAGES = {1--262},
	
}
\bib{Hirsch}{book} {
	AUTHOR = {\scshape Hirsch, M. W.},
	TITLE = {Differential topology},
	NOTE = {Graduate Texts in Mathematics, No. 33},
	PUBLISHER = {Springer-Verlag, New York-Heidelberg},
	YEAR = {1976},
	PAGES = {x+221},
	
}
\bib{hughes}{article}{
	author={\scshape Hughes, B.},
	title={Local similarities and the Haagerup property},
	note={With an appendix by D. S. Farley},
	journal={Groups Geom. Dyn.},
	volume={3},
	date={2009},
	number={2},
	pages={299--315},
}

\bib{JNS}{article} {
    AUTHOR = {\scshape Juschenko, K.},
    author = {\scshape Nekrashevych, V.},
    author = {\scshape de la Salle, M.},
     TITLE = {Extensions of amenable groups by recurrent groupoids},
   JOURNAL = {Invent. Math.},
    VOLUME = {206},
      YEAR = {2016},
    NUMBER = {3},
     PAGES = {837--867},
}

\bib{Kechris}{book} {
    AUTHOR = {\scshape Kechris, A. S.},
     TITLE = {Classical descriptive set theory},
    SERIES = {Graduate Texts in Mathematics},
    VOLUME = {156},
 PUBLISHER = {Springer-Verlag, New York},
      YEAR = {1995},
     PAGES = {xviii+402},
}

\bib{Ker-Mil}{article}{
    AUTHOR = {\scshape Kervaire, M. A.},
    author={ \scshape Milnor, J. W.},
     TITLE = {Groups of homotopy spheres. {I}},
   JOURNAL = {Ann. of Math. (2)},
    VOLUME = {77},
      YEAR = {1963},
     PAGES = {504--537},

}

\bib{KK}{article}{
	author={\scshape Kim, S.-h.},
	author={ \scshape Koberda, T.},
	title={Diffeomorphism groups of critical regularity},
        note={arXiv:1711.05589},
	
}

\bib{Liousse}{article}{
	author={\scshape Liousse, I.},
	title={PL Homeomorphisms of the circle which are piecewise $C^1$ conjugate to irrational
		rotations},
	journal={Bull. Braz. Math. Soc. (N.S.)},
	volume = {35},
	year = {2004},
	pages = {269--280},
}

\bib{LiousseFourier}{article}{
	author={\scshape Liousse, I.},
	title={Nombre de rotation, mesures invariantes et ratio set des hom\'eomorphismes
		affines par morceaux du cercle},
	journal={Ann. Inst. Fourier (Grenoble)},
	volume = {55},
	year = {2005},
	pages = {431--482},
}

\bib{LodhaSimple}{article}{
	author = {\scshape Lodha, Y.},
	title = {A finitely presented infinite simple group of homeomorphisms of the circle},
	journal = {J. London Math. Soc.},
	volume = {100},
	number = {3},
	year = {2019},
	pages = {1034--1064},
}

\bib{Mil-7}{article} {
    AUTHOR = {\scshape Milnor, J.},
     TITLE = {On manifolds homeomorphic to the {$7$}-sphere},
   JOURNAL = {Ann. of Math. (2)},
    VOLUME = {64},
      YEAR = {1956},
     PAGES = {399--405},

}
\bib{minakawa}{article}{
	author = {\scshape Minakawa, H.},
	title = {Classification of exotic circles in $PL(S^1)$},
	journal = {Hokkaido Math. J.},
	volume = {26},
	year = {1997},
	pages = {685--697},	
}

\bib{navas(T)}{article}{
	author={\scshape Navas, A.},
	title={Actions de groupes de Kazhdan sur le cercle},
	journal={Ann. Sci. \'Ecole Norm. Sup. (4)}, 
	volume={35},
	year={2002},
	pages={789\ndash 758}
}

\bib{navas-cocycles}{article}{
	author={\scshape Navas, A.},
	title={Reduction of cocycles and groups of diffeomorphisms of the circle},
	journal={Bull. Belg. Math. Soc. Simon Stevin}, 
	volume={13},
	number = {2},
	year={2006},
	pages={193\ndash 205}
}

\bib{navas-book}{book}{
	author={\scshape Navas, A.},
	title={Groups of circle diffeomorphisms},
	Year = {2011},
	series={Chicago Lectures in Math.},
	publisher={Univ. Chicago Press, Chicago, IL},
	Pages = {xviii+290},
}

\bib{navas-ICM}{article}{
	author={\scshape Navas, A.},
	title={Group actions on 1-manifolds: a list of very concrete open
		questions},
	conference={
		title={Proceedings of the International Congress of
			Mathematicians---Rio de Janeiro 2018. Vol. III. Invited lectures},
	},
	book={
		publisher={World Sci. Publ., Hackensack, NJ},
	},
	date={2018},
	pages={2035--2062},
}

\bib{Sergiescu}{article}{
	author = {\scshape Sergiescu, V.},
	title = {Versions combinatoires de $\mathrm{Diff}(S^1)$. Groupe de Thompson},
	journal = {Pr\'epublication de l'Institut Fourier},
	number = {630},
	year = {2003}
}

\bib{Th}{article}{
	author={\scshape Thurston, W. P.},
	title={A generalization of the Reeb stability theorem},
	journal={Topology},
	volume={13},
	year={1974},
	pages={347\ndash 352}
}
\bib{Th-book}{book} {
    AUTHOR = {\scshape Thurston, W. P.},
     TITLE = {Three-dimensional geometry and topology. {V}ol. 1},
    SERIES = {Princeton Mathematical Series},
    VOLUME = {35},
      NOTE = {Edited by S. Levy},
 PUBLISHER = {Princeton University Press, Princeton, NJ},
      YEAR = {1997},
     PAGES = {x+311}
}
\bib{Wang-Xu}{article} {
    AUTHOR = {\scshape Wang, G.},
    author={ \scshape Xu, Z.},
     TITLE = {The triviality of the 61-stem in the stable homotopy groups of
              spheres},
   JOURNAL = {Ann. of Math. (2)},
    VOLUME = {186},
      YEAR = {2017},
    NUMBER = {2},
     PAGES = {501--580}
}

\bib{Weinberger}{article}{
	author={\scshape Weinberger, S.},
	title={Some remarks inspired by the $C^0$ Zimmer program},
	conference={
		title={Geometry, Rigidity, and Group Actions},
	},
	book={
		series={Chicago Lectures in Math.},
		publisher={Univ. Chicago Press, Chicago, IL},
	},
	date={2011},
	pages={262--282},
}

\bib{Whit}{article}{
   author={\scshape Whitney, H.},
   title={Differentiable manifolds},
   journal={Ann. of Math. (2)},
   volume={37},
   date={1936},
   number={3},
   pages={645--680},
   issn={0003-486X},
}


\bib{witte}{article}{
author={\scshape Witte, D.},
title={Arithmetic groups of higher $\Q$-rank cannot act on $1$-manifolds},
journal={Proc. Amer. Math. Soc.},
volume={122},
year={1994},
number={2},
pages={333\ndash 340},
}

\bib{Ye1}{article}{
	author={\scshape Ye, S.},
	title={Low-dimensional representations of matrix groups and group actions
		on $\mathrm{CAT}(0)$ spaces and manifolds},
	journal={J. Algebra},
	volume={409},
	date={2014},
	pages={219--243},
}

\bib{Ye2}{article}{
	author={\scshape Ye, S.},
	title={Euler characteristics and actions of automorphism groups of free groups},
	journal={Algebr. Geom. Topol.},
	volume = {18},
	year = {2018},
	pages ={1195--1204}
}

\bib{Ye4}{article}{
	author={\scshape Ye, S.},
	title={Symmetries of flat manifolds, Jordan property and the general
		Zimmer program},
	journal = {J. London Math. Soc.},
	volume = {100},
	number = {3},
	year = {2019},
	pages = {1065--1080},
}

\bib{Ye3}{article}{
	author={\scshape Ye, S.},
	title={Piecewise linear actions and Zimmer's program},
	note = {arXiv:1301.6366},
}

\bib{yoccoz}{article}{
	author={\scshape Yoccoz, J.-C.},
	title={Il n'y a pas de contre-exemple de Denjoy analytique},
	language={French, with English summary},
	journal={C. R. Acad. Sci. Paris S\'{e}r. I Math.},
	volume={298},
	date={1984},
	number={7},
	pages={141--144},
}

\bib{Zim-program}{article}{    AUTHOR = {\scshape Zimmer, R. J.},
     TITLE = {Actions of semisimple groups and discrete subgroups},
 BOOKTITLE = {Proceedings of the {I}nternational {C}ongress of
              {M}athematicians, {V}ol. 1, 2 ({B}erkeley, {C}alif., 1986)},
     PAGES = {1247--1258},
 PUBLISHER = {Amer. Math. Soc., Providence, RI},
      YEAR = {1987},

}
\bib{zimmer_T}{article}{
	author = {\scshape Zimmer, R. J.},
	journal = {Invent. Math.},
	pages = {425--436},
	title = {Kazhdan groups acting on compact manifolds},
	volume = {75},
	year = {1984},	
}

\end{biblist}
\end{bibdiv}

\medskip

\noindent\textit{Yash Lodha\\
EPFL\\
SB MATH EGG\\
MA C3 584 (Batiment MA) Station 8\\
CH-1015 Lausanne\\
Switzerland\\}
\href{mailto:yash.lodha@epfl.ch}{yash.lodha@epfl.ch}

\medskip

\noindent\textit{Nicol\'as Matte Bon\\
CNRS\\
Institut Camille Jordan (ICJ, UMR CNRS 5208)\\
Universit\'e de Lyon\\
43 blvd.\ du 11 novembre 1918\\
69622 Villeurbanne\\
France\\}
\href{mailto:mattebon@math.univ-lyon1.fr}{mattebon@math.univ-lyon1.fr}

\medskip

\noindent\textit{Michele Triestino\\
Institut de Math\'ematiques de Bourgogne (IMB, UMR CNRS 5584)\\
Universit\'e Bourgogne Franche-Comt\'e\\
9 av.~Alain Savary\\
21000 Dijon\\
France\\}
\href{mailto:michele.triestino@u-bourgogne.fr}{michele.triestino@u-bourgogne.fr}

\end{document}